\numberwithin{equation}{section}
\theoremstyle{plain}
\tikzset{
  c/.style={every coordinate/.try}
}
\tikzstyle arrowstyle=[scale=1]
\tikzstyle directed=[postaction={decorate,decoration={markings,mark=at position 0.6 with {\arrow[arrowstyle]{stealth};}}}]
\tikzstyle reverse directed=[postaction={decorate,decoration={markings,mark=at position 0.4 with {\arrowreversed[arrowstyle]{stealth};}}}]
\tikzstyle dot=[style={circle,inner sep=1pt,fill}]
\newtheorem{theorem}{Theorem}[section]
\newtheorem{lemma}[theorem]{Lemma}
\newtheorem{corollary}[theorem]{Corollary}
\newtheorem{proposition}[theorem]{Proposition}
\newtheorem{Obser}[theorem]{Observation}
\theoremstyle{definition}
\newtheorem{example}[theorem]{Example}
\newtheorem{conj}[theorem]{Conjecture}
\newtheorem{remark}[theorem]{Remark}
\newtheorem{?}[theorem]{Problem}
\def\Ides{\mathrm{Ides}}
\def\ides{\mathrm{ides}}
\def\id{\mathrm{id}}
\def\Vlrmax{\mathrm{Lrmax}}
\def\Vlrmin{\mathrm{Lrmin}}
\def\Vrlmax{\mathrm{Rlmax}}
\def\Des{\mathrm{Des}}
\def\St{\mathrm{St}}
\def\st{\mathrm{st}}
\def\SS{\mathfrak{S}}
\def\lrmax{\mathrm{lrmax}}
\def\lrmin{\mathrm{lrmin}}
\def\rlmax{\mathrm{rlmax}}
\def\rlmin{\mathrm{rlmin}}
\def\iar{\mathrm{iar}}
\def\Iar{\mathrm{Iar}}
\def\W{\mathcal{W}}
\def\satu{\mathrm{satu}}
\def\exc{\mathrm{exc}}
\def\lmaxz{\mathrm{lmaxz}}
\def\dist{\mathrm{dist}}
\def\zero{\mathrm{zero}}
\def\satu{\mathrm{satu}}
\def\rep{\mathrm{rep}}
\def\iasc{\mathrm{iasc}}
\def\st{\mathrm{st}}
\def\St{\mathrm{St}}
\def\asc{\mathrm{asc}}
\def\Des{\mathrm{Des}}
\def\des{\mathrm{des}}
\def\max{\mathrm{max}}
\def\Pk{\mathrm{Pk}}
\def\I{{\bf I}}
\def\N{\mathbb{N}}
\def\Br{\mathrm{Br}}
\def\br{\mathrm{br}}
\def\B{\mathfrak{B}}
\def\s{{\bf s}}
\def\boxit#1{\leavevmode\hbox{\vrule\vtop{\vbox{\kern.33333pt\hrule
    \kern1pt\hbox{\kern1pt\vbox{#1}\kern1pt}}\kern1pt\hrule}\vrule}}
\begin{document}

\title[A bijection for  patterns of length $5$]{A bijection for  length-$5$ patterns  in permutations}

\author[J.N. Chen]{Joanna N. Chen}
\address[Joanna N. Chen]{College of Science, Tianjin University of Technology, Tianjin 300384, P.R. China}
\email{joannachen@tjut.edu.cn}

\author[Z. Lin]{Zhicong Lin}
\address[Zhicong Lin]{Research Center for Mathematics and Interdisciplinary Sciences, Shandong University, Qingdao 266237, P.R. China}
\email{linz@sdu.edu.cn}

\date{\today}
\begin{abstract}
A bijection between $(31245,32145,31254,32154)$-avoiding permutations  and $(31425,32415,31524,32514)$-avoiding permutations is constructed, which preserves five classical set-valued statistics. Combining with two codings of permutations due respectively to Baril--Vajnovszki and Martinez--Savage proves an enumerative conjecture posed by Gao and Kitaev. Moreover, the generating function for the common counting sequence is proved to be algebraic.
\end{abstract}
\keywords{}
\maketitle
%

\section{Introduction}\label{sec:intro}
Given  two words $P=p_1 p_2 \cdots p_k$ and $W=w_1w_2\ldots w_n$ over $\N$, where $k\leq n$, we say
 that $W$ contains the \emph{pattern} $P$  if there exists integers
 $i_1 < i_2 < \cdots < i_k$ such that
 $w_{i_1}w_{i_2} \cdots w_{i_k}$  is order isomorphic to $P$. Otherwise, we say that $W$  \emph{avoids} $P$, or  $W$ is \emph{$P$-avoiding}. For a set of words $\mathcal{W}$, the set of words in $\mathcal{W}$ avoiding patterns $P_1,\ldots,P_r$ is denoted by $\mathcal{W}(P_1,\ldots,P_r)$.


Let $\SS_n$ be the set of all
permutations of $[n]:=\{1,2,\cdots,n\}$. Permutations are viewed as words and the study of patterns in permutations and words from the enumerative aspect can be traced back to the work of MacMahon~\cite{Mac1915}. For over a half century, this theme of research  has been the focus  in enumerative and bijective combinatorics (see Kitaev's monograph~\cite{Kit2011}).
This paper is motivated by an enumerative conjecture posed by Gao and Kitaev~\cite[Table~5]{gk}, which in the language of pattern avoidance asserts that
\begin{conj}[Gao--Kitaev]
For $n\geq1$,
$$
|\SS_n(45312,45321,54312,54321)|=|\SS_n(31245,32145,31254,32154)|.
$$
\end{conj}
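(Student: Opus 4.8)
The plan is to first replace each family of four length-$5$ patterns by a single, more transparent forbidden configuration. A subsequence $\sigma_{i_1}\sigma_{i_2}\sigma_{i_3}\sigma_{i_4}\sigma_{i_5}$ (with $i_1<i_2<\cdots<i_5$) is order isomorphic to one of $45312,45321,54312,54321$ precisely when its two leading entries are the largest two of the five, its middle entry is the median, and its two trailing entries are the smallest two; equivalently, when
$$\min(\sigma_{i_1},\sigma_{i_2})>\sigma_{i_3}>\max(\sigma_{i_4},\sigma_{i_5}).$$
Hence $\sigma\in\SS_n(45312,45321,54312,54321)$ iff $\sigma$ has no five positions realizing the height profile ``high, high, median, low, low''. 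The same bookkeeping shows that $\sigma\in\SS_n(31245,32145,31254,32154)$ iff $\sigma$ has no five positions $i_1<\cdots<i_5$ with
$$\max(\sigma_{i_2},\sigma_{i_3})<\sigma_{i_1}<\min(\sigma_{i_4},\sigma_{i_5}),$$
that is, no profile ``median, low, low, high, high''. A quick check of the eight dihedral symmetries (reverse, complement, inverse and their products) shows that none of them carries the first profile to the second---the median sits in the centre of the first and at the front of the second---so the conjecture cannot be reduced to a symmetry and a genuine argument is needed.

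With the two classes described by single profiles, I would next look for a coding of permutations that linearises these length-$5$ conditions into short patterns in an auxiliary sequence. The natural candidates are inversion-sequence type codes, for which Martinez and Savage have enumerated the classes avoiding a triple of binary relations. The expectation is that, under a suitable code, the ``high, high, median, low, low'' condition becomes the avoidance of a single length-$3$ relational pattern in the inversion sequence, and similarly for the ``median, low, low, high, high'' condition under a possibly different code. Concretely, I would use the Baril--Vajnovszki coding on one side and the Martinez--Savage coding on the other, arranged so that both permutation classes are carried bijectively onto one and the same class of inversion sequences, whence equinumeration. The role of the intermediate patterns $31425,32415,31524,32514$ is to supply a class that is simultaneously easy to reach from $\SS_n(31245,32145,31254,32154)$ by a direct map and easy to feed into one coding, and preserving the five classical set-valued statistics is exactly what makes the two codes compatible along this bridge.

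Finally, to obtain algebraicity of the common counting series I would set up a functional equation for the resulting inversion-sequence class using one catalytic variable (for instance the last entry, or the running maximum), either through a generating-tree / succession-rule description or directly from the recursive structure of the code, and then extract the series by the kernel method; algebraicity of the solution is typical for such one-catalytic-variable equations once the kernel factors appropriately.

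I expect the main obstacle to be the middle step. The two forbidden profiles are genuinely different---the median is central in one and leading in the other---so the two codings are different maps, and the crux is to prove that they nevertheless have the same image. Equivalently, the real work is to construct the explicit statistic-preserving bijection to the intermediate class, to verify that it is well defined, and to check that it sends occurrences of the forbidden profile to occurrences of the corresponding pattern and nothing more. Establishing that this bijection exactly preserves all five set-valued statistics, so that the two codings glue into a single enumeration, is where I anticipate the difficulty to concentrate.
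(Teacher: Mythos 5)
Your plan coincides with the paper's actual route almost exactly: the paper proves the conjecture by chaining Martinez--Savage's coding $\phi$ (which carries $\SS_n(45312,45321,54312,54321)$ onto $\I_n(201,210)$), the Baril--Vajnovszki $b$-code (which carries $\SS_n(24135,24153,42135,42153)$ onto the same class $\I_n(201,210)$), the group inverse $\pi\mapsto\pi^{-1}$ (which takes that class to $\SS_n(31425,32415,31524,32514)$), and finally a direct bijection $\alpha$ from $\SS_n(31245,32145,31254,32154)$ to this intermediate class. Your profile reformulation of the two quadruples is correct, your observation that no dihedral symmetry relates them matches the paper's remark, and your identification of where the difficulty concentrates is accurate.

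However, what you have written is a strategy, not a proof, and the gap is precisely the step you yourself flag: the bijection $\alpha$ is never constructed. In the paper this is the entire content of Sections~2 and~3 --- one first builds an auxiliary bijection $\varphi:\SS_n(3124,3214)\to\SS_n(3142,3241)$ via two explicit block (``floor'') decompositions, proves it preserves $(\Br,\Ides,\Vlrmax,\Vlrmin,\Iar)$, and then uses $\varphi$ inside a recursive, multi-case definition of $\alpha$ governed by the positions of the last two left-to-right maxima relative to the second right-to-left maximum; well-definedness, injectivity (via an explicit inverse $\beta$), and statistic preservation each require a separate structural lemma and a long case analysis. Nothing in your proposal indicates how this map would be defined, and there is no reason a priori that a statistic-preserving bijection between the two quintuple-refined classes exists at all --- that is a theorem, not a formality. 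A secondary, smaller point: for the bare cardinality statement you only need $\alpha$ to be a bijection; the preservation of the five set-valued statistics is what yields the \emph{refinement} (the identity of $\lrmin$ versus $\rlmax$ generating polynomials), not what makes the two codings ``compatible.'' The two codings are glued simply because both have image $\I_n(201,210)$. So the verdict is: right roadmap, but the central construction that makes the roadmap into a proof is missing.
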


The main objective of this paper is to construct a bijection preserving  five  set-valued statistics between $\SS_n(31245,32145,31254,32154)$ and $\SS_n(31425,32415,31524,32514)$, which together with the works in~\cite{ms} and~\cite{CL} implies a refinement of Gao--Kitaev's conjecture (see Proposition~\ref{refine:GK}). In order to state our main result, we still need some further notations and definitions.

Given  $\pi=\pi_1 \pi_2 \cdots \pi_n \in \SS_n$,
we say that $i$ is a \emph{descent} of $\pi$ if $\pi_i> \pi_{i+1}$.
Define the \emph{descent set} $\Des(\pi)$  to be the set of all descents of $\pi$
and $\Ides(\pi)$ to be the \emph{inverse descent set} of $\pi$ as
\begin{equation*}
  \Ides(\pi)=\{i \in [n-1] \colon \pi^{-1}(i)>\pi^{-1}(i+1)\}.
\end{equation*}
If $\pi$ has $k$ descents, then $\pi$ is the union of $k+1$ maximal increasing subsequences of consecutive entries.  These
are called the \emph{ascending runs} of $\pi$.
We denote by $\Iar(\pi)$  the set of elements in the  {\em initial ascending run} of $\pi$.
A letter $\pi_i$ is said to be a \emph{left-to-right maximum} of $\pi$ if
$\pi_i> \pi_j$ for all $j<i$. Denote $\Vlrmax(\pi)$ the set of
all  left-to-right maxima of $\pi$. Similarly,
we may define  the \emph{left-to-right minimum} and the \emph{right-to-left maximum} of $\pi$ and let $\Vlrmin(\pi)$ and $\Vrlmax(\pi)$  be the corresponding sets.

Throughout the paper, we make the convention that if ``$\St$'' is
a set-valued statistic, then ``$\st$'' is the corresponding numerical statistic.
For example, $\des(\pi)$ denotes the number of descents of $\pi$. Note that ``$\des$'' and ``$\ides$'' are known as {\em Eulerian} statistics over permutations, while ``$\lrmax$'', ``$\lrmin$'', ``$\rlmax$'' are {\em Stirling} statistics. These classical statistics have been investigated  over several pattern avoidance classes of permutations~\cite{blo,CN,kl}.

\begin{theorem}\label{thm:fiveplus}
There exists a bijection $$\alpha: \SS_n(31245,32145,31254,32154)\rightarrow\SS_n(31425,32415,31524,32514)$$ that preserves the quintuple of set-valued statistics $(\Ides,\Vlrmax,\Vlrmin,\Vrlmax,\Iar)$.
\end{theorem}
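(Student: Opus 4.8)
The plan is to build $\alpha$ as an explicit, locally defined rewiring, after first recasting both avoidance conditions in a uniform and readable form. \textbf{Reformulation.} Reading each family of four patterns as a single constraint, one checks that $\{31245,32145,31254,32154\}$ consists of exactly the length-$5$ patterns of the shape ``median, then the two smaller values in either order, then the two larger values in either order''; hence $\pi$ avoids this family if and only if no entry $\pi_i=v$ has, in the suffix $\pi_{i+1}\cdots\pi_n$, two entries below $v$ occurring before two entries above $v$. Writing $B$ for ``below $v$'' and $A$ for ``above $v$'', this is precisely the absence of a $BBAA$ subsequence relative to every reference entry. The same bookkeeping shows that $\{31425,32415,31524,32514\}$ is the family ``median, then below, above, below, above'', so its avoidance is the absence of a $BABA$ subsequence relative to every reference entry. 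Thus the theorem becomes a statistic-preserving bijection between $BBAA$-type avoiders and $BABA$-type avoiders.

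\textbf{A rigid skeleton.} The five preserved statistics are very restrictive and will dictate the shape of $\alpha$. Since $\Iar(\pi)$ is an increasing set of values occupying an initial segment, preserving it forces $\alpha$ to fix the initial ascending run pointwise; similarly the value-sets $\Vlrmax$, $\Vlrmin$, $\Vrlmax$ constrain the placement of the record entries, and preserving $\Ides=\Des(\pi^{-1})$ constrains the relative horizontal order of the positions carrying consecutive values. I would first isolate, from these constraints, a ``skeleton'' of entries that $\alpha$ must leave in place, and then show that the only remaining freedom lies in reordering a restricted collection of interior entries.

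\textbf{The map and its verification.} On those interior entries I would define $\alpha$ recursively: peel off a distinguished entry (the first entry, or the position carrying a chosen extremal value), apply $\alpha$ to the reduced permutation, and reinsert so that each local $BBAA$-hazard in the domain is traded for the corresponding $BABA$-hazard in the codomain; running the peeling backwards yields $\alpha^{-1}$. Two verifications then remain, both by induction on the recursion: that $\alpha(\pi)$ lands in the target class (equivalently, is $BABA$-avoiding) and $\alpha^{-1}$ in the source class, and that each of $\Ides,\Vlrmax,\Vlrmin,\Vrlmax,\Iar$ is unchanged at every step. \emph{The hard part} will be this last verification: because $BBAA$ and $BABA$ differ only in how two low and two high entries interleave relative to a \emph{moving} reference value, a local rearrangement that repairs one reference's configuration can inadvertently create a forbidden configuration for a different reference. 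Controlling this interaction across all reference entries simultaneously, while keeping the four extremal value-sets and the inverse descent set frozen, is the crux of the argument.
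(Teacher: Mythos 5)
Your reformulation of the two pattern families as ``$BBAA$'' versus ``$BABA$'' relative to every reference entry is correct and is a clean way to state what must be proved, and your observations about the rigidity imposed by the statistics (that $\Iar$ forces the initial ascending run to be fixed pointwise, that preserving $\Ides$ amounts to preserving the relative positions of numerically adjacent values) are accurate and do echo how the paper verifies statistic preservation. But there is a genuine gap: you never define the bijection. The operative sentence --- ``reinsert so that each local $BBAA$-hazard in the domain is traded for the corresponding $BABA$-hazard in the codomain'' --- is a restatement of the goal, not a rule; it does not say which entry is peeled off in which situation, nor where it is reinserted, and there is no argument that such a reinsertion position exists, is unique, or is recoverable from the output (which is what ``running the peeling backwards'' silently assumes). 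The entire content of the theorem lives in exactly these choices.

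For comparison, the paper's construction requires substantially more structure than your sketch anticipates. It first builds an auxiliary bijection $\varphi:\SS_n(3124,3214)\rightarrow\SS_n(3142,3241)$ via two block (``floor'') decompositions, and invokes $\varphi$ wholesale in the case $\rlmax(w)=1$; your plan has no analogue of this length-$4$ subroutine, and it is not clear how a single peel-and-reinsert recursion would handle that case. In the remaining cases the peeled entry is sometimes the maximum and sometimes the \emph{second-largest} left-to-right maximum, and the insertion position depends on a classification of the word into seven structural types (I-1 through I-3 and II-1 through II-4), governed by comparisons such as $w_{l_{s-1}}$ versus $w_{r_2}$ and whether the left-to-right maxima occupy an initial segment. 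Invertibility is then not automatic: the paper must define an explicit inverse $\beta$, classify the codomain words into matching types (A-1 through A-3, B-1 through B-3), and prove adjacency and relative-order observations showing that the type of $w$ can be read off from $\alpha(w)$. Your proposal correctly identifies that cross-reference interference is a danger in the pattern-avoidance verification, but that verification is downstream of a definition you have not supplied, so the proof attempt is incomplete at its core.
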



Notice that all statistics above can also be defined on words of distinct  letters in the sense of isomorphism
and we denote $\W_n$ as the set of such words with length $n$. Our bijection $\alpha$ will be constructed in some recursive way over $\W_n$, rather than over $\SS_n$ directly. Its construction is based on a new bijection between $\SS_n(3124,3214)$ and $\SS_n(3142,3241)$ via block decompositions.

The class of $(45312,45321,54312,54321)$-avoiding permutations arose from Kitaev and Remmel's study of quadrant marked mesh patterns~\cite{kr}. Its enumeration sequence has been registered as A212198 in the OEIS~\cite{oeis}. However, the problem to compute the generating function for this integer sequence remains open. We will solve this problem by studying $(201,210)$-avoiding inversion sequences (see Section~\ref{sec:4}) which were known~\cite{ms} to be in bijection with $(45312,45321,54312,54321)$-avoiding permutations.

The rest of this paper is organized as follows. In Section~\ref{sec:2}, we construct a new bijection between $\SS_n(3124,3214)$ and $\SS_n(3142,3241)$ via block decompositions, which is used in Section~\ref{sec:3} to built our main bijection $\alpha$. Section~\ref{sec:4} is devoted to the study of $(201,210)$-avoiding inversion sequences, including a refinement of Gao--Kitaev's conjecture, a succession rule and two functional equations.

\section{A new bijection between $\SS_n(3124,3214)$ and $\SS_n(3142,3241)$ via block decompositions}
\label{sec:2}
In~\cite{kl}, a bijection  between $\SS_n(3124,3214)$ and $\SS_n(3142,3241)$ was constructed through the intermediate structure of $021$-avoiding inversion sequences. In this section, a new bijection preserving  more set-valued statistics is constructed, which will be used to define our main bijection $\alpha$ in next section. One interesting feature of this new bijection is that it preserves the number of blocks that we now introduce.

For $\pi\in \SS_n$, let $\pi_0=\pi_{n+1}=0$. Define the set of \emph{ peaks} of $\pi$ by
\begin{align*}
  \Pk(\pi)  =\{\pi_i:  1 \leq i \leq n \,\, \text{and}\,\,  \pi_{i-1}< \pi_i > \pi_{i+1}\}.
\end{align*}
 For  $\pi \in \SS_n(3124,3214)\cup \SS_n(3142,3241)$,
 define the set of representatives coming from each block of $\pi$ by
 \[\Br(\pi)=\Vlrmax(\pi) \cap \Pk(\pi).\]
It turns out later that $\br(\pi)$ equals the number of blocks in our two block decompositions of $\pi$.

\begin{theorem}\label{thm:fourplus}
There exists a bijection $\varphi:\SS_n(3124,3214)\rightarrow\SS_n(3142,3241)$ that preserves the quintuple of set-valued statistics $(\Br,\Ides, \Vlrmax, \Vlrmin,\Iar)$.
\end{theorem}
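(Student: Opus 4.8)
The plan is to prove the theorem by first extracting, on both sides, a common combinatorial skeleton indexed by the set $\Br$, and then defining $\varphi$ as the unique redistribution of the remaining entries that exchanges the two avoidance constraints. I would begin with a structural lemma. Write $\Br(\pi)=\{M_1<M_2<\cdots<M_k=n\}$, occurring at positions $p_1<\cdots<p_k$ (these increase together, being left-to-right maxima that are peaks), and cut $\pi$ into blocks $B_i$ formed by the entries in positions $(p_{i-1},p_i]$ with $p_0=0$; then $M_i$ is the unique representative of $B_i$ and $\br(\pi)=k$. The key observation, valid for \emph{both} classes, is that a left-to-right maximum strictly inside a block cannot be immediately followed by a smaller entry, since that would make it a peak and hence a representative. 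Consequently every block has the rigid shape ``a run of small (non-record) entries lying below $M_{i-1}$, followed by an increasing run of new left-to-right maxima ending at the peak $M_i$.'' Thus the two classes share the same set $\Br$ and the same block skeleton; they differ only in the admissible placements of the small entries: $(3124,3214)$-avoidance restricts how many small entries a single block may carry, whereas $(3142,3241)$-avoidance constrains small entries sitting in different blocks.

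Since the theorem demands that $\varphi$ fix $\Vlrmax$ (the representatives together with the run entries), $\Vlrmin$, and $\Iar$ (which equals the first block, an increasing run), the map must keep every record in place and may only rearrange the small entries. I would therefore define $\varphi$ recursively: peel off $n=M_k$ together with the small entries it governs, apply $\varphi$ to the remaining shorter word, and reinsert $n$ and those small entries in the position dictated by the target shape, thereby trading a ``two smalls then a larger record'' configuration (forbidden by $3124,3214$) for a ``small, record, small'' configuration (forbidden by $3142,3241$). To make this recursion close up I would, as the paper does, run it over the set of words $\W_n$ of distinct letters rather than over $\SS_n$ directly, and build the inverse by the same peeling procedure in reverse.

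With $\varphi$ so defined, the four set-valued statistics $\Br$, $\Vlrmax$, $\Vlrmin$, and $\Iar$ are preserved essentially by construction: the representatives and the increasing runs are never moved, so $\Vlrmax$ and $\Br$ are untouched; the reinsertion is arranged not to create or destroy any left-to-right minimum, giving $\Vlrmin$; and the first block, which is precisely the initial ascending run, is left intact, giving $\Iar$. Verifying these amounts to checking that the peeling/reinsertion step neither introduces a new record nor disturbs the leftmost increasing run, which I would fold into the recursive hypothesis.

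The two genuine obstacles are the following. First, one must prove that the redistribution lands exactly in $\SS_n(3142,3241)$ and is invertible, that is, that capping the number of small entries per block and forbidding the cross-block ``small, record, small'' pattern are \emph{exchanged} bijectively by the reinsertion rule; this is where the structural lemma must be pushed to a complete description of the legal small-entry placements in each class. Second, and I expect this to be the hardest point, is the preservation of $\Ides$: since $\Ides(\pi)=\{i:\pi^{-1}(i)>\pi^{-1}(i+1)\}$ records, for each value $i$, whether $i$ occurs to the right of $i+1$, and since $\varphi$ moves the small entries, one must show the redistribution preserves the relative order of every pair of consecutive values. I would reduce this to a local statement at each block boundary, using that the representatives (which pin down the comparisons between a small entry and the records straddling it) are fixed, so that only finitely many consecutive-value pairs per block need to be examined. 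The remaining verifications are routine bookkeeping.
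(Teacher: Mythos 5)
Your high-level strategy --- decompose along the representatives in $\Br$, keep the records as a fixed skeleton, redistribute the non-record entries recursively, and invert by reverse peeling --- is the same as the paper's, but the proposal has a genuine gap at its foundation: the ``common skeleton'' lemma on which everything rests does not hold in the form you need, and the map is never actually defined. For a $(3124,3214)$-avoiding word (the paper's Proposition~\ref{prop:stru1}), each interval between consecutive representatives can carry at most \emph{one} non-record entry (the single dip $w_{i_j+1}$; a second one would create a $3124$ or $3214$ together with $\max(w)$), and the bulk of the non-records is forced into a tail $b\,b_k\cdots b_1$ sitting entirely \emph{after} $\max(w)$, in reverse floor order, with each $b_j$ confined to the value interval between the representatives $w_{i_{j-1}}$ and $w_{i_j}$. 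Your cutting into intervals $(p_{i-1},p_i]$ does not cover this tail at all, and it is precisely there that the two classes differ: in a $(3142,3241)$-avoiding word (Proposition~\ref{prop:stru2}) the non-records of each floor instead sit immediately after that floor's ascending run. This asymmetry is the entire content of the bijection; asserting that ``the two classes share the same block skeleton'' erases exactly the thing $\varphi$ has to convert. Consequently ``reinsert $n$ and the small entries it governs in the position dictated by the target shape'' is not a definition: without the two (different) structure propositions, including the value-interval constraints on the $b_j$'s, you cannot say what the target shape is, verify that the image avoids $\{3142,3241\}$, or construct the inverse.

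Second, the two points you yourself flag as ``the genuine obstacles'' --- that the redistribution lands bijectively in the target class, and that $\Ides$ is preserved --- are the substantive content of the theorem and are left undone. In the paper these are settled by induction (Proposition~\ref{prop:var}); the key step for $\Ides$ is that the blocks $b,b_1,\ldots,b_k$ occupy pairwise disjoint value intervals separated by the representatives, so permuting the blocks cannot alter the relative order of any pair of numerically adjacent letters, while within each floor the claim follows from the induction hypothesis. Your ``local statement at each block boundary'' points in the right direction but is not carried out. As it stands the proposal is a plan consistent with the paper's approach rather than a proof; completing it requires proving the two structure propositions separately, giving the explicit floor-by-floor recursion (concatenate $\varphi$ applied to each floor together with its value-matched $b$-block, with a separate rule when the word opens with a full ascending run up to $\max(w)$), and running the inductive verification of all five statistics.
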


We begin with the analysis of the structure of $\{3124,3214\}$-avoiding  words of different letters.
Given 
$w=w_1w_2 \cdots w_n \in \W_n(3124,3214)$,  assume that $w=w'w''$ where
 $w'$ is the union of ascending runs
$w_1 w_2 \cdots w_{i_1}$,  $w_{i_1+1}\cdots w_{i_2}$,
$\ldots$, $w_{i_{k-1}+1} \cdots w_{i_{k}}$, $w_{i_{k}+1} \cdots w_x$ and $w''=w_{x+1} \cdots w_n$ with $w_x=\max(w)$.
We have the following proposition (see Fig.~\ref{fig:312214} the visualization).

\begin{proposition}\label{prop:stru1}
Suppose that $w=w_1w_2 \cdots w_n \in \W_n(3124,3214)$  as written in the above version, then
\begin{itemize}
  \item[1.]  elements $w_j$ with $j \leq x$ and $j \neq i_1+1, i_2+1, \ldots, i_k+1$ are left-to-right maxima of $w$, namely,
  $$
  \Vlrmax(w)=\{w_1, \ldots, w_{i_1}, w_{i_1+2},\ldots,w_{i_2}, w_{i_2+2},\ldots, w_{i_k}, w_{i_k+2}, \ldots, w_x\}.
  $$
  \item[2.] $w''=b b_{k} \cdots b_{1}$, where
  $b$ is a block of consecutive elements larger than
    $w_{i_k}$,
      $b_{j}$ is a block of consecutive elements smaller than $w_{ i_j}$
      and larger than $w_{i_{j-1}}$ with $ 1< j <k$, $b_{1}$ is a block of consecutive elements smaller than $w_{i_1}$.
\end{itemize}
\end{proposition}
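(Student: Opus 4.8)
\emph{The plan is} to reduce everything to a single clean reformulation of $\{3124,3214\}$-avoidance. Observe first that a word $w\in\W_n$ contains $3124$ or $3214$ if and only if there are positions $p_1<p_2<p_3<p_4$ with $w_{p_2},w_{p_3}<w_{p_1}<w_{p_4}$: both patterns have first letter equal to the second-largest value, last letter equal to the largest, and middle two letters equal to the two smallest (in either relative order). Thus $w\in\W_n(3124,3214)$ exactly when no such quadruple exists, and I would record this as a preliminary observation, since both parts of the proposition follow from it by exhibiting forbidden quadruples anchored at the global maximum $w_x$.

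For Part~1 I would argue by induction on the runs, proving simultaneously that the run maxima $w_{i_1}<w_{i_2}<\cdots<w_{i_k}<w_x$ increase and that every non-initial entry of each run is a left-to-right maximum. The first run $w_1\cdots w_{i_1}$ is increasing, so all its entries are left-to-right maxima and $w_{i_1}$ is the running maximum. For the step, assume $w_{i_{r-1}}$ is the running maximum through position $i_{r-1}$. Since $w_{i_{r-1}}>w_{i_{r-1}+1}$, the entry $w_{i_{r-1}+1}$ is not a left-to-right maximum, which accounts for the excluded positions $i_1+1,\dots,i_k+1$. The crux is to show $w_{i_{r-1}+2}>w_{i_{r-1}}$ (when this entry exists and differs from $w_x$): if instead $w_{i_{r-1}+2}<w_{i_{r-1}}$, then together with $w_{i_{r-1}+1}<w_{i_{r-1}+2}$ (ascending run) and $w_x=\max(w)$ the quadruple $(w_{i_{r-1}},w_{i_{r-1}+1},w_{i_{r-1}+2},w_x)$ is of the forbidden type, a contradiction. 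Once $w_{i_{r-1}+2}>w_{i_{r-1}}$, the remaining entries of run $r$ are larger still, so all its non-initial entries are left-to-right maxima and $w_{i_r}$ becomes the new running maximum, closing the induction.

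For Part~2 I would show that, reading $w''=w_{x+1}\cdots w_n$ from left to right, the entries never cross a threshold $w_{i_j}$ upward. Fix $j\in\{1,\dots,k\}$ and suppose there were positions $x<p<q$ with $w_p<w_{i_j}<w_q$. The valley $w_{i_j+1}$ satisfies $w_{i_j+1}<w_{i_j}$ because $i_j$ is a descent, and $i_j<i_j+1\le x<p<q$; hence $(w_{i_j},w_{i_j+1},w_p,w_q)$ has both middle entries below $w_{i_j}$ and its last entry above $w_{i_j}$, again a forbidden quadruple. It is precisely here that \emph{both} $3124$ and $3214$ are needed, since $w_{i_j+1}$ and $w_p$ may occur in either relative order. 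This contradiction shows that in $w''$ all entries exceeding $w_{i_j}$ precede all entries below $w_{i_j}$, for every $j$. Running this over the thresholds $w_{i_1}<\cdots<w_{i_k}$ forces the value-interval to which an entry of $w''$ belongs to be weakly decreasing along $w''$, which is exactly the decomposition $w''=b\,b_k\cdots b_1$ into the blocks with value ranges $(w_{i_k},\max(w))$, $(w_{i_{j-1}},w_{i_j})$, and $(0,w_{i_1})$.

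The main obstacle is not the induction itself but choosing the correct witnesses for the forbidden quadruples and disposing of the degenerate cases cleanly: checking that the valley $w_{i_j+1}$ genuinely exists and lies below $w_{i_j}$ (where $i_j$ being a descent is used), that the four chosen positions are distinct and correctly ordered (in particular the borderline case $i_{r-1}+2=x$ in Part~1, where the conclusion is immediate because $w_x=\max(w)$), and that possibly empty blocks are allowed in the decomposition. Once the avoidance reformulation is in place, each of these is routine, so I expect the write-up to be short.
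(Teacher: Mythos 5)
Your proposal is correct and takes essentially the same route as the paper: part 1 rests on the witness $w_{i_j}w_{i_j+1}w_{i_j+2}w_x$ and part 2 on the witness $w_{i_j}w_{i_j+1}w_pw_q$ (the paper phrases the latter via the rightmost element $w_h$ exceeding $w_{i_j}$), which are exactly the quadruples the paper uses. Your ``forbidden quadruple'' reformulation of $\{3124,3214\}$-avoidance and the run-by-run induction are just a more explicit packaging of the paper's two-line argument, with slightly more care about the degenerate cases.
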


\begin{proof}
To prove property $1$, it suffices to show that  $w_{i_j+2}>w_{i_j}$ for each $1 \leq j \leq k$. This is obviously true, otherwise $w_{i_j} w_{i_j+1} w_{i_j+2} w_x$ will form a $3124$ or $3214$-pattern.
Moreover, assume that $w_h$ is the rightmost element larger than $w_{i_j}$, clearly $w_h$ is a right-to-left maximum of $w$.
We claim that $w_l>w_{i_j}$ for all $ i_j+1< l < h$, otherwise
$w_{i_j} w_{i_j+1} w_{l} w_h$ will be an instance of pattern $3124$ or $3214$. This leads us to property $2$ and the proof now is completed.
\end{proof}

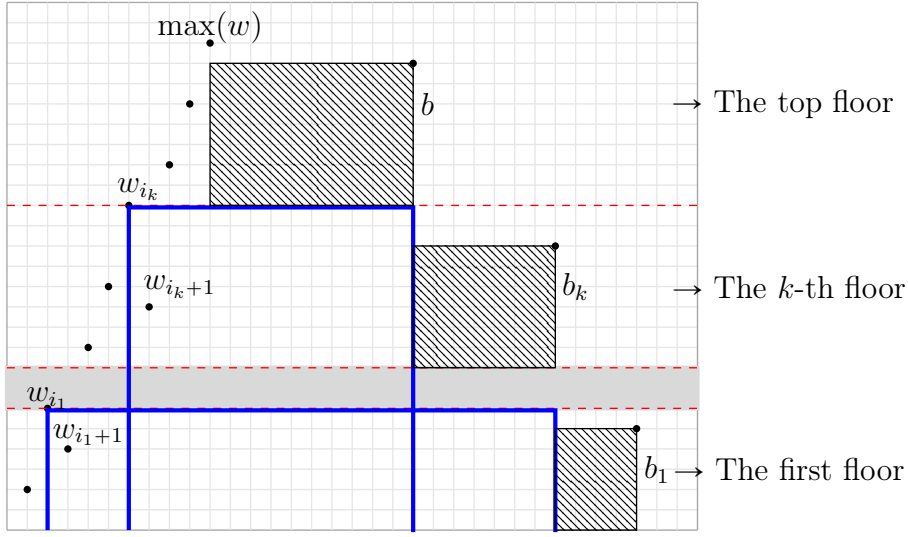
\begin{figure}
\begin{center}
\begin{tikzpicture}[line width=0.5pt,scale=0.27]
\coordinate (O) at (0,0);
\coordinate (O1) at (0,-1);
\coordinate (C) at (1,-25);
\draw [gray!70] (O)--++(34,0);
\draw [gray!20] (O)++(0,-1)--++(34,0);
\draw [gray!20] (O)++(0,-2)--++(34,0);
\draw [gray!20] (O)++(0,-3)--++(34,0);
\draw [gray!20] (O)++(0,-4)--++(34,0);
\draw [gray!20] (O)++(0,-5)--++(34,0);
\draw [gray!20] (O)++(0,-6)--++(34,0);
\draw [gray!20] (O)++(0,-7)--++(34,0);
\draw [gray!20] (O)++(0,-8)--++(34,0);
\draw [gray!20] (O)++(0,-9)--++(34,0);
\draw [gray!20] (O)++(0,-10)--++(34,0);
\draw [gray!20] (O)++(0,-11)--++(34,0);
\draw [gray!20] (O)++(0,-12)--++(34,0);
\draw [gray!20] (O)++(0,-13)--++(34,0);
\draw [gray!20] (O)++(0,-14)--++(34,0);
\draw [gray!20] (O)++(0,-15)--++(34,0);
\draw [gray!20] (O)++(0,-16)--++(34,0);
\draw [gray!20] (O)++(0,-17)--++(34,0);
\draw [gray!20] (O)++(0,-18)--++(34,0);
\draw [gray!20] (O)++(0,-20)--++(34,0);
\draw [gray!20] (O)++(0,-21)--++(34,0);
\draw [gray!20] (O)++(0,-22)--++(34,0);
\draw [gray!20] (O)++(0,-23)--++(34,0);
\draw [gray!20] (O)++(0,-24)--++(34,0);
\draw [gray!20] (O)++(0,-25)--++(34,0);
\draw [gray!70] (O)++(0,-26)--++(34,0);
\draw [gray!70] (O)--++(0,-26);
\draw [gray!20] (O)++(1,0)--++(0,-26);
\draw [gray!20] (O)++(2,0)--++(0,-26);
\draw [gray!20] (O)++(3,0)--++(0,-26);
\draw [gray!20] (O)++(4,0)--++(0,-26);
\draw [gray!20] (O)++(5,0)--++(0,-26);
\draw [gray!20] (O)++(6,0)--++(0,-26);
\draw [gray!20] (O)++(7,0)--++(0,-26);
\draw [gray!20] (O)++(8,0)--++(0,-26);
\draw [gray!20] (O)++(9,0)--++(0,-26);
\draw [gray!20] (O)++(10,0)--++(0,-26);
\draw [gray!20] (O)++(11,0)--++(0,-26);
\draw [gray!20] (O)++(12,0)--++(0,-26);
\draw [gray!20] (O)++(13,0)--++(0,-26);
\draw [gray!20] (O)++(14,0)--++(0,-26);
\draw [gray!20] (O)++(15,0)--++(0,-26);
\draw [gray!20] (O)++(16,0)--++(0,-26);
\draw [gray!20] (O)++(17,0)--++(0,-26);
\draw [gray!20] (O)++(18,0)--++(0,-26);
\draw [gray!20] (O)++(19,0)--++(0,-26);
\draw [gray!20] (O)++(20,0)--++(0,-26);
\draw [gray!20] (O)++(21,0)--++(0,-26);
\draw [gray!20] (O)++(22,0)--++(0,-26);
\draw [gray!20] (O)++(23,0)--++(0,-26);
\draw [gray!20] (O)++(24,0)--++(0,-26);
\draw [gray!20] (O)++(25,0)--++(0,-26);
\draw [gray!20] (O)++(26,0)--++(0,-26);
\draw [gray!20] (O)++(27,0)--++(0,-26);
\draw [gray!20] (O)++(28,0)--++(0,-26);
\draw [gray!20] (O)++(29,0)--++(0,-26);
\draw [gray!20] (O)++(30,0)--++(0,-26);
\draw [gray!20] (O)++(31,0)--++(0,-26);
\draw [gray!20] (O)++(32,0)--++(0,-26);
\draw [gray!20] (O)++(33,0)--++(0,-26);
\draw [gray!20] (O)++(34,0)--++(0,-26);
\draw [gray!70] (O)++(34,0)--++(0,-26);

\draw [fill=gray!30,ultra thick,gray!30] (0,-18) rectangle (34,-20);
\fill[black!100] (C)++(0,1) circle(1ex)++(1,4) circle(1ex)
++(1,-2) circle(1ex)
++(0,2)
++(1,3) circle(1ex)++(1,3) circle(1ex)
++(1,4) circle(1ex)++(1,-5) circle(1ex)++(1,7) circle(1ex)
++(1,3) circle(1ex)++(1,3) circle(1ex)++(10,-1) circle(1ex)
++(7,-9) circle(1ex)
++(4,-9) circle(1ex);
\draw [dashed,red] (C)++(-1,5)--++(34,0);
\draw [dashed,red] (C)++(-1,7)--++(34,0);
\draw [dashed,red] (C)++(-1,15)--++(34,0);

\draw [ultra thick,blue] (O)++(2,-20)--++(0,-6);
\draw [ultra thick,blue] (O)++(2,-20.1)--++(25,0)--++(0,-6);

\draw [ultra thick,blue] (O)++(6,-10)--++(0,-16);
\draw [ultra thick,blue] (O)++(6,-10.1)--++(14,0)--++(0,-16);

\draw[pattern=north west lines] (27,-21) rectangle (31,-26);
\draw[pattern=north west lines] (20,-12) rectangle (27,-18);
\draw[pattern=north west lines] (10,-3) rectangle (20,-10);

\path (2,-19.5)  node {$w_{i_1}$}
++(2,-1.8) node {$w_{i_1+1}$};

\path (6.5,-9.2)  node {$w_{i_k}$}
++(2,-4.8) node {$w_{i_k+1}$};

\path (10,-1.2)  node {$\max{(w)}$};

\path (32,-23)  node {$b_1$};
\path (28,-14)  node {$b_k$};
\path (20.7,-5)  node {$b$};

\path (38.5,-23)  node {$\boldsymbol{\rightarrow}$ The first floor};
\path (38.5,-14)  node {$\boldsymbol{\rightarrow}$ The $k$-th floor};
\path (38.2,-5)  node {$\boldsymbol{\rightarrow}$ The top floor};
\end{tikzpicture}
\caption{The structure of a $\{3124, 3214\}$-avoiding word of different letters.}\label{fig:312214}
\end{center}
\end{figure}

Based on Proposition \ref{prop:stru1}, we see that a
$\{3124,3214\}$-avoiding word of different letters may consist of
several floors, and  each floor begins with an ascending run
of all its left-to-right maxima.
More precisely, it can be always written in the form of
\[
w=w_1 w_2 \cdots w_{i_1}w_{i_1+1}\cdots  w_{i_{k-1}+1} \cdots w_{i_{k}} w_{i_{k}+1} \cdots w_x b b_k \cdots b_1.
\]
For convenience, we write this type of block decomposition of $w$ as type I.
See Fig.~\ref{fig:312214} for a transparent illustration of this decomposition. Note that $\Br(w)=\{w_{i_1},\ldots,w_{i_k},w_x\}$ and $\br(w)=k+1$.

Given a $\{3142,3241\}$-avoiding word of different letters
$v=v_1v_2 \cdots v_n$,   assume that $j_1=1$ and
\[\Vlrmax(v)=\{v_{j_1},v_{j_1+1},\ldots, v_{j_1+l_1},
\ldots, v_{j_k},v_{j_k+1},\ldots,v_{j_k+l_k},
 v_{j_{k+1}},\ldots, v_{j_{k+1}+l_{k+1}}
\}.\]
We write
\[v=v_{j_1}v_{j_1+1}\cdots v_{j_1+l_1} d_1
\cdots v_{j_k} v_{j_k+1}\cdots v_{j_k+l_k}d_k
 v_{j_{k+1}}\cdots v_{j_{k+1}+l_{k+1}}d_{k+1},\]
where $d_1, d_2, \ldots, d_k, d_{k+1}$  are blocks of consecutive letters of $v$. For convenience, we call this type of  block decomposition  as type II.
Then, we have the following proposition (see Fig.~\ref{fig:312421} for the visualization).

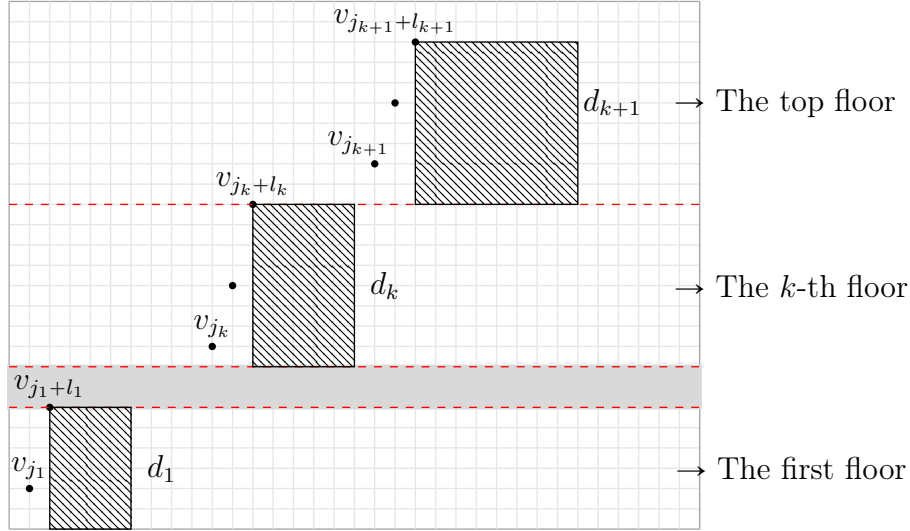
\begin{figure}
\begin{center}
\begin{tikzpicture}[line width=0.5pt,scale=0.27]
\coordinate (O) at (0,0);
\coordinate (O1) at (0,-1);
\coordinate (C) at (1,-25);
\draw [gray!70] (O)--++(34,0);
\draw [gray!20] (O)++(0,-1)--++(34,0);
\draw [gray!20] (O)++(0,-2)--++(34,0);
\draw [gray!20] (O)++(0,-3)--++(34,0);
\draw [gray!20] (O)++(0,-4)--++(34,0);
\draw [gray!20] (O)++(0,-5)--++(34,0);
\draw [gray!20] (O)++(0,-6)--++(34,0);
\draw [gray!20] (O)++(0,-7)--++(34,0);
\draw [gray!20] (O)++(0,-8)--++(34,0);
\draw [gray!20] (O)++(0,-9)--++(34,0);
\draw [gray!20] (O)++(0,-10)--++(34,0);
\draw [gray!20] (O)++(0,-11)--++(34,0);
\draw [gray!20] (O)++(0,-12)--++(34,0);
\draw [gray!20] (O)++(0,-13)--++(34,0);
\draw [gray!20] (O)++(0,-14)--++(34,0);
\draw [gray!20] (O)++(0,-15)--++(34,0);
\draw [gray!20] (O)++(0,-16)--++(34,0);
\draw [gray!20] (O)++(0,-17)--++(34,0);
\draw [gray!20] (O)++(0,-18)--++(34,0);
\draw [gray!20] (O)++(0,-20)--++(34,0);
\draw [gray!20] (O)++(0,-21)--++(34,0);
\draw [gray!20] (O)++(0,-22)--++(34,0);
\draw [gray!20] (O)++(0,-23)--++(34,0);
\draw [gray!20] (O)++(0,-24)--++(34,0);
\draw [gray!20] (O)++(0,-25)--++(34,0);
\draw [gray!70] (O)++(0,-26)--++(34,0);
\draw [gray!70] (O)--++(0,-26);
\draw [gray!20] (O)++(1,0)--++(0,-26);
\draw [gray!20] (O)++(2,0)--++(0,-26);
\draw [gray!20] (O)++(3,0)--++(0,-26);
\draw [gray!20] (O)++(4,0)--++(0,-26);
\draw [gray!20] (O)++(5,0)--++(0,-26);
\draw [gray!20] (O)++(6,0)--++(0,-26);
\draw [gray!20] (O)++(7,0)--++(0,-26);
\draw [gray!20] (O)++(8,0)--++(0,-26);
\draw [gray!20] (O)++(9,0)--++(0,-26);
\draw [gray!20] (O)++(10,0)--++(0,-26);
\draw [gray!20] (O)++(11,0)--++(0,-26);
\draw [gray!20] (O)++(12,0)--++(0,-26);
\draw [gray!20] (O)++(13,0)--++(0,-26);
\draw [gray!20] (O)++(14,0)--++(0,-26);
\draw [gray!20] (O)++(15,0)--++(0,-26);
\draw [gray!20] (O)++(16,0)--++(0,-26);
\draw [gray!20] (O)++(17,0)--++(0,-26);
\draw [gray!20] (O)++(18,0)--++(0,-26);
\draw [gray!20] (O)++(19,0)--++(0,-26);
\draw [gray!20] (O)++(20,0)--++(0,-26);
\draw [gray!20] (O)++(21,0)--++(0,-26);
\draw [gray!20] (O)++(22,0)--++(0,-26);
\draw [gray!20] (O)++(23,0)--++(0,-26);
\draw [gray!20] (O)++(24,0)--++(0,-26);
\draw [gray!20] (O)++(25,0)--++(0,-26);
\draw [gray!20] (O)++(26,0)--++(0,-26);
\draw [gray!20] (O)++(27,0)--++(0,-26);
\draw [gray!20] (O)++(28,0)--++(0,-26);
\draw [gray!20] (O)++(29,0)--++(0,-26);
\draw [gray!20] (O)++(30,0)--++(0,-26);
\draw [gray!20] (O)++(31,0)--++(0,-26);
\draw [gray!20] (O)++(32,0)--++(0,-26);
\draw [gray!20] (O)++(33,0)--++(0,-26);
\draw [gray!20] (O)++(34,0)--++(0,-26);
\draw [gray!70] (O)++(34,0)--++(0,-26);

\draw [fill=gray!30,ultra thick,gray!30] (0,-18) rectangle (34,-20);

\fill[black!100] (C)++(0,1) circle(1ex)++(1,4) circle(1ex)
++(8,3) circle(1ex)++(1,3) circle(1ex)
++(1,4) circle(1ex)
++(6,2) circle(1ex)
++(1,3) circle(1ex)
++(1,3) circle(1ex);

\draw [dashed,red] (C)++(-1,5)--++(34,0);
\draw [dashed,red] (C)++(-1,7)--++(34,0);
\draw [dashed,red] (C)++(-1,15)--++(34,0);

\draw[pattern=north west lines] (2,-20) rectangle (6,-26);
\draw[pattern=north west lines] (12,-10) rectangle (17,-18);
\draw[pattern=north west lines] (20,-2) rectangle (28,-10);
\path (7.5,-23)  node {$d_1$}
++(11,9)  node {$d_k$}
++(11.2,9)  node {$d_{k+1}$};

\path (2,-19)  node {$v_{j_1+l_1}$}
++(-1,-4) node {$v_{j_1}$}
++(1,4)
++(8,3) node {$v_{j_k}$}
++(2,7) node {$v_{j_k+l_k}$};

\path (19,-1)  node {$v_{j_{k+1}+l_{k+1}}$}
++(-1.8,-6) node {$v_{j_{k+1}}$};

\path (38.5,-23)  node {$\boldsymbol{\rightarrow}$ The first floor};
\path (38.5,-14)  node {$\boldsymbol{\rightarrow}$ The $k$-th floor};
\path (38.2,-5)  node {$\boldsymbol{\rightarrow}$ The top floor};
\end{tikzpicture}
\caption{The structure of a $\{3142, 3241\}$-avoiding word of different letters.}\label{fig:312421}
\end{center}
\end{figure}

\begin{proposition}\label{prop:stru2}
Suppose that $v=v_1v_2 \cdots v_n \in \W_n({3142,3241})$ with block decomposition of type II, then
all the elements of $d_1$ are smaller than $v_{j_1+l_1}$ and
all the elements of $d_s$ are smaller than $v_{j_s+l_s}$ and larger than $v_{j_{s-1}+l_{s-1}}$ for $2 \leq s \leq k+1$.
\end{proposition}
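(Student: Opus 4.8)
The plan is to establish the two inequalities separately, treating the (easy) upper bounds first and then deducing the lower bounds from the forbidden patterns. Throughout I will freely use that, by the very definition of the type~II decomposition, every letter appearing in a block $d_s$ is \emph{not} a left-to-right maximum of $v$, whereas each $v_{j_s+l_s}$ is the last, hence the largest, left-to-right maximum occurring at or before its position, so that $v_{j_s+l_s}=\max(v_1,\dots,v_{j_s+l_s})$.

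\textbf{Upper bounds.} First I would show that every letter $e$ of $d_s$ satisfies $e<v_{j_s+l_s}$, for all $1\le s\le k+1$. Indeed, the letters lying strictly between the run ending at $v_{j_s+l_s}$ and the letter $e$ all belong to $d_s$, hence are non-left-to-right maxima, so the running maximum of $v$ remains equal to $v_{j_s+l_s}$ from that run up to the position just before $e$. Since $e$ is itself not a left-to-right maximum, it is smaller than this running maximum, that is, $e<v_{j_s+l_s}$. This already yields the statement for $d_1$ and the upper half of the statement for $2\le s\le k+1$.

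\textbf{Lower bounds.} The substantive step is to prove, for $2\le s\le k+1$, that every letter $e$ of $d_s$ satisfies $e>v_{j_{s-1}+l_{s-1}}$; here is where I would invoke the avoidance of $3142$ and $3241$. Suppose to the contrary that some $e\in d_s$ has $e<v_{j_{s-1}+l_{s-1}}$ (equality is impossible as the letters are distinct). Since the runs of left-to-right maxima indexed $s-1$ and $s$ are distinct, the block $d_{s-1}$ between them is nonempty (the letter immediately following the run that ends at $v_{j_{s-1}+l_{s-1}}$ is not a left-to-right maximum, whereas $v_{j_s}$ is), so I may pick a letter $f$ of $d_{s-1}$. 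Reading $v$ from left to right, the four letters
\[
v_{j_{s-1}+l_{s-1}},\qquad f,\qquad v_{j_s},\qquad e
\]
occur in this positional order. Their values satisfy $f<v_{j_{s-1}+l_{s-1}}$ (the upper bound applied to $d_{s-1}$), $e<v_{j_{s-1}+l_{s-1}}$ (the contrary assumption), and $v_{j_{s-1}+l_{s-1}}<v_{j_s}$ (as $v_{j_s}$ is a left-to-right maximum). Hence $v_{j_s}$ is the largest of the four, $v_{j_{s-1}+l_{s-1}}$ the second largest, and $f,e$ the two smallest, so these positions form a $3241$-pattern when $f>e$ and a $3142$-pattern when $f<e$ --- either way a forbidden pattern, a contradiction. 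Therefore $e>v_{j_{s-1}+l_{s-1}}$, as claimed.

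\textbf{Main obstacle.} The only delicate points, which I would flag explicitly, are (i) the nonemptiness of $d_{s-1}$, needed to supply the ``middle-low'' letter $f$, and (ii) the fact that \emph{both} $3142$ and $3241$ must be excluded: since we have no control over whether $f$ or $e$ is the larger, forbidding a single one of the two patterns would not force the contradiction. Everything else is routine bookkeeping with the running maximum.
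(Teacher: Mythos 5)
Your proof is correct and takes essentially the same route as the paper's: the paper likewise treats the upper bounds as immediate from the definition of left-to-right maxima and gets the lower bounds by exhibiting the quadruple $v_{j_{s-1}+l_{s-1}}\, v_{j_{s-1}+l_{s-1}+1}\, v_{j_s+l_s}\, x$ as an occurrence of $3142$ or $3241$, which differs from your witness $(v_{j_{s-1}+l_{s-1}}, f, v_{j_s}, e)$ only in the choice of representatives from $d_{s-1}$ and from the $s$-th run of left-to-right maxima. Your explicit verification that $d_{s-1}$ is nonempty, and your remark that both forbidden patterns are genuinely needed, make precise two points the paper leaves implicit.
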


\begin{proof}
Suppose that $x$ is an element of $d_s$ with  $2 \leq s \leq k+1$,
clearly we have  $x<v_{j_s+l_s}$.  Assume to the contrary that
$x<v_{j_{s-1}+l_{s-1}}$, then $v_{j_{s-1}+l_{s-1}} v_{j_{s-1}+l_{s-1}+1} v_{j_s+l_s} x$ will form a pattern of $3142$ or $3241$, a contradiction.
Thus, we have $x>v_{j_s+l_s}$.
Notice that $v_{j_1+l_1}$ and  $v_{j_2}$ are the left-to-right maxima of $v$, elements of $d_1$ are certainly smaller than $v_{j_1+l_1}$. This completes the proof.
\end{proof}

Based on Proposition \ref{prop:stru2}, we see that a
$\{3142,3241\}$-avoiding word of different letters also consists of
several floors, and  each floor begins with an ascending run
of all its left-to-right maxima.
See Fig.~\ref{fig:312421} for a transparent illustration of this decomposition.

Now, we are ready to give the description of $\varphi$.
Given a $\{3124,3214\}$-avoiding word of different letters $w=w_1 w_2 \cdots w_n$ with
block decomposition of type I, recall that $w_x=\max(w)$. We construct $\varphi(w)$ through the following three cases:
\begin{itemize}
  \item If $n=0$, then define $\varphi(\emptyset)=\emptyset$.

  \item If $w$ begins with an ascending run, the end of which is $\max(w)$, then define $\varphi(w)$ to be  the word obtained by inserting $\max(w)$ into $\varphi(w_1\cdots w_{x-1}w_{x+1}\cdots w_n)$ at position $x$.

  \item If it is not the cases above, then define $\varphi(w)$ to be the word
  \[
  \varphi(w_1 \cdots w_{i_1} w_{i_1+1} b_1)   \cdots
  \varphi(w_{i_{k-1}+2} \cdots w_{i_k} w_{i_k+1} b_k)
  \varphi(w_{i_{k}+2} \cdots w_{x} b)
  \]
\end{itemize}

To show that $\varphi$ is well defined, we need to verify that $\varphi(w)$ avoids $\{3142,3241\}$. This can be easily seen by induction on the length of the word in view of
item 2 in Proposition \ref{prop:stru1}.
To prove that $\varphi$ is a bijection, we give its inverse $\psi$.
Given $v=v_1 v_2 \cdots v_n \in \W_n(3142,3241)$,
with block decomposition of type II, notice that
 $\max(v)=v_{j_{k+1}+l_{k+1}}$.
We construct $\psi(v)$ through the following three cases:
\begin{itemize}
  \item If $n=0$, then define $\psi(\emptyset)=\emptyset$.

  \item If $v$ begins with an ascending run, the end of which is $\max(v)$, then define $\psi(v)$ to be  the word obtained by inserting $\max(v)$ into $\psi(v_1\cdots v_{j_{k+1}+l_{k+1}-1}v_{j_{k+1}+l_{k+1}+1}\cdots v_n)$ at position $j_{k+1}+l_{k+1}$.

  \item If it is not the cases above, then let
   $\psi(v_{j_s} \cdots v_{j_s+l_s}d_s) = F_sL_s$ with $1 \leq s \leq k+1$, where $F_s$ consists of the first $l_s+2$ elements of $\psi(v_{j_s} \cdots v_{j_s+l_s}d_s)$ and $L_s$
   consists of the remaining ones. Define $\psi(v)$  to be the word $F_1 F_2 \cdots F_{k+1} L_{k+1} L_{k} \cdots L_1$.
\end{itemize}

\begin{example}
Assume that $\pi \in \SS_{20}(3124,3214)$ and
\[\pi=2 \, 6\, 4\, 7\, 10\, 14\, 9\, 15\, 17\, 20\, 19\, 16\, 18\, 11\, 12\, 13\, 8\, 3\, 5\, 1.\]
In the notation of block decomposition of type I, we have
$k=2,\,i_1=2,\,i_2=6$, $x=10$, $b_1=3\,5\,1$, $b_2=11\,12\,13\,8$
and
$b=19\,16\,18$.
By the construction of $\varphi$, we may deduce that
\begin{align*}
  \varphi(\pi)&=\varphi(2 \, 6\, 4\, 3\, 5\, 1)
\varphi(7 \, 10 \, 14 \, 9 \, 11\, 12\, 13\,8)
\varphi(15\, 17\, 20\, 19\, 16\, 18)\\
  &= 2\, 6\, 4\, 3\, 1\, 5\, 7\, 10\, 14\, 9\, 8\, 11\, 12\, 13\, 15\, 17\, 20\, 19\, 16\, 18.
  \end{align*}

On the other hand, assume that $p \in \SS_{20}(3142,3241)$ and
\[
p=2\, 6\, 4\, 3\, 1\, 5\, 7\, 10\, 14\, 9\, 8\, 11\, 12\, 13\, 15\, 17\, 20\, 19\, 16\, 18.
\]

In the notation of block decomposition of type II, we have
$k=2$, $j_1=1$, $l_1=1$, $j_2=7$, $l_2=2$, $j_3=15$,
$l_3=2$, $d_1=4 \, 3\, 1\,5$, $d_2=9\, 8\, 11\, 12\, 13$,
$d_3=19 \, 16\, 18$. By the construction of $\psi$, we
have $\psi(2\,6\,4\,3\,1\,5)=2 \, 6\, 4\, 3\, 5\, 1$,
$\psi(7\,10\,14\,9\,8\,11\,12\,13)=7 \, 10 \, 14 \, 9 \, 11\, 12\, 13\,8$ and $\psi(15\,17\,20\,19\,16\,18)=15\, 17\, 20\, 19\, 16\, 18$. It follows that $F_1=2 \, 6\, 4$, $L_1=3\, 5\, 1$,
$F_2=7 \, 10 \, 14 \, 9$, $L_2=11\, 12\, 13\,8$,
$F_3=15\, 17\, 20\, 19$, $L_3=16\, 18$.
Hence, we deduce that
\begin{align*}
  \psi(p)&=F_1 F_2 F_3 L_3 L_2 L_1\\
  &=2 \, 6\, 4\, 7\, 10\, 14\, 9\, 15\, 17\, 20\, 19\, 16\, 18\, 11\, 12\, 13\, 8\, 3\, 5\, 1.
  \end{align*}

\end{example}

The following proposition of $\varphi$ makes sure that $\psi$ is the inverse of $\varphi$, which may be easily checked by induction.

\begin{proposition}\label{prop:vardes}
 Let $w$ be a $\{3124,3214\}$-avoiding word of different letters  with block decomposition of type I.
If $w_{i} w_{i+1}$ is a descent of $w$ for $1 \leq i \leq x$, then $w_{i}$ and $w_{i+1}$  remain  adjacent  in $\varphi(w)$.
\end{proposition}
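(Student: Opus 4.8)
The plan is to prove Proposition~\ref{prop:vardes} by induction on the length $n$ of the word $w$, following exactly the three-case recursive definition of $\varphi$. The statement to maintain is that any descent $w_iw_{i+1}$ occurring within the prefix up to and including position $x$ (where $w_x=\max(w)$) survives as an adjacency in $\varphi(w)$. The base case $n=0$ is vacuous, so the work is in the inductive step, which splits according to which of the two nontrivial cases of the definition of $\varphi$ applies.

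First I would handle the case where $w$ begins with a single ascending run ending at $\max(w)$. Here the prefix $w_1\cdots w_x$ is strictly increasing, so there are \emph{no} descents $w_iw_{i+1}$ with $i\le x$ to begin with, and the claim holds vacuously for those positions. The only subtlety is bookkeeping: $\varphi(w)$ is formed by deleting $\max(w)=w_x$, applying $\varphi$ to the shorter word $w_1\cdots w_{x-1}w_{x+1}\cdots w_n$, and reinserting $\max(w)$ at position $x$. Since reinsertion of the largest letter at a fixed position cannot break any existing adjacency, any adjacency guaranteed by the inductive hypothesis on the shorter word is preserved.

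The main case is the third one, where $\varphi(w)$ is the concatenation
\[
\varphi(w_1\cdots w_{i_1}w_{i_1+1}b_1)\cdots\varphi(w_{i_{k-1}+2}\cdots w_{i_k}w_{i_k+1}b_k)\,\varphi(w_{i_k+2}\cdots w_x b).
\]
The key observation is that each descent $w_iw_{i+1}$ with $i\le x$ lies entirely inside one of these recursive blocks, with $w_{i+1}$ still occurring within the prefix-portion of that block, so the inductive hypothesis applied to the block guarantees that $w_i$ and $w_{i+1}$ stay adjacent inside $\varphi$ of that block, hence adjacent in the concatenation. The one place that needs genuine care is the boundary descents of the form $w_{i_j}w_{i_j+1}$ at the end of each ascending run: I must confirm that $w_{i_j}$ and $w_{i_j+1}$ are placed into the \emph{same} recursive block. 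By the type~I decomposition, the $j$-th factor is $w_{i_{j-1}+2}\cdots w_{i_j}w_{i_j+1}b_j$, which contains both $w_{i_j}$ and $w_{i_j+1}$ consecutively as its last two ascending-run entries before the block $b_j$, so this descent is internal to that factor and the inductive hypothesis applies to it directly.

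The hard part will be making the indexing of the third case airtight: one must verify that the partition of positions $\{1,\dots,x\}$ induced by the ascending runs matches the grouping of letters into the recursive factors, so that no descent straddles two factors and every $w_{i+1}$ with $i\le x$ sits in the prefix (rather than the trailing block $b_j$) of its factor. I would address this by invoking Proposition~\ref{prop:stru1}, which pins down precisely which entries are left-to-right maxima and how the blocks $b,b_1,\dots,b_k$ are distributed; this guarantees the clean separation needed. Once the factor containing each relevant descent is identified and the inductive hypothesis is applied there, concatenation preserves the adjacency and the induction closes.
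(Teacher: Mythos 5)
Your induction does not close, and the gap sits exactly where the content of the proposition lies. In the second case of the definition of $\varphi$ (the word begins with an ascending run ending at $\max(w)$) you assert that there are no descents $w_iw_{i+1}$ with $i\le x$, so nothing need be checked. But $i=x$ is in range, and $w_xw_{x+1}$ \emph{is} a descent of $w$ whenever $x<n$, since $w_x=\max(w)$; it is in fact the only descent this case has to handle, and handling it is nontrivial. One must show that after deleting $w_x$, applying $\varphi$ to $u=w_1\cdots w_{x-1}w_{x+1}\cdots w_n$, and reinserting $w_x$ at position $x$, the letter $w_{x+1}$ sits at position $x+1$ of the result; this requires knowing that $\varphi(u)$ begins with $w_1\cdots w_{x-1}$ followed immediately by $w_{x+1}$, which one gets from the preservation of the initial ascending run (item~1 of Proposition~\ref{prop:var}) when $w_{x+1}>w_{x-1}$, and from the inductive hypothesis applied to the descent $w_{x-1}w_{x+1}$ of $u$ when $w_{x+1}<w_{x-1}$. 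Your blanket remark that ``reinsertion at a fixed position cannot break any existing adjacency'' is also false as stated: inserting $w_x$ at position $x$ separates the letters at positions $x-1$ and $x$ of $\varphi(u)$. Worse, the omission propagates: in the third case each boundary descent $w_{i_j}w_{i_j+1}$ lands in the factor $f_j=w_{i_{j-1}+2}\cdots w_{i_j}w_{i_j+1}b_j$, whose maximum is $w_{i_j}$ (by Proposition~\ref{prop:stru1} all of $b_j$ is smaller than $w_{i_j}$), so $\varphi(f_j)$ is computed by the second case and the adjacency you need is precisely the $i=x$ instance of the inductive hypothesis for $f_j$ --- the instance you declared vacuous. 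Thus your argument establishes the conclusion for none of the descents the proposition concerns.

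Separately, your claim in the third case that every descent with $i\le x$ lies inside a single recursive factor fails when $b=\emptyset$: then $w_{x+1}$ is the first letter of $b_k$, which is placed in the factor $f_k$, while $w_x$ lies in the last factor. This boundary is genuinely delicate; for instance $w=4\,2\,5\,6\,1\,3$ has $x=4$, $b=\emptyset$, $\varphi(w)=4\,2\,1\,3\,5\,6$, and the descent $w_4w_5=6\,1$ does not survive as an adjacency. So the statement must be read as concerning the descents interior to $w'$ (i.e.\ $i\in\{i_1,\ldots,i_k\}$, together with $i=x$ only when $w_{x+1}$ belongs to $b$), and any correct proof has to isolate and argue the ``descent immediately after the maximum'' situation rather than dismiss it.
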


Now, to complete the proof of
Theorem \ref{thm:fourplus}, we need to justify the following Proposition.

\begin{proposition}\label{prop:var}
 Given a $\{3124,3214\}$-avoiding word of different letters $w=w_1 w_2 \cdots w_n$ with block decomposition of type I, denote $v=\varphi(w)$ and we have
 \begin{itemize}
   \item [1.] $\Iar(w)=\Iar(v)$.
   \item [2.] $\Vlrmax(w)=\Vlrmax(v)$.
   \item [3.] $\Ides(w)=\Ides(v)$.
  \item [4.] $\Vlrmin(w)=\Vlrmin(v)$.
  \item [5.] $\Br(w)=\Br(v)$.
   \end{itemize}
\end{proposition}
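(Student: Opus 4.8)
The plan is to prove the five identities simultaneously by induction on the length $n$ of $w$, following the three cases in the recursive definition of $\varphi$. The case $n=0$ is trivial, so I would focus on the inductive step, whose two substantive cases are the ``insert the maximum'' case (second bullet) and the ``concatenate the blocks'' case (third bullet). Everything hinges on one structural lemma which I would isolate and prove first, and which I expect to be the main obstacle.

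The key lemma is an \emph{interval-partition} fact. Writing the third-case decomposition as $\varphi(w)=\varphi(P_1)\cdots\varphi(P_{k+1})$ with $P_1=w_1\cdots w_{i_1}w_{i_1+1}b_1$, $P_j=w_{i_{j-1}+2}\cdots w_{i_j}w_{i_j+1}b_j$ for $2\le j\le k$, and $P_{k+1}=w_{i_k+2}\cdots w_x b$, I claim each block $P_j$ occupies the consecutive value-interval $I_j:=(w_{i_{j-1}},w_{i_j}]$ (with $w_{i_0}=0$ and $w_{i_{k+1}}=\max(w)$), and that these intervals increase with $j$. By Proposition~\ref{prop:stru1} the only point needing proof is that the run-opener $w_{i_j+1}$ lying in $P_j$ satisfies $w_{i_j+1}>w_{i_{j-1}}$; this inequality is the crux. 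If it failed, then since also $w_{i_{j-1}+1}<w_{i_{j-1}}<\max(w)$, the four entries $w_{i_{j-1}},\,w_{i_{j-1}+1},\,w_{i_j+1},\,w_x$ read at these increasing positions would realise a $3124$ or a $3214$ pattern, contradicting $w\in\W_n(3124,3214)$. Granting this, a short bookkeeping with Proposition~\ref{prop:stru1} yields the decisive \emph{restriction fact}: deleting from $w$ all letters outside $I_j$ returns exactly $P_j$, and deleting from $v:=\varphi(w)$ all letters outside $I_j$ returns $Q_j:=\varphi(P_j)$ (immediate for $v$ from its concatenated form). By induction $\varphi$ preserves all five statistics on each shorter word $P_j$, so $P_j$ and $Q_j$ agree on them.

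Then I would treat the third case statistic by statistic. For $\Iar$ and $\Vlrmin$ everything localises to $I_1$: since $w_1\in I_1$, no letter outside $I_1$ is ever a left-to-right minimum, so $\Vlrmin(w)=\Vlrmin(P_1)$ and likewise $\Vlrmin(v)=\Vlrmin(Q_1)$; and the initial ascending run of both $w$ and $v$ ends inside the first block (as $P_1$, hence $Q_1$, is not increasing), giving $\Iar(w)=\Iar(P_1)=\Iar(Q_1)=\Iar(v)$. For $\Vlrmax$, $\Ides$, $\Br$ the statistics decompose over the intervals: since the $I_j$ increase, $\Vlrmax(w)=\bigsqcup_j\Vlrmax(P_j)$ and $\Vlrmax(v)=\bigsqcup_j\Vlrmax(Q_j)$; for $\Ides$, a consecutive value-pair inside a single $I_j$ contributes exactly as in $P_j$ (resp. $Q_j$) by the restriction fact, while a pair straddling two intervals never contributes because $\min(I_{j+1})$ follows $w_{i_j}=\max(I_j)$ in both $w$ and $v$; and for $\Br=\Vlrmax\cap\Pk$ one checks $\Br(P_j)=\{\max I_j\}$ directly (within $P_j$ only the top of its opening run is a peak), that each $\max I_j$ remains a peak of $v$ (for $j\le k$ it is the maximum of $Q_j$ but not its last letter, hence followed by a smaller letter; for $j=k+1$ it is the global maximum), and that no other left-to-right maximum of $v$ gains a peak at an interval junction. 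In each case the inductive agreement $P_j\cong Q_j$ closes the argument.

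Finally, the second case is an ``insert the global maximum $M$ at position $x$'' argument: $w=\iota_x(\hat w)$ and $v=\iota_x(\hat v)$, where $\hat w=w_1\cdots w_{x-1}w_{x+1}\cdots w_n$ and $\hat v=\varphi(\hat w)$, to which induction applies. First I would record that $\hat w$ and $\hat v$ share their first $x-1$ entries: since $\hat w$ opens with $w_1<\cdots<w_{x-1}$ we have $\iar(\hat w)\ge x-1$, so by the inductively known $\Iar(\hat w)=\Iar(\hat v)$ both words begin with the $x-1$ smallest elements of this common set, in increasing order. Inserting $M$ at position $x$ then determines each statistic from this shared prefix and from whether $\max(\hat w)$ sits at position $x-1$: $M$ joins $\Vlrmax$ and kills all later left-to-right maxima; $M$ is irrelevant to $\Vlrmin$ except in the trivial case $x=1$; the initial run is extended through $M$ and stops; the only new inverse descent concerns the pair $(\max(\hat w),M)$ and is decided by the shared entry $\hat w_{x-1}=\hat v_{x-1}$; and $\Br(w)=\{M\}=\Br(v)$ since the increasing prefix contributes no peak. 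All of these agree for $w$ and $v$, completing the induction. The hardest part throughout will be the interval-partition lemma and the two boundary subtleties it feeds, namely that straddling value-pairs never contribute to $\Ides$ and that interval junctions neither create nor destroy peaks in the computation of $\Br(v)$.
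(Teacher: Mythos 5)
Your proposal is correct and follows essentially the same route as the paper's proof: induction on the length of $w$ through the two substantive cases of the recursive definition of $\varphi$, using the floor/block structure from Proposition~\ref{prop:stru1} to localize each statistic to the blocks and then invoking the inductive hypothesis blockwise. The only place you go beyond the paper's write-up is in isolating and proving the run-opener inequality $w_{i_j+1}>w_{i_{j-1}}$ (so that the blocks $P_j$ really are consecutive value-intervals), a fact the paper uses only implicitly via Fig.~\ref{fig:312214}; your pattern-containment argument for it is correct.
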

\begin{proof}
We use induction on $n$. Clearly,  each item holds for $w=\emptyset$. Now suppose that this proposition holds for all $\{3124,3214\}$-avoiding words of different letters with length $n-1$. To justify it for $n$, we consider two cases.
\begin{itemize}
  \item  If
   $w$ begins with an ascending run ending with  $\max(w)$,
    recall that $\varphi(w)$ is obtained by inserting $w_x=\max(w)$ into $\varphi(w_1\cdots w_{x-1}w_{x+1}\cdots w_n)$ at position $x$.
     By  the induction hypothesis, $w_1w_2\cdots w_{x-1}$ with $w_1< w_2 < \cdots< w_{x-1}$ is just the initial sequence of $\varphi(w_1\cdots w_{x-1}w_{x+1}\cdots w_n)$, and hence $w_1w_2\cdots w_{x-1}w_x$
     is the initial run of $v$. Thus, items $1$ and $2$ are verified.
     Notice that the relative positions of $\max(w)$ and the second largest element of $w$ will not be changed by the map $\varphi$, then
     item $3$ follows directly from the induction hypothesis.
    By checking two cases, $x=1$ or not, we may easily deduce that $\Vlrmin(v)=\{\max(w)\} \cup \Vlrmin(\varphi(w_2 \cdots w_n))$
    or $\Vlrmin(v)=\Vlrmin(\varphi(w_1 \cdots w_{x-1} w_{x+1} \cdots w_n))$, respectively. Based on the induction, item $4$ is confirmed. As $\Iar(w)=\Iar(v)$, we have $\Br(w)=\Br(v)=\{w_x\}$ and so item $5$ holds.

  \item  If it is not the case above,  recall that
  \[v=\varphi(w_1 \cdots w_{i_1} w_{i_1+1} b_1)   \cdots
  \varphi(w_{i_{k-1}+2} \cdots w_{i_k} w_{i_k+1} b_k)
  \varphi(w_{i_{k}+2} \cdots w_{x} b).\]
  Following from the fact that the initial ascending run and the left-to-right minima of $v$ are just those of $\varphi(w_1 \cdots w_{i_1} w_{i_1+1} b_1)$, we obtain items $1$ and $4$. Moreover,
  it is not hard to see that the left-to-right maxima of $v$ are the union of the elements in the initial ascending runs of $\varphi(w_1 \cdots w_{i_1} w_{i_1+1} b_1),  \ldots,
  \varphi(w_{i_{k-1}+2}  \cdots  w_{i_k} w_{i_k+1} b_k)$ and
  $\varphi(w_{i_{k}+2}, \cdots w_{x} b)$.
  By the induction hypothesis, we deduce that
  \[\Vlrmax(v)=\{w_1, \ldots, w_{i_1},w_{i_1+2}, \ldots, w_{i_2},\ldots, w_{i_{k}+2}, \ldots, w_{x} \},\]
  which equals to $\Vlrmax(w)$. Hence, item $2$ follows.
  As for item $3$, firstly, we note that the change of the relative order of the blocks $b, b_1, \ldots, b_k$ under $\varphi$ will bring no difference to the relative order of numerically adjacent elements of $w$.
  Secondly, by induction we see that
  the relative order of numerically adjacent elements in each floor of $w$ remain the same. Combining these two facts, item $3$ follows. Finally, it is plain to see that $\Br(w)=\{w_{i_1},\ldots,w_{i_k},w_x\}=\Br(v)$ and item $5$ follows.
\end{itemize}
The proof is now completed.
\end{proof}

\subsection{Type II block decomposition and the generating function} This section aims to compute the generating function
$$
\sum_{n\geq1}z^n\sum_{\pi\in\SS_n(3142,3241)}t^{\ides(\pi)}p^{\lrmax(\pi)}q^{\lrmin(\pi)}
$$
for $(3142,3241)$-avoiding permutations using the type II block decomposition   in Proposition~\ref{prop:stru2}.
Recall that $\iar(\pi)$ is the length of the initial ascending run of $\pi$.   Let us introduce
$$
\B_n:=\{\pi\in\SS_n(3142,3241): \pi\neq\id_n, \iar(\pi)=\lrmax(\pi)\},
$$
where $\id_n=12\cdots n$ is the identity permutation of length $n$.
Define
\begin{align*}
S=S(x,t,p,q;z)&=\sum_{n\geq1}z^n\sum_{\pi\in\SS_n(3142,3241)}x^{\iar(\pi)}t^{\ides(\pi)}p^{\lrmax(\pi)}q^{\lrmin(\pi)}=\sum_{k\geq1} S_k(t,p,q;z)x^k,\\
B=B(x,t,p,q;z)&=\sum_{n\geq2}z^n\sum_{\pi\in\B_n}x^{\iar(\pi)}t^{\ides(\pi)}p^{\lrmax(\pi)}q^{\lrmin(\pi)}=\sum_{k\geq1} B_k(t,p,q;z)x^k,\\
I=I(x,p,q;z)&=\sum_{n\geq1}z^nx^{\iar(\id_n)}p^{\lrmax(\id_n)}q^{\lrmin(\id_n)}=\frac{xpqz}{1-xpz}.
\end{align*}
For convenience, set $S_k(1)=S_k(t,1,q;z)$ and $B_k(1)=B_k(t,1,q;z)$.
By  the type II block decomposition of $(3142,3241)$-avoiding permutations (see Fig.~\ref{fig:312421}) we have
\begin{equation}\label{eq:S}
S=I+B(S(1,t,p,1)+1).
\end{equation}
On the other hand, any permutation in $\B_n$ can be obtained in one of the following cases:
\begin{enumerate}
\item inserting $n$ into a position that is not the rightmost one in $\id_{n-1}$;
\item inserting $n$ at the beginning or after one of the letters in the initial ascending run of a permutation in $\B_{n-1}$;
\item inserting $n$ at the beginning or after one of the letters in the initial ascending  run of a permutation in $\SS_{n-1}(3142,3241)\setminus(\B_{n-1}\cup\{\id_{n-1}\})$.
\end{enumerate}
It then follows that
\begin{align*}
B&=z\sum_{k\geq1}B_k(1)(tpqx+tp^2x^2+tp^3x^3+\cdots+tp^kx^k+p^{k+1}x^{k+1})\\
&\quad+z\sum_{k\geq1}(S_k(1)-B_k(1))(tpqx+tp^2x^2+tp^3x^3+\cdots+tp^kx^k+tp^{k+1}x^{k+1})\\
&\quad-\sum_{k\geq1}tp^{k+1}x^{k+1}z^{k+1},
\end{align*}
which is simplified to
\begin{equation}\label{eq:B}
B=\biggl(\frac{tpxz}{1-px}+tpxz(q-1)\biggr)S(1,t,1,q)-\frac{tp^2x^2z}{1-px}S(px,t,1,q)+(1-t)pxzB-\frac{tp^2x^2z^2}{1-pxz}.
\end{equation}
Combining~\eqref{eq:S} and~\eqref{eq:B} results in
\begin{theorem} The generating function $S$ satisfies the algebraic equation
\begin{equation*}\label{eq:3142-3241}
\frac{(S-I)(1-(1-t)pxz)}{1+S(1,t,p,1)}=\biggl(\frac{tpxz}{1-px}+tpxz(q-1)\biggr)S(1,t,1,q)-\frac{tp^2x^2z}{1-px}S(px,t,1,q)-\frac{tp^2x^2z^2}{1-pxz}.
\end{equation*}
\end{theorem}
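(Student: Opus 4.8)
The plan is to eliminate the auxiliary series $B$ between the two functional equations \eqref{eq:S} and \eqref{eq:B}, both of which I take as already established from the type II block decomposition and the three-case insertion analysis of $\B_n$. Since these two identities express the single unknown $S$ in terms of $B$ together with a handful of specializations of $S$, the asserted relation should fall out by pure algebra once $B$ is removed; no kernel-method or resummation argument is needed at this stage.

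First I would solve \eqref{eq:S} for $B$. Because $S=I+B\,(S(1,t,p,1)+1)$ and the factor $1+S(1,t,p,1)$ is a unit in the ring of formal power series in $z$ (its constant term is $1$, as $S(1,t,p,1)$ has no $z^0$-term), I may divide to get
$$B=\frac{S-I}{1+S(1,t,p,1)}.$$
Next I would collect the $B$-terms in \eqref{eq:B}. Its right-hand side contains the single summand $(1-t)pxz\,B$; transposing it and factoring yields
$$B\bigl(1-(1-t)pxz\bigr)=\biggl(\frac{tpxz}{1-px}+tpxz(q-1)\biggr)S(1,t,1,q)-\frac{tp^2x^2z}{1-px}S(px,t,1,q)-\frac{tp^2x^2z^2}{1-pxz}.$$
Substituting the expression for $B$ from the previous display into the left-hand side then produces exactly the claimed equation, and the only bookkeeping left is to confirm that the three specializations on the right—namely $S(1,t,1,q)$ (put $x=1,p=1$), $S(px,t,1,q)$ (send $x\mapsto px$, $p\mapsto1$), and $S(1,t,p,1)$ (put $x=1,q=1$)—are precisely the ones appearing in the derivations of \eqref{eq:S} and \eqref{eq:B}. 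The equation is algebraic in the desired sense because, after clearing the denominators $1-px$, $1-pxz$, and $1+S(1,t,p,1)$, it is a polynomial relation among $S$ and these finitely many specializations.

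Since the final combination is purely formal, I do not expect any genuine obstacle in the step proving this theorem as stated; the substantive difficulty lies upstream, in \eqref{eq:B} itself, which I am allowed to assume. There the largest letter is inserted into the initial ascending run in each of the three cases, and correctly tracking $\iar$, $\ides$, $\lrmax$, and $\lrmin$ produces geometric-type sums $\sum_k p^{j}x^{j}\cdots$ whose telescoping yields the $1/(1-px)$ and $1/(1-pxz)$ factors together with the catalytic shift $x\mapsto px$ inside $S(px,t,1,q)$. The one place demanding care beyond routine algebra is the interaction between the variable $q$ recording $\lrmin$ and the $(q-1)$ correction term, but this is settled before the theorem is stated, so the proof here reduces to the elimination of $B$ described above.
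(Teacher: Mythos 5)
Your proposal is correct and matches the paper's proof exactly: the paper simply states that combining \eqref{eq:S} and \eqref{eq:B} yields the theorem, which is precisely your elimination of $B$ via $B=(S-I)/(1+S(1,t,p,1))$ and the transposition of the $(1-t)pxzB$ term. Nothing further is needed.
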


\section{The bijection $\alpha$ and proof of Theorem~\ref{thm:fiveplus}}
\label{sec:3}
Based on the bijection $\varphi$ between $\SS_n(3124,3214)$ and $\SS_n(3142,3241)$, this section is devoted to the
recursive construction of  our main bijection
$$
\alpha: \W_n(31245,32145,31254,32154)\rightarrow\W_n(31425,32415,31524,32514),
$$
which preserves  the quintuple of set-valued statistics $(\Ides,\Vlrmax,\Vlrmin,\Vrlmax,\Iar)$. This confirms Theorem~\ref{thm:fiveplus} in the sense of isomorphism.

{\bf The construction of $\alpha$.} Given  $w=w_1 w_2 \cdots w_n \in \W_n(31245,32145,31254,32154)$,
we always write $\Vlrmax(w)=\{w_{l_1}, w_{l_2},\ldots, w_{l_s} \}$ and
 $\Vrlmax(w)=\{w_{r_1}, w_{r_2},\ldots, w_{r_t} \}$ with $1=l_1<\cdots<l_s=r_1< \cdots <r_t=n$.
 Then $\alpha(w)$ can be constructed according to the following cases:
\begin{itemize}
  \item If $n=0$, then define $\alpha(\emptyset)=\emptyset$.

  \item If $\lrmax(w)=1$ and $\rlmax(w) \geq 1$, then
        $\alpha(w)=\max(w) \alpha(w_2 \cdots w_n)$.

  \item If $\lrmax(w)>1$ and $\rlmax(w) = 1$, then
        $\alpha(w)= \varphi(w_1 \cdots w_{n-1})\max(w)$, where $\varphi$ is the bijection introduced in Theorem~\ref{thm:fourplus}.

  \item If $\lrmax(w)>1$ and $\rlmax(w)>1$, then we consider two cases.
      \begin{itemize}
        \item[(a)] If $l_s=s$,
         then define $\alpha(w)$ to be  the word obtained by inserting $\max(w)$, namely $w_{l_s}$, into $\alpha(w_1 \cdots w_{l_s-1} w_{l_s+1} \cdots w_n)$ at position $l_s$.
        \item[(b)] If $l_s>s$,
        we further consider three cases.
\begin{itemize}
          \item[(b1)] If $w_{l_{s-1}}<w_{r_2}$, then let $u=\alpha(w_1 \cdots w_{l_s-1}w_{l_s+1} \cdots w_n).$
           When $l_{s-1}+1=l_s$,
           we construct $\alpha(w)$ by inserting $w_{l_{s}}$ into $u$ just after 
           the $(s-1)$-th left-to-right maximum of $u$.
           When $l_{s-1}+1<l_s$,
           we construct $\alpha(w)$ by inserting $w_{l_{s}}$ into $u$ just before the $s$-th left-to-right maximum of $u$.

           \item[(b2)] If $w_{l_{s-1}}>w_{r_2}$ and $s>2$,
           then denote  $u=\alpha(w_1 \cdots w_{l_{s-1}-1}w_{l_{s-1}+1} \cdots w_n)$.
           When $l_{s-2}+1=l_{s-1}$,
           we construct $\alpha(w)$ by inserting $w_{l_{s-1}}$ into $u$ just after the $(s-2)$-th left-to-right maximum of $u$.
           When $l_{s-2}+1<l_{s-1}$,
            insert $w_{l_{s-1}}$ into $u$ just before the $(s-1)$-th left-to-right maximum of $u$ and we obtain  $\alpha(w)$.

          \item[(b3)] If $w_{l_{s-1}}>w_{r_2}$ and $s=2$,
           then $\alpha(w)$ can be obtained by inserting
           $w_1$ at the beginning of $\alpha(w_2 \cdots w_n)$.
           \end{itemize}
              \end{itemize}
 \end{itemize}

To prove that $\alpha$ is a well-defined bijection, we need to analyze
the structure of the words in $\W_n(31245,32145,$ $31254,32154)$. Given such a word $w$, we focus on the cases when
 $\lrmax(w)>1$, $\rlmax(w)>1$ and $l_s>s$ (i.e., case~(b) above),
 which we refer to as the non-trivial cases.

\begin{lemma}\label{prop:<}
Assume that $w$ is a non-trivial $\{31245,32145,31254,32154\}$-avoiding word of different letters with $w_{l_{s-1}}<w_{r_2}$ and $l_s>s$,  there are totally three types:
\begin{itemize}
  \item[I-1.]  $l_s=l_{s-1}+1$.
  \item[I-2.]  $l_s=l_{s-1}+2$ and $w_j<w_{l_{s-1}}$ with $l_s<j<r_2$.
  \item[I-3.]  $l_s=l_{s-1}+2$ and there is an integer $k$ between $l_s$ and $r_2$ such that
      $w_j>w_{l_{s-1}}$ with $l_s<j\leq k$ and $w_j<w_{l_{s-1}}$ with $k<j< r_2$.

\end{itemize}
\end{lemma}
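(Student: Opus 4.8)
The plan is to first distill the four forbidden patterns into a single combinatorial condition and then read off every structural constraint from that one condition. Observe that $31245$, $32145$, $31254$ and $32154$ all share the same ``shape'': the first letter has middle rank, the second and third letters are the two smallest (in either relative order), and the fourth and fifth are the two largest (in either relative order). Hence $w$ avoids $\{31245,32145,31254,32154\}$ if and only if there are \emph{no} indices $a<b<c<d<e$ with
\[
w_b,\,w_c<w_a<w_d,\,w_e.
\]
I would state and use this reformulation throughout: each contradiction below is produced by exhibiting precisely such a quintuple of positions.

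Before the case analysis I would record two elementary facts about the distinguished letters. First, every letter $w_j$ with $l_{s-1}<j<l_s$ fails to be a left-to-right maximum, so $w_j<w_{l_{s-1}}$; thus all entries strictly between the last two left-to-right maxima lie below $w_{l_{s-1}}$. Second, since $w_{l_s}=\max(w)$ sits at position $r_1=l_s$, the letter $w_{r_2}$ is the largest entry occurring to the right of the global maximum, so the window of positions $l_s<j<r_2$ is bounded above by $w_{r_2}$. The hypothesis $w_{l_{s-1}}<w_{r_2}$ is the crucial inequality that lets $w_{l_s}$ and $w_{r_2}$ play the role of the ``two largest'' letters $w_d,w_e$ in the reformulation.

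Next I would prove that $l_s\le l_{s-1}+2$, which already separates type I-1 from the two remaining types. Suppose instead $l_s\ge l_{s-1}+3$. Then positions $l_{s-1}+1$ and $l_{s-1}+2$ both lie strictly between the last two left-to-right maxima, so by the first preliminary fact both carry values below $w_{l_{s-1}}$. Taking $a=l_{s-1}$, $b=l_{s-1}+1$, $c=l_{s-1}+2$, $d=l_s$, $e=r_2$ yields a forbidden quintuple, using $w_{l_s}=\max(w)>w_{l_{s-1}}$ and the hypothesis $w_{r_2}>w_{l_{s-1}}$, a contradiction. Hence $l_s\in\{l_{s-1}+1,\,l_{s-1}+2\}$, and the first alternative is exactly type I-1.

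Finally, in the case $l_s=l_{s-1}+2$ I would establish the ``unimodal'' behaviour that distinguishes I-2 from I-3, namely that among the entries in positions $l_s<j<r_2$ no entry below $w_{l_{s-1}}$ can be followed by an entry above $w_{l_{s-1}}$. Here the single letter $w_{l_{s-1}+1}$ sitting between the last two left-to-right maxima (which exists precisely because $l_s=l_{s-1}+2$, and which is $<w_{l_{s-1}}$) does the essential work: if there were positions $l_s<j_1<j_2<r_2$ with $w_{j_1}<w_{l_{s-1}}<w_{j_2}$, then $a=l_{s-1}$, $b=l_{s-1}+1$, $c=j_1$, $d=j_2$, $e=r_2$ would again be a forbidden quintuple. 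Consequently the entries exceeding $w_{l_{s-1}}$ form an initial segment of the window $(l_s,r_2)$: if this segment is empty we are in type I-2, and otherwise its right endpoint is the required index $k$, giving type I-3. The main obstacle is not any single step but the packaging: one must recognise the common shape of the four patterns and then, in each sub-case, select exactly the right five positions—in particular arrange that the ``two smaller'' entries genuinely precede the ``two larger'' ones in position—so that avoidance is violated.
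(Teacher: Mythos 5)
Your proposal is correct and follows essentially the same route as the paper: the bound $l_s\le l_{s-1}+2$ is obtained from the quintuple $w_{l_{s-1}}w_{l_{s-1}+1}w_{l_{s-1}+2}w_{l_s}w_{r_2}$, and the initial-segment (unimodal) structure of the window $(l_s,r_2)$ from a quintuple of the form $w_{l_{s-1}}w_{l_{s-1}+1}w_{j_1}w_{j_2}w_{r_2}$, exactly as in the paper's proof. Your up-front reformulation of the four patterns as the single condition ``no $a<b<c<d<e$ with $w_b,w_c<w_a<w_d,w_e$'' is only a packaging device, though it does let you state the window condition cleanly over $l_s<j<r_2$ rather than over $l_{s-1}<j\le k$ as the paper (slightly inaccurately) writes it.
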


\begin{proof}
Firstly, we show that there is at most one element between $w_{l_{s-1}}$
and $w_{l_s}$ in $w$. Otherwise,
we have  $l_s>l_{s-1}+2$. Then,
$w_{l_{s-1}}w_{l_{s-1}+1}w_{l_{s-1}+2}w_{l_{s}}w_{r_2}$ will form a
  $31254$ or $32154$-pattern, a contradiction.
 Hence, we deduce that $l_s=l_{s-1}+1$ or $l_s=l_{s-1}+2$.

  When $l_s=l_{s-1}+2$, one possibility is that there is no element  between $w_{l_s}$
  and $w_{r_2}$ larger than $w_{l_{s-1}}$. If not, assume that
  $k=\max\{j \colon w_j>w_{l_{s-1}} \text{~and~} l_{s-1}<j<r_2\}$.
  We claim that $w_j>w_{l_{s-1}}$ for $l_{s-1}<j\leq k$.
  Otherwise, assume to the contrary that  there exists an integer $o$ such that $l_{s-1}<o<k$ and
  $w_o<w_{l_{s-1}}$, then $w_{l_{s-1}}w_{l_{s-1}+1}w_o w_k w_{r_2}$
  will give an instance of $31245$ or $32145$. This contradicts with the fact that $w \in \W_n(31245,32145,31254,32154)$. The claim is verified and we complete the proof.
\end{proof}

Based on Lemma~\ref{prop:<}, we give the corresponding graphical descriptions in Fig.~\ref{fig:structure-w-I}.

\begin{figure}[!htbp]
\begin{center}
\begin{tikzpicture}[line width=0.5pt,scale=0.27]
\coordinate (O) at (0,0);
\coordinate (O1) at (-18,0);
\coordinate (O2) at (-37,0);
\fill[black!100] (O)circle(1ex)++(1,-4) circle(1ex)++(1,9.5) circle(1ex)++(1,-3) circle(1ex)++(5,2) circle(1ex);
\draw[pattern=north west lines] (3,0) rectangle (5,4.5);
\draw[pattern=north west lines] (5,-7) rectangle (8,0);

\path (-0.5,0.9)  node {\tiny{$w_{l_{s-1}}$}}
++(1,-4) node {\tiny{$w_{l_{s-1}+1}$}}
++(1.4,9.5) node {\tiny{$w_{l_{s}}=w_{l_{s-1}+2}$}}
++(6.5,-1) node {\tiny{$w_{r_2}$}}
++(-4,-14.5) node {\small{Type I-3}};

\fill[black!100] (O1)circle(1ex)++(1,-4) circle(1ex)++(1,9.5) circle(1ex)++(1,-3) ++(5,2) circle(1ex);
\draw[pattern=north west lines] (-16,-7) rectangle (-10,0);

\path (-18.5,0.9)  node {\tiny{$w_{l_{s-1}}$}}
++(0.4,-4) node {\tiny{$w_{l_{s-1}+1}$}}
++(2,9.5) node {\tiny{$w_{l_{s}}=w_{l_{s-1}+2}$}}
++(6.5,-1) node {\tiny{$w_{r_2}$}}
++(-4,-14.5) node {\small{Type I-2}};

\fill[black!100] (O2)++(1,0)circle(1ex)++(1,5.5) circle(1ex)++(1,-3) ++(5,2) circle(1ex);

\draw[pattern=north west lines] (-35,-7) rectangle (-29,4.5);

\path (-36.5,0.9)  node {\tiny{$w_{l_{s-1}}$}}
++(0.4,-4)
++(2,9.5) node {\tiny{$w_{l_{s}}=w_{l_{s-1}+1}$}}
++(5.5,-1) node {\tiny{$w_{r_2}$}}
++(-3,-14.5) node {\small{Type I-1}};

\end{tikzpicture}
\caption{The structure of a non-trivial $(31245,32145,31254,32154)$-avoiding word with $w_{l_{s-1}}<w_{r_2}$ .}
\label{fig:structure-w-I}
\end{center}
\end{figure}
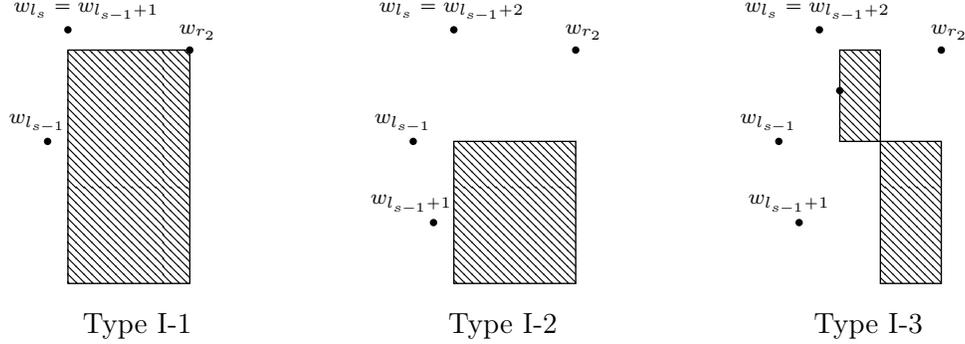

Similar derivation may lead us to the following proposition.
 Its corresponding graphical descriptions are presented in Fig.~\ref{fig:structure-w-II}, where the gray boxes represent
consecutive increasing sequences which might be empty.

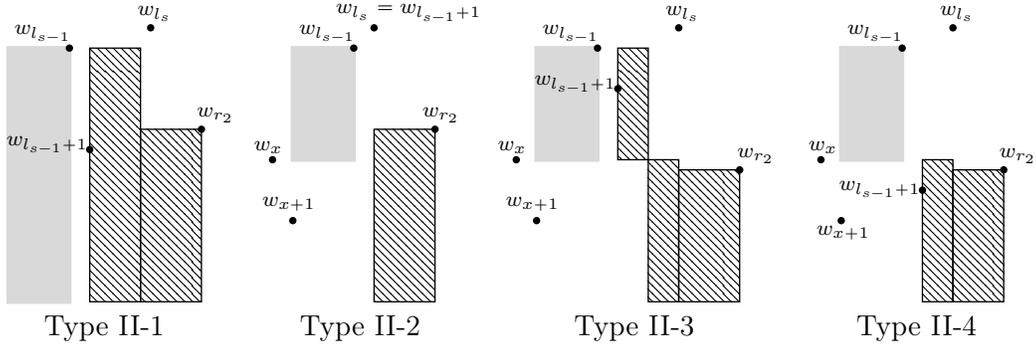
\begin{figure}[!htbp]
\begin{center}
\begin{tikzpicture}[line width=0.5pt,scale=0.27]
\coordinate (O) at (0,0);
\coordinate (O1) at (14,0);
\coordinate (O2) at (26,0);
\coordinate (O3) at (41,0);
\draw[pattern=north west lines] (7.5,-7) rectangle (10.5,1.5);
\draw[pattern=north west lines] (5,-7) rectangle (7.5,5.5);
\draw [fill=gray!30,ultra thick,gray!30] (1,-7) rectangle (4,5.5);

\fill[black!100] (O)++(4,5.5) circle(1ex)++(1,-5) circle(1ex)
++(3,6) circle(1ex)++(2.5,-5) circle(1ex);

\path (2.7,6.2)  node {\tiny{$w_{l_{s-1}}$}}
++(0.2,-5.5)node {\tiny{$w_{l_{s-1}+1}$}}
++(5.3,6.5) node {\tiny{$w_{l_s}$}}
++(3,-5) node {\tiny{$w_{r_2}$}}
++(-5.5,-10.5) node {\small{Type II-1}};

\draw[pattern=north west lines] (19,-7) rectangle (22,1.5);
\draw [fill=gray!30,ultra thick,gray!30] (15,0) rectangle (18,5.5);

\fill[black!100] (O1)circle(1ex)++(1,-3) circle(1ex)++(3,8.5) circle(1ex)++(1,1) circle(1ex)++(3,-5) circle(1ex);

\path (16.7,6.2)  node {\tiny{$w_{l_{s-1}}$}}
++(4,1) node {\tiny{$w_{l_s}=w_{l_{s-1}+1}$}}
++(1.6,-5) node {\tiny{$w_{r_2}$}}
++(-4,-10.5) node {\small{Type II-2}};

\path (13.8,0.6)  node {\tiny{$w_{x}$}}
++(1,-2.8) node {\tiny{$w_{x+1}$}};

\draw[pattern=north west lines] (31,0) rectangle (32.5,5.5);
\draw[pattern=north west lines] (32.5,-7) rectangle (34,0);
\draw[pattern=north west lines] (34,-7) rectangle (37,-0.5);
\draw [fill=gray!30,ultra thick,gray!30] (27,0) rectangle (30,5.5);

\fill[black!100] (O2)circle(1ex)++(1,-3) circle(1ex)++(3,8.5) circle(1ex)++(1,-2) circle(1ex)++(3,3) circle(1ex)++(3,-7) circle(1ex);

\path (28.7,6.2)  node {\tiny{$w_{l_{s-1}}$}}
++(0.2,-2.5)node {\tiny{$w_{l_{s-1}+1}$}}
++(5.3,3.5) node {\tiny{$w_{l_s}$}}
++(3.6,-7) node {\tiny{$w_{r_2}$}}
++(-6,-8.5) node {\small{Type II-3}};

\path (25.8,0.6)  node {\tiny{$w_{x}$}}
++(1,-2.8) node {\tiny{$w_{x+1}$}};

\draw [fill=gray!30,ultra thick,gray!30] (42,0) rectangle (45,5.5);
\draw[pattern=north west lines] (46,-7) rectangle (47.5,0);
\draw[pattern=north west lines] (47.5,-7) rectangle (50,-0.5);

\fill[black!100] (O3)circle(1ex)++(1,-3) circle(1ex)++(3,8.5) circle(1ex)++(1,-7)circle(1ex)
++(1.5,8) circle(1ex)++(2.5,-7) circle(1ex);

\path (43.7,6.2)  node {\tiny{$w_{l_{s-1}}$}}
++(0.2,-7.7) node {\tiny{$w_{l_{s-1}+1}$}}
++(3.8,8.7) node {\tiny{$w_{l_s}$}}
++(3,-7) node {\tiny{$w_{r_2}$}}
++(-5,-8.5) node {\small{Type II-4}};

\path (41.1,0.6)  node {\tiny{$w_{x}$}}
++(1,-4.2) node {\tiny{$w_{x+1}$}};
\end{tikzpicture}
\caption{The structure of a non-trivial $(31245,32145,31254,32154)$-avoiding word with $w_{l_{s-1}}>w_{r_2}$ and $s>2$ .}
\label{fig:structure-w-II}
\end{center}
\end{figure}

\begin{lemma}\label{prop:>}
Assume that $w$ is a non-trivial $\{31245,32145,31254,32154\}$-avoiding word of different letters with $w_{l_{s-1}}>w_{r_2}$, $l_s>s$ and $s>2$, let $x=\max(\{j \colon w_j>w_{j+1} ~\text{and}~j<l_{s-1} \}\cup\{0\})$.
Then there are four types:
\begin{itemize}
  \item[II-1.] $x=0$.
  \item[II-2.] $x \neq 0$ and $l_s=l_{s-1}+1$.
  \item[II-3.] $x \neq 0$, $l_s>l_{s-1}+1$ and there is an integer $k$,
  $$
  k=\max\{i: l_{s-1}<i<l_s, w_x<w_i<w_{s-1}\},
  $$
       such that $w_{l_{s-1}}>w_j>w_x$ for $l_{s-1}<j\leq k$
       and $w_j<w_x$ for $k<j< l_s$. (If $k<l_s-1$, then $w_x>w_{r_2}$.)
  \item[II-4.] $x \neq 0$, $l_s>l_{s-1}+1$ and $w_j<w_x$ with $l_{s-1}<j<l_s$. (This implies that $w_x>w_{r_2}$.)
\end{itemize}
\end{lemma}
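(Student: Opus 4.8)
The plan is to reduce everything to a single unifying observation and then repeat, in each case, the forbidden-quintuple style of argument already used for Lemma~\ref{prop:<}. The observation is that a word of distinct letters avoids $\{31245,32145,31254,32154\}$ if and only if it contains no five positions $i_1<i_2<i_3<i_4<i_5$ with $w_{i_2},w_{i_3}<w_{i_1}<w_{i_4},w_{i_5}$, because these four patterns are precisely the four ways of ordering the two ``small'' entries and the two ``large'' entries around a common pivot value. I would state this reformulation first, since it converts each structural claim into a search for such a forbidden quintuple and lets me ignore which of the four patterns actually appears.

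Next I would record the elementary facts forced by the hypotheses $w_{l_{s-1}}>w_{r_2}$, $l_s>s$, $s>2$. As $w_{l_{s-1}}$ and $w_{l_s}=\max(w)$ are consecutive left-to-right maxima, each $w_j$ with $l_{s-1}<j<l_s$ fails to be a left-to-right maximum and hence lies below $w_{l_{s-1}}$; and since $w_{r_2}=\max\{w_j:j>l_s\}<w_{l_{s-1}}$, every entry past position $l_s$ also lies below $w_{l_{s-1}}$, so $w_{l_s}$ is the only entry exceeding $w_{l_{s-1}}$ among positions $>l_{s-1}$. When $x\neq0$ I would note $w_x>w_{x+1}$, that $w_{x+1}<w_{x+2}<\cdots<w_{l_{s-1}}$ is the increasing run after the last descent, and that $w_x<w_{l_{s-1}}$ because $x<l_{s-1}$ and $w_{l_{s-1}}$ is a left-to-right maximum. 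The case split is then immediate: $x=0$ is Type II-1, and for $x\neq0$ comparing $l_s$ with $l_{s-1}+1$ separates Type II-2 from Types II-3 and II-4; all the real content lies in the fine structure between positions $l_{s-1}$ and $l_s$.

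The core claim, to be proved by exhibiting a forbidden quintuple, is that inside the range $l_{s-1}<j<l_s$ no entry below $w_x$ can precede an entry above $w_x$: if $w_{j_1}<w_x<w_{j_2}$ with $l_{s-1}<j_1<j_2<l_s$, then $w_x,w_{x+1},w_{j_1},w_{j_2},w_{l_s}$ is order-isomorphic to $31245$ or $32145$, a contradiction. This forces the entries of the range into an initial block exceeding $w_x$ followed by a block below $w_x$; when some entry exceeds $w_x$ the index $k=\max\{i:l_{s-1}<i<l_s,\ w_i>w_x\}$ is well defined and, combined with the already-noted bound $w_i<w_{l_{s-1}}$, yields exactly the monotone description of Type II-3, while if no entry exceeds $w_x$ we land in Type II-4. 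For the parenthetical statements I would argue again by a forbidden quintuple: if $w_j<w_x$ for some $k<j<l_s$ (or, in Type II-4, for any $l_{s-1}<j<l_s$) while $w_x<w_{r_2}$, then $w_x,w_{x+1},w_j,w_{l_s},w_{r_2}$ realizes $31254$ or $32154$, so $w_x>w_{r_2}$.

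The hard part is not any single deduction but the bookkeeping of boundary positions: I must check that the five chosen indices are genuinely distinct and increasing in the edge situations $x+1=l_{s-1}$ or when the intermediate block is a single entry, and I would observe that $s>2$ forces $l_{s-1}\geq2$ so that the descent index $x$ is meaningful (for $s=2$ one always has $x=0$, which is why that regime is excluded). To match Fig.~\ref{fig:structure-w-II} I would also remark that the same skeleton gives $w_{x+2},\ldots,w_{l_{s-1}}>w_x$ (otherwise $w_x,w_{x+1},w_{x+2},w_{l_{s-1}},w_{l_s}$ would be an instance of $31245$), so the increasing run between $w_{x+1}$ and $w_{l_{s-1}}$ sits entirely above $w_x$. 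Once the unifying reformulation is fixed, each individual step is a short pattern-matching check, and the only genuine care needed is verifying that the four types are simultaneously exhaustive and mutually exclusive.
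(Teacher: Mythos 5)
Your proof is correct and follows essentially the approach the paper intends: the paper omits the argument for this lemma, saying only that a ``similar derivation'' to that of Lemma~\ref{prop:<} applies, and your forbidden-quintuple checks (using $w_x,w_{x+1}$ as the two small entries and $w_{l_s}$ together with either $w_{j_2}$ or $w_{r_2}$ as the two large ones) are exactly that derivation carried out in full. Your unifying reformulation of the four patterns as a single pivot condition $w_{i_2},w_{i_3}<w_{i_1}<w_{i_4},w_{i_5}$ is a clean packaging of what the paper uses implicitly, and your boundary worry about $x+1=l_{s-1}$ is in fact vacuous since $w_{l_{s-1}}$ is a left-to-right maximum and hence cannot be preceded by a larger $w_x$.
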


To show that $\alpha$ is  well defined, we need the proposition below.

 \begin{proposition}\label{prop:alpha}
 Given a $(31245,32145,31254,32154)$-avoiding word of different letters $w$, denote $v=\alpha(w)$ and we have
 \begin{itemize}
   \item [1.] $\Vlrmax(w)=\Vlrmax(v)$.
   \item [2.] $w$ and $v$ have the same initial ascending run.
   \item [3.] $v$ avoids patterns in $\{31425,32415,31524,32514\}$.
   \end{itemize}
Consequently,  the map $\alpha$ is  well defined.
 \end{proposition}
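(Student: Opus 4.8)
The plan is to argue by induction on the length $n$ of $w$, following the recursive definition of $\alpha$ case by case. Since deleting any letter from a $\{31245,32145,31254,32154\}$-avoiding word preserves avoidance, every recursive call of $\alpha$ is made on a strictly shorter word in the same class, so the induction hypothesis applies to it. The base case $n=0$ is trivial. The three items are logically intertwined, so I would prove them simultaneously in each case. Throughout I would exploit one elementary but decisive observation: each of the four target patterns $31425,32415,31524,32514$ has its first four letters order-isomorphic to $3142$ or $3241$, so any word avoiding $\{3142,3241\}$ automatically avoids all four length-$5$ patterns. Dually, deleting the globally largest letter from any of these four patterns leaves a pattern in $\{3142,3241,3124,3214\}$; morally this is why the maximum letter is the only candidate for the ``top'' of a forbidden occurrence, and it confines the analysis to the place where that letter is inserted.

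Next I would dispose of the two boundary cases. If $\lrmax(w)=1$ then $w_1=\max(w)$ and $\alpha(w)=\max(w)\,\alpha(w_2\cdots w_n)$; items~1 and~2 are immediate, and for item~3 note that a letter that is simultaneously leftmost and largest can only play the leftmost role of a pattern, whereas the leftmost letter of each target pattern has rank $3$, so $\max(w)$ lies in no forbidden occurrence and avoidance follows from the induction hypothesis on $w_2\cdots w_n$. If $\lrmax(w)>1$ and $\rlmax(w)=1$ then $w_n=\max(w)$ and $\alpha(w)=\varphi(w_1\cdots w_{n-1})\max(w)$; here $w_1\cdots w_{n-1}$ avoids $\{3124,3214\}$ (else appending the global maximum would create a forbidden length-$5$ pattern), so $\varphi$ applies and, by Proposition~\ref{prop:var}, preserves $\Vlrmax$ and the initial ascending run, yielding items~1 and~2. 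For item~3, $\varphi(w_1\cdots w_{n-1})$ avoids $\{3142,3241\}$ and hence all four target patterns by the observation above; the only way the trailing maximum could join a forbidden occurrence is as the final ``$5$'' of $31425$ or $32415$, which would force a $3142$ or $3241$ among the preceding letters of $\varphi(w_1\cdots w_{n-1})$, a contradiction.

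This leaves the case $\lrmax(w)>1,\ \rlmax(w)>1$. In sub-case~(a), where $w$ begins with an ascending run ending at $\max(w)$, $\alpha$ inserts $\max(w)$ at position $l_s$ of the word obtained by deleting it; items~1 and~2 follow because the insertion reproduces the initial run $w_1\cdots w_{l_s-1}\max(w)$ of $w$, and item~3 follows because the letters of $v$ preceding $\max(w)$ are exactly this increasing prefix, which contains neither the ``$3\,1$''/``$3\,2$'' descent needed for $\max(w)$ to be the central ``$5$'' of $31524,32514$ nor the $3142/3241$ needed for it to be the final ``$5$'' of $31425,32415$. The genuinely delicate work is sub-case~(b) ($l_s>s$), where I would invoke the structural Lemmas~\ref{prop:<} and~\ref{prop:>} to split into the types I-1, I-2, I-3 (when $w_{l_{s-1}}<w_{r_2}$) and II-1, II-2, II-3, II-4 (when $w_{l_{s-1}}>w_{r_2}$), together with the boundary sub-case~(b3). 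For items~1 and~2 I would verify that the prescribed insertion point — just after the $(s-1)$-th or just before the $s$-th left-to-right maximum of $u$, according to whether $l_{s-1}+1$ equals $l_s$ — is precisely the position that reinstates $w_{l_s}$ (resp.\ $w_{l_{s-1}}$) as a left-to-right maximum while demoting the letters that had temporarily become left-to-right maxima after deletion, thereby restoring $\Vlrmax(w)$ and leaving the initial ascending run untouched.

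The main obstacle is item~3 in sub-case~(b). Since $u=\alpha(\cdot)$ avoids all four patterns by induction, any forbidden occurrence in $v$ must use the newly inserted letter, which is either $\max(w)$ (in~(b1)) or the second-largest left-to-right maximum $w_{l_{s-1}}$ (in~(b2),(b3)); being maximal or second-largest overall, it can only occupy the ``$4$'' or ``$5$'' position of a target pattern. I would rule this out type by type: the value comparisons recorded in Lemmas~\ref{prop:<} and~\ref{prop:>} — for instance that every letter strictly between $w_{l_{s-1}}$ and $w_{l_s}$ lies below $w_{l_{s-1}}$, and the placement of the letters below $w_x$ — pin down exactly which letters can sit to the left and to the right of the inserted one, and in each type at least one of the two letters required to complete a $31425/32415/31524/32514$ is shown to be absent or misordered. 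The same value data simultaneously confirm that the insertion disturbs no occurrence already excluded in $u$. Assembling these cases completes the induction, and since item~3 shows that $\alpha(w)\in\W_n(31425,32415,31524,32514)$, it establishes that $\alpha$ is well defined.
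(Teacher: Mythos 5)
Your proposal is correct and follows essentially the same route as the paper: induction on length, the same case split driven by the recursive definition of $\alpha$, the structural Lemmas~\ref{prop:<} and~\ref{prop:>} for the non-trivial cases, and the key reduction that any forbidden occurrence in $v$ must involve the newly inserted (largest or second-largest) letter, which is then eliminated type by type. Your explicit observation that the first four letters of each target pattern are order-isomorphic to $3142$ or $3241$ is a slightly cleaner packaging of the step the paper uses implicitly in the $\rlmax(w)=1$ case, but it does not change the argument.
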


 \begin{proof}
 We  proceed  by induction on the length of the word. When $w=\emptyset$,
 it certainly holds. Assume that this proposition  holds for
 words in $\W_{n-1}(31245,32145,31254,32154)$, we aim to verify that it holds for those of length $n$. Assume that $w=w_1 w_2 \cdots w_{n}$, we consider the following cases.

 If $\lrmax(w)=1$ and $\rlmax(w) \geq 1$, then $w_1=\max(w)$ and
 $v=w_1 \alpha(w_2 \cdots w_n)$. It is easy to check that  $\Vlrmax(w)=\Vlrmax(v)=\{w_1\}$ and $\Iar(w)=\Iar(v)$. By the induction hypothesis, $\alpha(w_2 \cdots w_n)$ is
 $(31425,32415,31524,32514)$-avoiding. Since $w_1$ can not play the role of $3$ in any pattern of length five, we deduce that
  $v$ avoids $\{31425,32415,31524,32514\}$.

If $\lrmax(w)>1$ and $\rlmax(w) = 1$, then $w_n=\max(w)$ and
        $\alpha(w)= \varphi(w_1 \cdots w_{n-1})w_n$.
        By Theorem~\ref{thm:fourplus}, we see that the map $\varphi$ keeps the initial ascending run and the statistic $\Vlrmax$.
        Thus, we deduce that items 1 and 2 hold for this case.
         Noticing that
        $w_n$ can only play the role of $5$ in a pattern of length five, but not in a pattern $31425$ or $32415$.
        The fact that
        $v$ avoids $\{31425,32415,31524,32514\}$ following from $\varphi(w_1 \cdots w_{n-1})$ avoids $\{3142,3241\}$ directly.

If $\lrmax(w)>1$, $\rlmax(w)>1$ and $l_s=s$,
it is trivial to check that $\Vlrmax(w)=\Vlrmax(v)=\{w_1,w_2, \ldots, w_{s}\}=\Iar(w)=\Iar(v)$.
Now we need to explain that  $v$ avoids $\{31425,32415,31524,32514\}$.
Based on the facts that
 $\max(v)$ can only play the role of $5$ in a pattern of
length five  and  there is no descents before $\max(w)$ in $v$,
we deduce that any instance of $v$ containing $\max(w)$ avoids
 $\{31425,32415,31524,32514\}$. In view of the fact that $\alpha(w_1 \cdots w_{l_s-1} w_{l_s+1} \cdots w_n)$
 avoids $\{31425,32415,31524,32514\}$
 by the induction hypothesis,  we obtain item 3 for this case.

 If $\lrmax(w)>1$, $\rlmax(w)>1$ and $l_s>s$,
 there are three subcases to be considered.
 \begin{itemize}
 \item  When $w_{l_{s-1}}<w_{r_2}$,  the word $w_1 \cdots w_{l_{s}-1}w_{l_{s}+1}\cdots w_n$ contains  at least $s$ left-to-right maxima,
  say $w_{l_1}, \ldots, w_{l_{s-1}},w_{r_2}$.
  And these elements remain to be the left-to-right maxima
  of $u$. Hence, the $(s-1)$-th and the $s$-th left-to-right
  maximum in the construction of $\alpha$ for this case  do exist.
Based on Lemma~\ref{prop:<}, it is routine to check items $1$ and $2$ through the induction  hypothesis and the construction of $\alpha$ in case~(b1). It  remains to prove item 3. Assume to the contrary that $v$ contains a pattern in $\{31425,32415,31524,32514\}$, we aim to deduce contradictions.
 By the induction hypothesis, we see that $u$ avoids $\{31425,32415,31524,32514\}$.
 So we need only focus on those instances containing the newly inserted element $w_{l_s}$ according to the three structure  types  in Fig.~\ref{fig:structure-w-I}.
 \begin{itemize}
 \item For type I-1, if $v_{g_1}v_{g_2}v_{g_3}v_{g_4}v_{g_5}$ forms a pattern $31425$ or $32415$ of $v$, then clearly $v_{g_4} \neq w_{l_{s-1}}$ and $v_{g_5}=w_{l_s}$.
 It follows that $v_{g_1}v_{g_2}v_{g_3}v_{g_4}w_{l_{s-1}}$ forms
 a pattern $31425$ or $32415$ of $u$, a contradiction.
 If $v_{g_1}v_{g_2}v_{g_3}v_{g_4}v_{g_5}$ forms a pattern $31524$ or $32514$ of $v$, then  we deduce that $v_{g_1}<w_{l_{s-1}}$, $v_{g_2} \neq w_{l_{s-1}}$ and $v_{g_3}=w_{l_s}$. This implies that
 $v_{g_1}v_{g_2}w_{l_{s-1}}v_{g_4}v_{g_5}$ forms a pattern in
 $\{31425,32415,31524,32514\}$ of $u$, a contradiction.
 \item For type I-2 (resp.~I-3), we may similarly check that any instance forming a pattern  in $\{31425,32415,31524,32514\}$ of $v$, which contains $w_{l_s}$, will
 correspond to an instance forming a pattern in $\{31425,32415,31524,32514\}$ of $u$ by changing $w_{l_s}$ into $w_{r_2}$ (resp.~$w_{l_s+1}$).
 This contradicts with the fact that $u$ is $(31425,32415,31524,32514)$-avoiding.
 \end{itemize}
 Thus, we complete the proof of this case.

 \item When  $w_{l_{s-1}}>w_{r_2}$ with $s>2$, items 1 and 2 are also obvious. We proceed to show item 3 according to the four structure  types in Fig.~\ref{fig:structure-w-II}. For type II-1, since these is no descent before $w_{l_{s-1}}$ in $v$,
 then no patterns in $\{31425,32415,31524,32514\}$ can be formed by $w_{l_{s-1}}$ in $v$. Thus, item 3 follows for type II-1.
 For the other cases, we consider two situations.
 \begin{itemize}
 \item
 If $x \neq l_{s-2}$, then we have $l_{s-1}=l_{s-2}+1$.
 By the construction of $\alpha$, $w_{l_{s-2}}$
and  $w_{l_{s-1}}$ are adjacent in $v$.
It can be verified that any instance forming a pattern  in $\{31425,32415,31524,32514\}$ of $v$, which contains $w_{l_{s-1}}$ ($w_{l_{s-1}}$ must play the role of $4$ or $5$),
 will correspond to an instance forming a pattern in $\{31425,32415,31524,32514\}$ of $u$ by changing $w_{l_{s-1}}$ into $w_{l_{s-2}}$, a contradiction.
 \item
 If $x=l_{s-2}$, then $l_{s-1}>l_{s-2}+1$.
 For types II-2 and II-4, $w_{l_{s-1}}$
and  $w_{l_{s}}$ are adjacent in $v$.
Moreover, $w_{l_{s-1}}$ is the second largest element only smaller than $w_{l_s}$.
Hence, any pattern we  concern containing $w_{l_{s-1}}$ in $v$ can be changed to a concerned pattern containing $w_{l_s}$, a contradiction.
 For type II-3,  $w_{l_{s-1}}$
and  $w_{l_{s-1}+1}$ are adjacent in $v$.
Similarly, any pattern in $\{31425,32415,31524,32514\}$  containing $w_{l_{s-1}}$ in $v$ can be changed to a  pattern in $\{31425,32415,31524,32514\}$ containing $w_{l_{s-1}+1}$, a contradiction.
\end{itemize}
These facts confirm item 3 for types II-2, II-3 and II-4.

 \item When  $w_{l_{s-1}}>w_{r_2}$ with $s=2$, we find that $w_1$ can not be in any instance forming a pattern in $\{31425,32415,31524,32514\}$. Based on this, it is plain  to
 see that items 1, 2 and 3 hold.
 \end{itemize}
 The proof of the proposition is now completed.
 \end{proof}

Furthermore, we have the following  properties  of $\alpha$.
 \begin{lemma}\label{vrmax:stat}
 The mapping $\alpha$ preserves the triple of statistics $(\Ides,\Vrlmax,\Vlrmin)$.
 \end{lemma}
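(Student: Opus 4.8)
The plan is to prove all three equalities simultaneously by induction on the length $n$, following verbatim the case division used to define $\alpha$. Proposition~\ref{prop:alpha} already supplies $\Vlrmax(w)=\Vlrmax(v)$ and the equality of initial ascending runs (hence of $\Iar$), so only $\Ides$, $\Vrlmax$ and $\Vlrmin$ remain, where $v=\alpha(w)$. The base case $n=0$ is trivial, and in each recursive branch I would read off from Lemmas~\ref{prop:<} and~\ref{prop:>} (and Fig.~\ref{fig:structure-w-I}, Fig.~\ref{fig:structure-w-II}) which distinguished letter is inserted or deleted and at which position, then compare the three statistics of $w$ and $v$ through the induction hypothesis applied to the shorter word on which $\alpha$ recurses.

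For $\Ides$ I would use the reformulation that, for a word of distinct letters, $i$ lies in $\Ides$ precisely when the letter of rank $i+1$ precedes the letter of rank $i$; thus $\Ides$ is determined by the relative left-to-right order of each pair of numerically consecutive letters, and it suffices to check that $\alpha$ never reverses such a pair. In the branch $\rlmax(w)=1$ this is immediate, since there $v=\varphi(w_1\cdots w_{n-1})\max(w)$ and $\varphi$ preserves $\Ides$ by Theorem~\ref{thm:fourplus}. In the other branches only the distinguished inserted letter can move relative to its numerical neighbours, and the order-preserving correspondences already exhibited in the proof of Proposition~\ref{prop:alpha} --- replacing $w_{l_s}$ by $w_{r_2}$ (or $w_{l_s+1}$) in the types of Lemma~\ref{prop:<}, and $w_{l_{s-1}}$ by $w_{l_{s-2}}$ or $w_{l_s}$ in the types of Lemma~\ref{prop:>} --- show that its position relative to its numerically adjacent letters is unchanged; the remaining pairs are handled by induction.

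For $\Vrlmax$ and $\Vlrmin$ I would first isolate a harmless-modification principle: if $e$ is a left-to-right maximum of $w$ that is neither the first letter nor the global maximum, then $e$ is neither a right-to-left maximum (the global maximum lies to its right) nor a left-to-right minimum, and deleting or reinserting $e$ at any position to the left of the global maximum changes neither $\Vrlmax$ nor $\Vlrmin$. This immediately settles branch (b2), where the letter removed and reinserted is the second-largest left-to-right maximum $w_{l_{s-1}}$. The two front-insertion branches ($\lrmax(w)=1$ and the branch $s=2$) are handled directly: prepending a letter adds it to $\Vlrmin$ as the new first letter, and adds it to $\Vrlmax$ exactly when it is the global maximum. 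For $\Vlrmin$ in the branches that reinsert the global maximum (cases (a) and (b1)) the same kind of argument applies, since the global maximum is never a left-to-right minimum away from the first position and never blocks a later letter from being one.

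The main obstacle is $\Vrlmax$ in the two branches (a) and (b1) that reinsert the global maximum $\max(w)=w_{l_s}$. Writing $\Vrlmax(u)=\{\max(u)\}\cup\Vrlmax(\rho)$ with $\rho$ the suffix strictly to the right of $\max(u)$, the task reduces to showing that the letters lying to the right of $\max(w)$ in $v$ form the same set, with the same right-to-left maxima, as those lying to the right of $\max(w)$ in $w$. This is precisely what the insertion position in the definition of $\alpha$ (``just after the $(s-1)$-th'' versus ``just before the $s$-th'' left-to-right maximum of $u$) is engineered to guarantee, and verifying it cleanly across the subtypes of Lemma~\ref{prop:<} --- where deleting $\max(w)$ promotes $w_{r_2}$ to a new left-to-right maximum --- is where the induction hypothesis on $\Vrlmax$, together with the already-established preservation of $\Vlrmax$ and of the initial ascending run, must be combined with care.
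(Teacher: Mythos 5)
Your proposal is correct and follows essentially the same route as the paper's proof: induction on the length along the case division of $\alpha$, reducing everything to the fact that the inserted letter is the largest or second largest element and that its insertion position keeps it on the correct side of its numerical neighbours and of the nearest right-to-left maximum. Your harmless-modification principle and the explicit reduction of $\Vrlmax$ to the suffix after the global maximum are just more detailed renderings of the paper's one-line observations, so the two arguments coincide in substance.
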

 \begin{proof}
 Given $w \in \W_n(31245,32145,31254,32154)$ and $v=\alpha(w)$, we aim to show
\begin{equation}\label{eq:stequ}
(\Ides,\Vrlmax,\Vlrmin)w=(\Ides,\Vrlmax,\Vlrmin)v.
\end{equation}
 When $w=\emptyset$, (\ref{eq:stequ}) certainly holds.
 Assume that it holds for the words of length $n-1$, we proceed to show that it also holds for $n$.
It can be  concluded that when we insert the largest or the second largest element in the process of $\alpha$, we never exchange their relative positions. Combining with the induction hypothesis, we see that $\Ides(w)=\Ides(v)$.

By the construction of $\alpha$, the insertion of the second largest element will have no effect on the right-to-left maxima, while the insertion of the largest element always keep relative positions with
its nearest right-to-left maximum if there is any.
Hence, by induction we deduce that $\Vrlmax(w)=\Vrlmax(v)$.

Based on the induction hypothesis,
the relation $\Vlrmin(w)=\Vlrmin(v)$ can be checked case by case according to the construction of $\alpha$ easily. This completes the proof of the lemma.
\end{proof}

By Lemma~\ref{vrmax:stat} and item~1 of Proposition~\ref{prop:alpha}, the mapping $\alpha$ preserves the pair of statistics $(\Vlrmax,\Vrlmax)$, which leads to the following observation.

\begin{Obser}\label{prop:relativeorderA}
 Given  a non-trivial word $w\in\W_n(31245,32145,31254,32154)$,  the relative order of $w_{l_{s-1}}, w_{l_s}$ and $w_{r_2}$ keeps unchanged after the mapping $\alpha$.
\end{Obser}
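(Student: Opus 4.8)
The plan is to derive the observation directly from the fact, just established, that $\alpha$ preserves both set-valued statistics $\Vlrmax$ and $\Vrlmax$. The key point to exploit is that each of these statistics records not merely how many maxima occur but exactly which values they are, and that these values come in a forced order: the left-to-right maxima of any word of distinct letters \emph{increase} in value as their positions increase, while the right-to-left maxima \emph{decrease} in value as their positions increase. Consequently the set $\Vlrmax(w)$ by itself determines which of its elements is the $i$-th largest, and likewise for $\Vrlmax(w)$. Note also that non-triviality gives $\lrmax(w)>1$ and $\rlmax(w)>1$, so $s\geq2$ and $t\geq2$ and the three elements $w_{l_{s-1}}$, $w_{l_s}$, $w_{r_2}$ are genuinely defined.

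First I would identify the three distinguished elements as ranked values. Writing $\Vlrmax(w)=\{w_{l_1}<w_{l_2}<\cdots<w_{l_s}\}$, the element $w_{l_s}=\max(w)$ is the largest letter of $w$, and $w_{l_{s-1}}$ is the second largest value of $\Vlrmax(w)$. Writing $\Vrlmax(w)=\{w_{r_1}>w_{r_2}>\cdots>w_{r_t}\}$ and recalling $w_{r_1}=w_{l_s}=\max(w)$, the element $w_{r_2}$ is the second largest value of $\Vrlmax(w)$.

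Next I would transport these identifications to $v=\alpha(w)$. Since $\Vlrmax(v)=\Vlrmax(w)$ as sets and the left-to-right maxima of $v$ are again increasing in value, the second-to-last left-to-right maximum of $v$ must equal the second largest value of $\Vlrmax(w)$, namely $w_{l_{s-1}}$; moreover $\max(v)=\max(w)=w_{l_s}$. In the same way $\Vrlmax(v)=\Vrlmax(w)$ forces the second right-to-left maximum of $v$ to equal $w_{r_2}$. Hence the three elements of $v$ playing the roles of second-to-last left-to-right maximum, maximum, and second right-to-left maximum carry precisely the values $w_{l_{s-1}}$, $w_{l_s}$, $w_{r_2}$, so their relative order is literally unchanged.

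The argument is essentially immediate once set-preservation is in hand, and I do not anticipate a serious obstacle. The only point deserving care is the remark that a set-valued statistic such as $\Vlrmax$ or $\Vrlmax$ pins down the actual values of its maxima together with their value-ranking; this is what lets ``the second-to-last left-to-right maximum'' and ``the second right-to-left maximum'' be recovered from the sets alone, with no case analysis over the branches (b1)--(b3) in the construction of $\alpha$.
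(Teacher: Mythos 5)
Your proposal is correct and is essentially the paper's own argument: the authors deduce the observation directly from the fact (Lemma~\ref{vrmax:stat} plus item~1 of Proposition~\ref{prop:alpha}) that $\alpha$ preserves the pair of set-valued statistics $(\Vlrmax,\Vrlmax)$, exactly as you do by noting that these sets, being forced to be increasing (resp.\ decreasing) in position, pin down which letters serve as $w_{l_{s-1}}$, $w_{l_s}=\max(w)$ and $w_{r_2}$ in the image. No further case analysis is needed, and your identification of the ranked values is the same one the paper relies on.
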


The above observation and the one below  are crucial in proving the bijectivity of $\alpha$.

\begin{Obser}\label{pro:consecutiveA}
Assume that $w$ is a non-trivial  $\{31245,32145,31254,32154\}$-avoiding word of different letters. We have
\begin{itemize}
  \item when $w_{l_{s-1}}<w_{r_2}$, $l_{s-1}+1=l_s$ if and only if
  $w_{l_{s-1}}$ and $w_{l_s}$  are  adjacent  in $\alpha(w)$.
  \item when $w_{l_{s-1}}>w_{r_2}$ and $s>2$, $l_{s-2}+1=l_{s-1}$ if and only if
  $w_{l_{s-2}}$ and $w_{l_{s-1}}$  are  adjacent  in $\alpha(w)$.
\end{itemize}
\end{Obser}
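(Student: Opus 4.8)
The plan is to prove Observation~\ref{pro:consecutiveA} by induction on the length $n$ of $w$, tracing through the recursive construction of $\alpha$. The two bullets concern exactly the pairs that are created when the recursion reinserts a distinguished letter: the \emph{maximum} $w_{l_s}$ (case~(b1), governing the pair $w_{l_{s-1}},w_{l_s}$ of the first bullet) and the \emph{second largest left-to-right maximum} $w_{l_{s-1}}$ (case~(b2), governing the pair $w_{l_{s-2}},w_{l_{s-1}}$ of the second bullet). Throughout I would lean on Proposition~\ref{prop:alpha} (so that $\Vlrmax(u)=\Vlrmax(w')$ for each recursive input $w'$, making the ``$j$-th left-to-right maximum of $u$'' referred to in the construction unambiguous and equal to the corresponding letter of $w'$), on Lemma~\ref{vrmax:stat} and Observation~\ref{prop:relativeorderA}, and crucially on the structural classifications of Lemmas~\ref{prop:<} and~\ref{prop:>}, which tell me the exact gap pattern between the relevant maxima.

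The forward directions are immediate from the insertion rule. If the two maxima straddling the reinserted letter sit at consecutive positions, we are in Type~I-1 (for case~(b1)) or in the branch $l_{s-2}+1=l_{s-1}$ (for case~(b2)); the construction then inserts the reinserted letter \emph{immediately after} the $(s-1)$-th, resp.\ $(s-2)$-th, left-to-right maximum of $u$, which is precisely $w_{l_{s-1}}$, resp.\ $w_{l_{s-2}}$, so the desired adjacency is produced directly.

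The reverse directions are the crux. When the positions fail to be consecutive we are, by Lemmas~\ref{prop:<} and~\ref{prop:>}, in Types~I-2 or I-3 (resp.\ in the branch $l_{s-2}+1<l_{s-1}$), and the reinserted letter is placed \emph{just before} an anchoring left-to-right maximum $M$ of $u$ -- with $M=w_{r_2}=\max(u)$ in Type~I-2, but $M=w_{l_s+1}\neq\max(u)$ in Type~I-3, and an analogous $M$ in case~(b2). Since $M$ immediately follows the reinserted letter, non-adjacency is equivalent to the statement that $w_{l_{s-1}}$ (resp.\ $w_{l_{s-2}}$) is \emph{not} the immediate predecessor of $M$ in $u$. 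I would establish this by using the structural lemmas to exhibit a non-left-to-right-maximum letter lying strictly between the two relevant consecutive maxima in the recursive input $w'$ (for instance the predecessor of $w_{r_2}$ in Type~I-2, or $w_{l_{s-1}+1}$ sitting just before $w_{l_s+1}$ in Type~I-3, where the position of $M$ in $w'$ is $l_{s-1}+2$), and arguing through the induction hypothesis and the preservation results that this separator persists in $u$.

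The main obstacle is precisely this reverse direction, and the reason it is subtle is that the naive blanket claim ``consecutive left-to-right maxima are adjacent in $\alpha(z)$ iff they occupy consecutive positions in $z$'' is \emph{false}: case~(b2) deliberately forces the second largest left-to-right maximum to be adjacent to the maximum even when their positions are far apart (this is the Type~II-2 and Type~II-4 analysis already carried out in the proof of Proposition~\ref{prop:alpha}). Consequently one cannot simply induct on a uniform adjacency-preservation statement; the induction hypothesis must be applied to the \emph{exact} consecutive pair $(w_{l_{s-1}},M)$ of $u$ -- which need not be the top pair of $u$ -- and in the branch where the anchor $M$ is itself $\max(u)$ one must descend to the pattern-avoidance structure, using that the predecessor of $M$ in the recursive input is a non-left-to-right-maximum, to rule out that predecessor being $w_{l_{s-1}}$ (resp.\ $w_{l_{s-2}}$). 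Correctly matching the type of the recursive input to the applicable construction case, and handling the $M=\max(u)$ branch by structure rather than by a direct appeal to the Observation itself, is where the real work of the proof lies; the base cases and the easy branches ($n=0$, $\lrmax(w)=1$, $\rlmax(w)=1$, case~(a), and case~(b3)) modify $w$ only at one end and leave the relevant neighbourhoods intact, so they are routine.
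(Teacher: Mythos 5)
Your overall architecture matches the paper's: the forward directions are read off the insertion rule, and the whole content of the statement is the reverse direction, which reduces to showing that the ``landmark'' left-to-right maximum ($w_{l_{s-1}}$, resp.\ $w_{l_{s-2}}$) is \emph{not} the immediate predecessor, in $u$, of the anchor $M$ before which the new letter is inserted. You also correctly diagnose why this is delicate: a blanket claim that consecutive left-to-right maxima stay adjacent under $\alpha$ is false (type II-4 breaks it for the top pair). But here is the genuine gap: having rejected the naive uniform statement, you never formulate the strengthened statement on which the induction can actually close. The separator you exhibit (e.g.\ $w_{l_{s-1}+1}$ in types I-2/I-3) sits right after $w_{l_{s-1}}$ in the \emph{recursive input} $\tilde w$, but what you need is that it still sits right after $w_{l_{s-1}}$ in $u=\alpha(\tilde w)$. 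The Observation's own induction hypothesis cannot deliver this, because $(w_{l_{s-1}},w_{l_{s-1}+1})$ is not the top (nor second-from-top) left-to-right-maximum pair of $\tilde w$ --- $\tilde w$ has at least $s$ left-to-right maxima --- and the preservation results you cite (Proposition~\ref{prop:alpha}, Lemma~\ref{vrmax:stat}, Observation~\ref{prop:relativeorderA}) preserve set-valued statistics and relative \emph{orders}, not positional adjacency. ``Descending to the pattern-avoidance structure'' of the recursive input does not help either, since $\alpha$ rearranges letters and the predecessor of $M$ in $\tilde w$ need not be its predecessor in $\alpha(\tilde w)$.

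What is missing is precisely the paper's Proposition~\ref{pro:desascA}: for \emph{every} index $i$ in a carefully calibrated range (up to $s-1$ for descent pairs $w_{l_i}w_{l_i+1}$, only up to $s-2$ for ascent pairs, and uniformly up to $s-2$ when $w$ is of type II-4), the pair $w_{l_i},w_{l_i+1}$ remains adjacent in $\alpha(w)$. This is the uniform-with-exceptions invariant that survives the recursion (its own inductive proof must check, case by case, whether the recursive input $\tilde w$ or $\hat w$ can itself be of type II-4), and the Observation then follows in one line: when the positions are not consecutive, the landmark is still followed in $u$ by the non-left-to-right-maximum separator, which differs from $M$. Until you state and prove such an invariant, the reverse direction of your argument is not justified.
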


We need to prove the following proposition, from which Observation~\ref{pro:consecutiveA}   follows immediately.

\begin{proposition}\label{pro:desascA}
Suppose that $w$ is a  $(31245,32145,31254,32154)$-avoiding word of different letters.
If 
$w$ is of type II-4 (resp.~other cases),
then
\begin{itemize}
  \item when $w_{l_i} w_{l_i+1}$ is a descent for $1 \leq i \leq {s-2}$ (resp.~ $1 \leq i \leq {s-1}$),  we have $w_{l_i}$ and $w_{l_i+1}$  remain  adjacent  in $\alpha(w)$.
  \item when $w_{l_i} w_{l_i+1}$ is an ascent for $1 \leq i \leq {s-2}$ (resp.~ $1 \leq i \leq {s-2}$),  we have $w_{l_i}$ and $w_{l_i+1}$  remain  adjacent  in $\alpha(w)$.
\end{itemize}
\end{proposition}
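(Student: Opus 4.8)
The plan is to induct on the length $n$ of $w$, following the recursive definition of $\alpha$ exactly as Proposition~\ref{prop:vardes} does for $\varphi$. The base case $w=\emptyset$ is vacuous, as are the degenerate branches $\lrmax(w)=1$ (there $s=1$ and both index ranges are empty). For the inductive step I would note that every branch builds $\alpha(w)$ by deleting one distinguished letter $m$, recursing to form $u=\alpha(w^\flat)$ on the shorter word $w^\flat$, and re-inserting $m$ at a prescribed spot: $m=\max(w)=w_{l_s}$ in the branches $\lrmax(w)>1,\rlmax(w)=1$, (a) and (b1); $m=w_{l_{s-1}}$ in branch (b2), i.e.\ types~II-1 through~II-4; and $m=w_1$ in branch (b3) and the branch $\lrmax(w)=1$. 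Accordingly I would split the tracked pairs $w_{l_i}w_{l_i+1}$ into the \emph{lower} pairs sitting at indices below the distinguished letter and the single \emph{top} pair adjacent to $m$.

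For the lower pairs, deleting $m$ leaves each such pair $w_{l_i}w_{l_i+1}$ as a genuine adjacency of $w^\flat$, still of the form (left-to-right maximum, successor) because $\alpha$ preserves $\Vlrmax$ by item~1 of Proposition~\ref{prop:alpha} and because Lemmas~\ref{prop:<} and~\ref{prop:>} pin down exactly which letters are left-to-right maxima; the induction hypothesis applied to $w^\flat$ then keeps these pairs adjacent in $u$. Since $m$ is always inserted immediately before or after one prescribed left-to-right maximum of $u$---located at the top, around the $(s-2)$-th, $(s-1)$-th or $s$-th left-to-right maximum---and the left-to-right maxima of $u$ coincide with those of $w^\flat$ (again item~1 of Proposition~\ref{prop:alpha}), the insertion is localized at the top and cannot separate any lower pair, which sits at a strictly smaller index.

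The top pair I would dispatch type by type, reading its fate directly off the insertion rule together with Figures~\ref{fig:structure-w-I} and~\ref{fig:structure-w-II}; this is also where the asymmetric index ranges get explained. In the branches with $m=\max(w)$ the insertion rule is tailored to reconnect $w_{l_{s-1}}$ with $\max(w)$ precisely when $l_{s-1}+1=l_s$ (type~I-1), matching Observation~\ref{pro:consecutiveA}. In branch (b2) the deleted letter is $w_{l_{s-1}}$, so whether the pair $w_{l_{s-1}}w_{l_{s-1}+1}$ at index $i=s-1$ stays adjacent is decided by the re-insertion of $w_{l_{s-1}}$ rather than by the induction hypothesis: reading Lemma~\ref{prop:>} type by type, in types~II-1, II-2 and~II-3 the insertion restores $w_{l_{s-1}}$ next to $w_{l_{s-1}+1}$ (so the descent claim, when non-vacuous, holds at $i=s-1$), whereas in type~II-4 the whole block below $w_x$ trailing $w_{l_{s-1}}$ migrates away under $\alpha$ and the pair genuinely splits---precisely why the descent range is trimmed to $1\le i\le s-2$ for type~II-4 but kept at $1\le i\le s-1$ in every other case. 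Dually, an ascent at $i=s-1$ means $w_{l_{s-1}}<w_{l_{s-1}+1}$, which forces $w_{l_{s-1}+1}$ to be a left-to-right maximum and hence $l_s=l_{s-1}+1$; the resulting topmost pair $w_{l_{s-1}}w_{l_s}$ is exactly the one whose adjacency Observation~\ref{pro:consecutiveA} and the construction settle, so it is deliberately omitted here, explaining why the ascent range always stops at $s-2$.

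I expect the genuine obstacle to be the bookkeeping in branches (b1) and (b2), where $m$ is inserted ``just before/after the $j$-th left-to-right maximum of $u$'': one must locate the successor $w_{l_i+1}$ inside $u$ relative to those left-to-right maxima in order to be sure the insertion re-creates the right adjacency for the top pair while disturbing nothing below it. Here I would lean on item~2 of Proposition~\ref{prop:alpha} (preservation of the initial ascending run), applied recursively to each floor, to control where successors of left-to-right maxima sit in $u$. Given the two structure lemmas this reduces to a finite, if tedious, verification across the seven types of Lemmas~\ref{prop:<} and~\ref{prop:>}, with no further conceptual input required.
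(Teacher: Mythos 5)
Your overall strategy -- induction on length following the recursive branches of $\alpha$, splitting the tracked pairs into those untouched by the deletion and the one involving the re-inserted letter -- is the same as the paper's. But there is a genuine gap precisely at the step you treat as routine: ``the induction hypothesis applied to $w^\flat$ then keeps these pairs adjacent in $u$.'' The induction hypothesis is \emph{conditional}: it yields the descent range $1\le i\le s'-1$ only when the shorter word is \emph{not} of type II-4, and the trimmed range $1\le i\le s'-2$ when it is (and the ascent range is always $s'-2$, where $s'=\lrmax(w^\flat)$). So for the topmost lower pair you cannot simply cite induction; you must either (i) prove that $w^\flat$ is not of type II-4, or (ii) show that $\lrmax(w^\flat)$ is large enough that the needed pair still falls inside the trimmed range. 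This is where almost all of the paper's work lies: for type I-2 it shows $\tilde w=w_1\cdots w_{l_s-1}w_{l_s+1}\cdots w_n$ cannot be of type II-4, since otherwise $w_xw_{x+1}w_{l_{s-1}+1}w_{l_s}w_{r_2}$ would be an occurrence of $31254$ or $32154$ in $w$; for type II-2 it shows $\hat w$ cannot be of type II-4 via an analogous $31245/32145$ contradiction; for types II-1 and II-3 with $x\neq l_{s-2}$ it argues that even if $\hat w$ is of type II-4 the pairs actually needed (including the pair $w_{l_{s-2}}$, $w_{l_{s-1}+1}$ of $\hat w$, which must be adjacent in $\alpha(\hat w)$ for the insertion to re-create \emph{both} top adjacencies) lie within the reduced range; and for type II-3 with $x=l_{s-2}$ it needs $\lrmax(\hat w)\ge s$ to cover an ascent at index $s-2$. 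None of these auxiliary claims appear in your sketch, and without them the argument does not close.

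A secondary point: you lean on Observation~\ref{pro:consecutiveA} to ``settle'' the adjacency of the topmost pair, but in the paper that observation is deduced \emph{from} Proposition~\ref{pro:desascA}; invoking it inside the proof would be circular. The adjacency of the top pair must instead be read off directly from the insertion rule together with the structure types of Lemmas~\ref{prop:<} and~\ref{prop:>} (which is what the paper does, and what your type-by-type paragraph essentially attempts).
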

\begin{proof}
We proceed to give the proof  by induction.
If $w=\emptyset$, Proposition \ref{pro:desascA} certainly holds.
Assume that it holds for the words  in $\W_{n-1}(31245,32145,31254,32154)$,
we aim to show that it  holds for such words with length $n$.
Cases
\begin{itemize}
\item when $\lrmax(w)=1$ and $\rlmax(w) \geq 1$
\item or when $\lrmax(w)>1$, $\rlmax(w) > 1$ and $l_s=s$
\item or when $w_{l_{s-1}}>w_{r_2}$ and $s=2$
\end{itemize}
can be verified easily. We focus on the remaining cases.

 If $\lrmax(w)>1$ and $\rlmax(w)=1$, then $\alpha(w)=\varphi(w_1 w_2 \cdots w_{n-1}) \max(w)$.
    By definition of the map $\varphi$, it is easy to check that
    if $w_{l_i} w_{l_i+1}$ is a descent (resp.~ascent) for $1 \leq i \leq {s-1}$ (resp.~$1 \leq i \leq s-2$) of $w_1 w_2 \cdots w_{n-1}$, then $w_{l_i}$ and $w_{l_i+1}$  remain  adjacent  in $\varphi(w)$, and hence in $\alpha(w)$. This completes the proof of this case.

 If $\lrmax(w)>1$, $\rlmax(w)>1$ and $l_s>s$, we distinguish  two cases.
\begin{itemize}
  \item When $w_{l_{s-1}}<w_{r_2}$, let $\tilde w=w_1 \cdots w_{l_{s}-1} w_{l_{s}+1} \cdots w_n$.
      Clearly, $w_{l_1}, w_{l_2}, \ldots, w_{l_{s-1}}$ are the first $s-1$ left to right maxima of $\tilde w$, and hence are also of $\alpha(\tilde w)$ by item $1$ in Proposition \ref{prop:alpha}.
      \begin{itemize}
        \item For type I-1, we have $\lrmax(\tilde w) \geq s$.
      By the induction hypothesis, elements $w_{l_i}$ and $w_{l_i+1}$ with
       $w_{l_i}>w_{l_i+1}$($1 \leq i \leq s-2$)
 or $w_{l_i}<w_{l_i+1}$($1 \leq i \leq s-2$)
      are adjacent in $\alpha(\tilde w)$. Notice that the inserting of $w_{l_s}$ brings no effect on this property and $w_{l_{s-1}}<w_{l_{s-1}+1}$. The proposition of this subcase follows.
        \item For type I-2, we claim that the word $\tilde w$ is not of type II-4. Otherwise,
      assume that $x=\max\{j \colon w_j> w_{j+1} \text{~~and~~} j<l_{s-1}\}$, we have $w_x>w_{l_{s-1}+1}$. This implies that $w_x w_{x+1} w_{l_{s-1}+1} w_{l_s} w_{r_2}$ forms a $31254$ or $32154$-pattern of $w$, a contradiction.
      The claim is verified. Then, by the induction hypothesis,
      elements $w_{l_i}$ and $w_{l_i+1}$ with $w_{l_i}>w_{l_i+1}$ ($1 \leq i \leq s-1$)
    or $w_{l_i}<w_{l_i+1}$ ($1 \leq i \leq s-2$)
      are adjacent in $\alpha(\tilde w)$, as well as in $\alpha(w)$. The proposition of this subcase is confirmed.
        \item For type I-3,  $\lrmax(\tilde w) \geq s+1$.
      Based on the induction hypothesis, we see that $w_{l_i}$ and $w_{l_i+1}$ with
      $w_{l_i}>w_{l_i+1}$($1 \leq i \leq s-1$)
     or $w_{l_i}<w_{l_i+1}$($1 \leq i \leq s-2$)
      are adjacent in $\alpha(\tilde w)$. Besides, the inserting of $w_{l_s}$ brings no effect on this property. The proposition of this subcase is verified.
      \end{itemize}
  \item When $w_{l_{s-1}}>w_{r_2}$ and $s>2$, let $\hat w=w_1 \cdots w_{l_{s-1}-1} w_{l_{s-1}+1} \cdots w_n$.
      Clearly, there are at least $s-1$ left to right maxima of
      $\hat w$, and the first $s-2$ ones are $w_{l_1}, \ldots, w_{l_{s-2}}$.
      \begin{itemize}
        \item  If $w$ is of type II-1 and $\hat w$ is of type II-4, then $w_{l_{s-2}}$ is at most the second largest element before $w_{l_s}$ in $\hat w$. This implies that in both cases, namely $\hat w$  is of type II-4 or not, we always have that
      elements $w_{l_{s-2}}$ and $w_{l_{s-1}+1}$,
       $w_{l_i}$ and  $w_{l_i+1}$ ($1 \leq i \leq s-3$)
      are adjacent in $\alpha(\hat w)$ by the induction hypothesis.
      Since $\alpha(w)$ is obtained  from $\alpha(\hat w)$ by inserting $w_{l_{s-1}}$ just after $w_{l_{s-2}}$, then $w_{l_{s-2}}$ and $w_{l_{s-1}}$, $w_{l_{s-1}}$ and $w_{l_{s-1}+1}$ are adjacent in $\alpha(w)$.
      The proposition for this subcase follows.
        \item If $w$ is of type II-2, we claim that $\hat w$ is not of type II-4. Otherwise, we deduce that $x=l_{s-2}$ and
       $w_y>w_{x+1}$ with $y=\max\{j \colon w_j> w_{j+1} \text{~~and~~} j<x\}$. This implies that $w_y w_{y+1} w_{x+1} w_{l_{s-1}} w_{l_s}$ forms a $31245$ or $32145$-pattern of $w$, a contradiction.
       Thus, the claim is verified. By the induction hypothesis,
      elements $w_{l_i}$ and $w_{l_i+1}$ with $w_{l_i}>w_{l_i+1}$ ($1 \leq i \leq s-2$)
    or $w_{l_i}<w_{l_i+1}$ ($1 \leq i \leq s-3$)
      are adjacent in $\alpha(\hat w)$.
      When $x = l_{s-2}$, then
      $w_{l_{s-1}}$ is inserted in $\alpha(\hat w)$ before $w_{l_{s}}$. This brings no effect on the properties above.
      When $x \neq l_{s-2}$,
      $w_{l_{s-1}}$ is inserted after $w_{l_{s-2}}$. Hence,
       $w_{l_{s-2}}$ and $w_{l_{s-1}}$ are adjacent in $\alpha(w)$.
     Combining all these facts, we complete the proof of this subcase.
        \item  If $w$ is of type II-3 and $x \neq l_{s-2}$,
        when $\hat w$ is of type II-4, $w_{l_{s-2}}$ is at most the
        second element before $w_{l_s}$ in
         $\hat w$. Thus, no matter
         $\hat w$ is of type II-4 or not, we have
         $w_{l_{i}}$ and $w_{l_{i}+1}$ with $1 \leq i \leq s-3$, $w_{l_{s-2}}$ and $w_{l_{s-1}+1}$ are adjacent in $\alpha(\hat w)$ by induction.
         Since $\alpha(w)$ is obtained by inserting $w_{l_{s-1}}$ after $w_{l_{s-2}}$,
         then $w_{l_{s-2}}$ and  $w_{l_{s-1}}$, $w_{l_{s-1}}$ and  $w_{l_{s-1}+1}$ are adjacent in $\alpha(w)$ respectively. The proposition for this situation holds.
        If $w$ is of type II-3 and $x = l_{s-2}$,
        then $w_{l_{s-2}}<w_{l_{s-1}+1}$ and    $\lrmax(\hat w) \geq s$.
        Both situations that $\hat w$ is of type II-4 or not,
        we always have
       $w_{l_i}$ and $w_{l_i+1}$ are adjacent
        with $1 \leq i \leq s-2$. Besides, $\alpha(w)$ is obtained by inserting  $w_{l_{s-1}}$ before $w_{l_{s-1}+1}$ in $\alpha(\hat w)$. Thus, $w_{l_{s-1}}$ and
         $w_{l_{s-1}+1}$ remain adjacent in $\alpha(w)$.
        The proposition of this subcase is verified.
  \item If $w$ is of type II-4 and $x=l_{s-2}$,
        then $\hat w$ is not of type II-4.
        Elements $w_{l_{i}}$ and $w_{l_{i}+1}$
            ($1 \leq i \leq s-2$) are adjacent in $\alpha(\hat w)$ by induction.
     As $\alpha(w)$ is obtained by inserting $w_{l_{s-1}}$
     before $w_{l_{s}}$ in $\alpha(\hat w)$,
     elements $w_{l_{i}}$ and $w_{l_{i}+1}$
            ($1 \leq i \leq s-2$) remain adjacent in $\alpha(w)$, as desired.
     If $w$ is of type II-4 and $x \neq l_{s-2}$,
     then  $w_{l_{i}}$ and $w_{l_{i}+1}$
            ($1 \leq i \leq s-3$) are adjacent in $\alpha(\hat w)$ by induction.
      Notice that $\alpha(w)$ is obtained by inserting $w_{l_{s-1}}$ after $w_{l_{s-2}}$. It follows that  $w_{l_{s-2}}$ and $w_{l_{s-1}}$ are adjacent in $\alpha(w)$. This verifies the last subcase.
      \end{itemize}
\end{itemize}

The proof of   this proposition is now complete.
\end{proof}

 In order to show that $\alpha$ is a bijection, we introduce its inverse $\beta$.

 {\bf The construction of $\beta$.} Given a word $v=v_1 v_2 \cdots v_n \in \W_n(31425,32415,31524,32514)$,
assume that $\Vlrmax(v)=\{v_{a_1}, v_{a_2},\ldots, v_{a_h} \}$ and
 $\Vrlmax(v)=\{v_{b_1}, v_{b_2},\ldots, v_{b_g} \}$ with $1=a_1<\cdots<a_h=b_1< \cdots <b_g=n$.
We construct $\beta(v)$ through the following cases:
\begin{itemize}
  \item If $n=0$, then define $\beta(\emptyset)=\emptyset$.

  \item If $\lrmax(v)=1$ and $\rlmax(v) \geq 1$, then
        $\beta(v)=\max(v) \beta(v_2 \cdots v_n)$.

  \item If $\lrmax(v)>1$ and $\rlmax(v) = 1$, then
        $\beta(v)= \psi(v_1 \cdots v_{n-1})\max(v)$.

  \item If $\lrmax(v)>1$ and $\rlmax(v)>1$, then we consider the following cases.
      \begin{itemize}
        \item[(a)] If $a_h=h$,
        then define $\beta(v)$ to be  the word obtained by inserting $\max(v)$ into $\beta(v_1 \cdots v_{a_h-1} v_{a_h+1} \cdots v_n)$ at position $a_h$.

        \item[(b)] If $a_h>h$,
        we consider three cases.
\begin{itemize}
       \item [(b1)] If $v_{a_{h-1}}<v_{b_2}$, then let $e=\beta(v_1 \cdots v_{a_h-1}v_{a_h+1} \cdots v_n).$
           When $a_{h-1}+1=a_h$,
           we construct $\beta(v)$ by inserting $v_{a_{h}}$ into $e$ just after the $(h-1)$-th left-to-right maximum of $e$.
           When $a_{h-1}+1<a_h$,
           we construct $\beta(v)$ by inserting $v_{a_{h}}$ into $e$  just after the element closely following the $(h-1)$-th left-to-right maximum of $e$.

           \item[(b2)] If $v_{a_{h-1}}>v_{b_2}$ and $h>2$,
           then denote  $e=\beta(v_1 \cdots v_{a_{h-1}-1}v_{a_{h-1}+1} \cdots v_n)$.
           When $a_{h-2}+1=a_{h-1}$,
           we construct $\beta(v)$ by inserting $v_{a_{h-1}}$ into $e$ just after the $(h-2)$-th left-to-right maximum of $e$.
          When $a_{h-2}+1<a_{h-1}$,
            insert $v_{a_{h-1}}$ into $e$ just after the
            element closely following the $(h-2)$-th left-to-right maximum of $e$ and we obtain $\beta(v)$.

          \item[(b3)] If $v_{a_{h-1}}>v_{b_2}$ and $h=2$,
           then $\beta(v)$ can be obtained by inserting
           $v_1$ at the beginning of $\beta(v_2 \cdots v_n)$.
           \end{itemize}
      \end{itemize}

\end{itemize}

 To show that $\beta$ is well defined and further the inverse of $\alpha$, we need the following proposition.
\begin{proposition}
Given $v$ in $\W_n(31425,32415,31524,32514)$,
we have
\begin{itemize}\label{prop:beta}
  \item[1.] $\Vlrmax(\beta(v))=\{v_{a_1}, v_{a_2},\ldots,$ $ v_{a_h} \}$.

  \item[2.]  $\beta(v)$ avoids patterns in $\{31245,32145,31254,32154\}$.
  \item[3.] If $v_{a_i} v_{a_i+1}$ is a descent (resp.~ascent) for $1 \leq i \leq {h-1}$ (resp.~$1 \leq i \leq h-2$), then $v_{a_i}$ and $v_{a_i+1}$  remain  adjacent  in $\beta(v)$.
\end{itemize}
Consequently,  the map $\beta$ is  well defined.
\end{proposition}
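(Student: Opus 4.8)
The plan is to prove all three items simultaneously by induction on $n=|v|$, following exactly the case structure used to define $\beta$; this mirrors the combined arguments of Proposition~\ref{prop:alpha} and Proposition~\ref{pro:desascA} for $\alpha$, now run in the reverse direction. The base case $v=\emptyset$ is immediate, and I would dispose of the three elementary cases first. When $\lrmax(v)=1$ the letter $v_1=\max(v)$ is prepended and can never serve as the $3$ (nor any later role) of a length-five forbidden pattern, so items~1--3 reduce at once to the inductive hypothesis applied to $v_2\cdots v_n$. When $\rlmax(v)=1$ we have $\beta(v)=\psi(v_1\cdots v_{n-1})\max(v)$; here I would invoke that $\psi=\varphi^{-1}$ preserves $\Vlrmax$, the initial ascending run, and descent-adjacency (as in Proposition~\ref{prop:vardes}), together with the fact that $\psi(v_1\cdots v_{n-1})$ avoids $\{3124,3214\}$, so that the appended maximum can only play the role of $5$ and cannot complete a $31245$, $32145$, $31254$ or $32154$. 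Finally the case $a_h=h$ inserts $\max(v)$ into an initial run of left-to-right maxima and is checked directly.

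The substance lies in the case $\lrmax(v)>1$, $\rlmax(v)>1$, $a_h>h$, that is, subcases (b1)--(b3). First I would record the structural description of $\{31425,32415,31524,32514\}$-avoiding words that is the exact counterpart of Lemmas~\ref{prop:<} and~\ref{prop:>}: according to the relative order of $v_{a_{h-1}}$, $v_{a_h}$ and $v_{b_2}$, the letters lying between the top two left-to-right maxima are confined to a small number of local shapes, so that the removal of $v_{a_h}$ (resp.~$v_{a_{h-1}}$) and its re-insertion at the position dictated by the construction of $\beta$ are unambiguous. Item~1 then follows because the removed letter is reinstated immediately after the designated left-to-right maximum of $e$, which by induction already carries the correct $\Vlrmax$ set; and item~3 (adjacency of the descent/ascent pairs among the $v_{a_i}$) follows by the same bookkeeping that proves Proposition~\ref{pro:desascA}, using that the insertion of the new letter does not separate any previously adjacent pair.

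Item~2 will be the main obstacle. Here I would argue by contradiction, exactly as in item~3 of Proposition~\ref{prop:alpha}: assuming $\beta(v)$ contained an occurrence of a pattern in $\{31245,32145,31254,32154\}$, this occurrence must involve the newly inserted letter, since by induction $e=\beta(\cdots)$ avoids these patterns. Using the structural classification together with Observation~\ref{prop:relativeorderA}, which pins down the relative order of $v_{a_{h-1}},v_{a_h},v_{b_2}$, I would show that replacing the inserted letter by an appropriate nearby left-to-right maximum (or by $v_{b_2}$) converts the occurrence into an occurrence of a forbidden pattern inside $e$, contradicting the inductive hypothesis. The case bookkeeping is delicate because the inserted letter may play the role of $1$, $2$, $4$ or $5$, and each of (b1), (b2), (b3) must be checked against each local shape from the structural lemma; the guiding principle is that the adjacency relations guaranteed by item~3 force any such occurrence to be a faithful shadow of one already present in $e$. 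Once item~2 is established, $\beta(v)\in\W_n(31245,32145,31254,32154)$, so $\beta$ is well defined, completing the induction.
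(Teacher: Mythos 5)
Your proposal follows essentially the same route as the paper's proof: induction on the length with the identical case split, the $\psi=\varphi^{-1}$ argument for the $\rlmax(v)=1$ case, structural lemmas for the non-trivial cases paralleling Lemmas~\ref{prop:<} and~\ref{prop:>}, and the contradiction-plus-replacement argument for item~2 in which the adjacency facts of item~3 are used to control where the inserted letter can sit inside a would-be forbidden occurrence. One minor correction that only shortens (and does not endanger) your case analysis: the inserted letter ($v_{a_h}$ or $v_{a_{h-1}}$) is a left-to-right maximum of $\beta(v)$ by item~1, so it can only play the role of $4$ or $5$ in a pattern from $\{31245,32145,31254,32154\}$, never $1$ or $2$.
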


\begin{proof}
We proceed to give the proof  by induction.
If $v=\emptyset$, Proposition \ref{prop:beta} certainly holds.
Assume that it holds for the words  in $\W_{n-1}\{31425,32415,31524,32514\}$,
we aim to show that it  holds for such words with length $n$.
We just present the proof of the cases below, while  the remaining three cases, namely
\begin{itemize}
\item when $\lrmax(v)=1$ and $\rlmax(v) \geq 1$
\item or when $\lrmax(v)>1$, $\rlmax(v) > 1$ and $a_h=h$
\item or when $v_{a_{h-1}}>v_{b_2}$ and $h=2$,
\end{itemize}
can be verified easily.

If $\lrmax(v)>1$ and $\rlmax(v) = 1$, then
        $\beta(v)= \psi(v_1 \cdots v_{n-1})\max(v)$.
        Item 1 is obvious by combining  $\psi=\varphi^{-1}$ and item 1 in Proposition \ref{prop:var}. Since $v$ avoids $\{31425,32415,31524,$ $32514\}$, we have $v_1 \cdots v_{n-1}$
        avoids $\{3142,3241\}$. It follows that $\psi(v_1 \cdots v_{n-1}) $ avoids $\{3214,$ $3124\}$, and hence $\beta(v)$
        avoids $\{31245,32145,31254,$ $32154\}$.
        As for item 3, it suffices to check that
        $y_i y_{i+1}$ remain consecutive in $\psi(y)$ for the
        left-to-right maximum $y_i$ with $y_i \neq \max(y)$.
        This is obvious in view of the construction of $\psi$ 
        and we obtain item 3.

If $\lrmax(v)>1$, $\rlmax(v) > 1$ and $a_h>h$, then we consider the following two cases.
\begin{itemize}
  \item If $v_{a_{h-1}}<v_{b_2}$, then let $e=\beta(v_1 \cdots v_{a_h-1}v_{a_h+1} \cdots v_n).$
      Clearly, $v_{a_1}, \ldots,$ $ v_{a_{h-1}}, v_{b_2}$ are left-to-right maxima of the word $v_1 \cdots v_{a_h-1}v_{a_h+1} \cdots v_n$.
      By the induction hypothesis, we have $\{v_{a_1}, \cdots,$ $ v_{a_{h-1}}, v_{b_2}\} \subseteq \Vlrmax(e)$. We distinguish the following two subcases.
      \begin{itemize}
     \item When $a_{h-1}+1=a_h$,
      $v_{a_{h}}$ is inserted  just after $v_{a_{h-1}}$ in $e$ and so $\Vlrmax(\beta(v))=\{v_{a_1}, \cdots,$ $ v_{a_{h-1}}, v_{a_h}\}$. For item 2, assume to the contrary that  $w_{x_1} w_{x_2} w_{x_3} w_{x_4} w_{x_5}$ in $\beta(v)$ forms a pattern in $\{31245,32145,31254,32154\}$.
      By  the induction hypothesis that $e$ avoids such patterns,  we have $w_{x_4}=v_{a_h}$ or $w_{x_5}=v_{a_h}$.
      If $w_{x_4}=v_{a_h}$, then $w_{x_1} w_{x_2} w_{x_3} v_{a_{h-1}} w_{x_5}$ forms a pattern in  $\{31245,32145,$ $31254,32154\}$ of $e$, a contradiction.
      If $w_{x_5}=v_{a_h}$, then $w_{x_1} w_{x_2} w_{x_3} w_{x_4} v_{b_2}$ forms a $31245$ or $32145$-pattern of $e$, a contradiction. Both cases  indicate that $\beta(v)$ avoids
      $\{31245,32145,31254,32154\}$.
      For item 3, noticing that $v_{a_h}$ is inserted after $v_{a_{h-1}}$ in $\beta(v)$, then it is obvious in view of the induction hypothesis.

    \item  When $a_{h-1}+1<a_h$, by induction we see that $v_{a_{h-1}}$ and $v_{a_{h-1}+1}$ remain adjacent in $e$, and hence
       $\beta(v)$ is obtained from $e$ by inserting $v_{a_{h}}$  just after $v_{a_{h-1}+1}$. Based on the induction hypothesis,
      it is plain  to see that  $\Vlrmax(\beta(v))=\{v_{a_1}, \cdots,$ $ v_{a_{h-1}}, v_{a_h}\}$.
      To prove item 2, we assume to the contrary that $\beta(v)$ contains $w_{x_1} w_{x_2} w_{x_3} w_{x_4} w_{x_5}$ a pattern  in $\{31245,32145,31254,32154\}$.
      Since $e$ avoids such patterns by  the induction hypothesis,  we have $w_{x_4}=v_{a_h}$ or $w_{x_5}=v_{a_h}$.
      If $w_{x_4}=v_{a_h}$, we note that $w_{x_3} \neq v_{a_{h-1}+1}$.
       Otherwise, consider the greatest $a_i$ with $a_i<a_{h-1}$ such that $v_{a_i}>v_{a_i+1}$. By the induction hypothesis on $e$, we must have $v_{a_i}\geq w_{x_1}$ in view of item~3. Thus,
      the subsequence $ v_{a_i}v_{a_i+1} v_{a_{h-1}} v_{a_{h-1}+1} v_{a_{h}}$ will
       form a $31425$ or $32415$-pattern of $v$, which contradicts with $v \in \W_n(31425,32415,31524,32514)$.
       It follows that $w_{x_3}$ must appear before $v_{a_{h-1}}$ in $\beta(v)$. This implies that $w_{x_1} w_{x_2} w_{x_3}  v_{a_{h-1}} w_{x_5}$ is a pattern in $\{31245,32145,31254,32154\}$ of $e$,
       a contradiction.
       If $w_{x_5}=v_{a_h}$, then $w_{x_1} w_{x_2} w_{x_3} w_{x_4} v_{b_2}$ forms a $31245$ or $32145$  pattern in $e$, a contradiction. Both cases imply item 2.
      Item 3 is obvious in view of the fact that
    the insertion of $v_{a_h}$ is after $v_{a_{h-1}+1}$ and  the induction hypothesis.
\end{itemize}
   \item If $v_{a_{h-1}}>v_{b_2}$ and $h>2$, then let $e=\beta(v_1 \cdots v_{a_{h-1}-1}v_{a_{h-1}+1}\cdots v_n).$
       It is clear that
       $\{v_{a_1}, \ldots,$ $ v_{a_{h-2}}, v_{a_h}\} \subseteq \Vlrmax(e)$ by the induction hypothesis. We further distinguish the following two subcases.
       \begin{itemize}
     \item When $a_{h-2}+1=a_{h-1}$,
      $v_{a_{h-1}}$ is inserted  just after $v_{a_{h-2}}$ in $e$. Similar discussions as the case when $v_{a_{h-1}}<v_{b_2}$ and $a_{h-1}+1=a_h$ can be used to show
      items~1 and 2 in this case.
      To prove item 3,  it suffices to show that $v_{a_{h-1}}$ and $v_{a_{h-1}+1}$ are adjacent in $\beta(v)$ if $a_{h-1}+1<a_{h}$. Whether  $v_{a_{h-2}}>v_{a_{h-1}+1}$ or $v_{a_{h-2}}<v_{a_{h-1}+1}$,
      we have $v_{a_{h-2}}$ and $v_{a_{h-1}+1}$ are adjacent in $e$ by
      induction hypothesis.
      Since $\beta(v)$ is obtained from $e$ by inserting $v_{a_{h-1}}$  just after $v_{a_{h-2}}$,
      item 3 for this case follows.

     \item When $a_{h-2}+1<a_{h-1}$,  by induction we see that $v_{a_{h-2}}$ and $v_{a_{h-2}+1}$ remain adjacent in $e$, and hence in $\beta(v)$. As
      $\beta(v)$ is obtained from $e$ by inserting $v_{a_{h-1}}$  just after $v_{a_{h-2}+1}$, we have $\Vlrmax(\beta(v))=\{v_{a_1}, \cdots,$ $ v_{a_{h-1}}, v_{a_h}\}$. To prove item 2,
      we assume to the contrary that  $w_{x_1} w_{x_2} w_{x_3} w_{x_4} w_{x_5}$ is a pattern in $\{31245,32145,31254,32154\}$ of $\beta(v)$. Since $e$ avoids such patterns by  the induction hypothesis, we deduce that
       $w_{x_4}=v_{a_{h-1}}$ or $w_{x_5}=v_{a_{h-1}}$.
      If $w_{x_4}=v_{a_{h-1}}$,  If $w_{x_4}=v_{a_{h-1}}$,  we may deduce that $w_{x_3}\neq v_{a_{h-2}+1}$ using the same discussions as in the case $v_{a_{h-1}}<v_{b_2}$ and $a_{h-1}+1<a_h$. Thus,
      $w_{x_3}$ appears before $v_{a_{h-2}}$ in $e$.
      It follows that
      $w_{x_1} w_{x_2} w_{x_3} v_{a_{h-2}} w_{x_5}$ will form a pattern in  $\{31245,32145,$ $31254,32154\}$ of $e$, a contradiction.
      If $w_{x_5}=v_{a_{h-1}}$, then $w_{x_1} w_{x_2} w_{x_3} w_{x_4} v_{a_h}$ forms a $31245$ or $32145$-pattern of $e$, a contradiction. Both cases  indicate that $\beta(v)$ avoids
      $\{31245,32145,31254,32154\}$.

      For item 3,  we consider two cases.
       If $a_{h-1}+1=a_h$, the insertion of  $v_{a_{h-1}}$ after
       $v_{a_{h-2}+1}$ will bring no effect on descents (resp.~ascents)
       beginning with a left-to-right maximum less than $v_{a_{h-1}}$ (resp.~$v_{a_{h-2}}$) in $e$. By the induction  hypothesis, item 3 follows.
       If $a_{h-1}+1<a_h$, then  $v_{a_{h-1}+1}>v_{a_{h-2}}$.
       Otherwise, $v_{a_{h-2}} v_{a_{h-2}+1} v_{a_{h-1}} v_{a_{h-1}+1} v_{a_{h}}$ will form a $31425$ or $32415$-pattern of $v$.
       We claim that $v_{a_{h-2}+1}$ and $v_{a_{h-1}+1}$
       are adjacent in $e$. Assume to the contrary, if there is an element $x$ between them, then $v_{a_{h-2}} v_{a_{h-2}+1} x  v_{a_{h-1}+1} v_{a_h}$ forms a $31245$ or $32145$-pattern of $e$. This contradicts with the induction hypothesis. Thus, the claim is verified. It follows that $v_{a_{h-1}}v_{a_{h-1}+1}$
       remains a descent of $\beta(v)$.
       Finally, it is plain to  see that
       the insertion of  $v_{a_{h-1}}$ after
       $v_{a_{h-2}+1}$ brings no effect on descents (resp.~ascents)
       beginning with a left-to-right maximum less than $v_{a_{h-1}}$ (resp.~$v_{a_{h-2}}$) in $e$. We compete the proof of item 3.
\end{itemize}
  \end{itemize}

The proof of this proposition is completed.
\end{proof}

To show that $\beta$ and $\alpha$ are inverses of each other, we need further explore the property of $\beta$ and analyze the structure of the words in $\W_n(31425,32415,31524,32514)$.
 Given such a word $v$, we focus on the  non-trivial cases when
 $\lrmax(v)>1$, $\rlmax(v)>1$ and $a_h>h$ (i.e., case~(b) in the construction of $\beta$). The next observation for $\beta$ is parallel to Observation~\ref{prop:relativeorderA}  for $\alpha$.

\begin{Obser}\label{prop:relativeorderB}
Given a non-trivial word $v\in\W_n(31425,32415,31524,32514)$,
the relative order of $v_{a_{h-1}}, v_{a_h}$ and $v_{b_2}$ remains the same after the mapping $\beta$.
\end{Obser}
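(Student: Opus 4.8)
The plan is to reproduce the derivation of Observation~\ref{prop:relativeorderA}, where the conclusion for $\alpha$ was extracted from the single fact that $\alpha$ preserves the pair of set-valued statistics $(\Vlrmax,\Vrlmax)$. Observe first that the three distinguished entries of a non-trivial $v$ are pinned down purely by these two sets: since left-to-right maxima increase and right-to-left maxima decrease when read from left to right, $v_{a_h}=\max(v)$ is the largest element of both $\Vlrmax(v)$ and $\Vrlmax(v)$, $v_{a_{h-1}}$ is the second largest element of $\Vlrmax(v)$, and $v_{b_2}$ is the second largest element of $\Vrlmax(v)$. Hence it suffices to prove that $\beta$ preserves $(\Vlrmax,\Vrlmax)$: once $\Vlrmax(\beta(v))=\Vlrmax(v)$ and $\Vrlmax(\beta(v))=\Vrlmax(v)$, the three entries singled out by the structure of $\beta(v)$ carry exactly the same three values as those singled out by the structure of $v$, so their relative order is unchanged.

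The preservation $\Vlrmax(\beta(v))=\Vlrmax(v)$ is item~1 of Proposition~\ref{prop:beta}, so the only thing left is $\Vrlmax(\beta(v))=\Vrlmax(v)$, which I would establish by induction on $n$ following the template of the right-to-left-maximum part of the proof of Lemma~\ref{vrmax:stat}. The point is that in every branch of the construction of $\beta$ one of exactly two elements is (re)inserted into a recursively built word: either the global maximum $\max(v)$ (the prepend, append, (a) and (b1) branches) or the second largest left-to-right maximum $v_{a_{h-1}}$ (the (b2) and (b3) branches). Inserting $v_{a_{h-1}}$ is harmless for right-to-left maxima: because $\max(v)$ already lies to its right, $v_{a_{h-1}}$ is never a right-to-left maximum and its insertion alters the right-to-left-maximum status of no other entry, so the set $\Vrlmax$ is untouched. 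For the insertion of $\max(v)$, the construction of $\beta$ places it adjacent to---that is, keeping its relative position with---its nearest right-to-left maximum, exactly as in Lemma~\ref{vrmax:stat}, so that adjoining $\max(v)$ to the right-to-left-maximum set of the recursively built word reproduces $\Vrlmax(v)$.

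The delicate point, and the main obstacle, is the bookkeeping for the branches in which $\max(v)$ is deleted before recursing and then reinserted. Deleting the global maximum from $v$ can expose new right-to-left maxima---entries lying to the left of $\max(v)$ in $v$ that were previously blocked by it---so the induction hypothesis is applied to a word whose right-to-left-maximum set genuinely differs from $\Vrlmax(v)$. I must check, case by case, that the position mandated by the construction (position $a_h$ in branch~(a); immediately after the $(h-1)$-th left-to-right maximum, or after the entry following it, in branch~(b1)) reinserts $\max(v)$ precisely so that these newly exposed right-to-left maxima revert to non-right-to-left maxima while $\max(v)$ together with $v_{b_2},\ldots,v_{b_g}$ become exactly the right-to-left maxima of $\beta(v)$. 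This verification rests on the structural descriptions of $\W_n(31425,32415,31524,32514)$ underlying the definition of $\beta$ and is the exact analogue of the computation carried out for $\alpha$ in Lemma~\ref{vrmax:stat}; once it is in place, $\beta$ preserves $(\Vlrmax,\Vrlmax)$ and the observation follows.
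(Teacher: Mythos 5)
Your reduction is sound: since left-to-right maxima increase and right-to-left maxima decrease along a word, $v_{a_{h-1}}$, $v_{a_h}$, $v_{b_2}$ are determined by $(\Vlrmax,\Vrlmax)$ as you say, and preservation of that pair would force the stated positional order. But your proof then rests entirely on the claim $\Vrlmax(\beta(v))=\Vrlmax(v)$, and you do not prove it --- you explicitly defer ``the delicate point, and the main obstacle,'' namely the branches (a) and (b1) in which $\max(v)$ is deleted, the recursion is applied to a word whose right-to-left maxima genuinely differ from those of $v$ (e.g.\ $v_{a_{h-1}}$ becomes a right-to-left maximum of the shortened word whenever $v_{a_{h-1}}>v_{b_2}$), and $\max(v)$ is then reinserted. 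Declaring that ``I must check, case by case'' that the reinsertion reblocks the newly exposed maxima and lands to the left of $v_{b_2}$ is precisely the content that a proof has to supply; without it the argument is incomplete. The appeal to Lemma~\ref{vrmax:stat} does not close this hole, because that lemma is a statement about $\alpha$, and at this stage of the development one cannot yet invoke $\beta=\alpha^{-1}$ (Proposition~\ref{prop:bij} is proved \emph{after}, and using, the present observation). Moreover your description of the mechanism is not literally what $\beta$ does: in branch (b1) the landmark for inserting $\max(v)$ is the $(h-1)$-th \emph{left}-to-right maximum of $e$ (or the entry following it), not a nearest right-to-left maximum, so verifying that the insertion point lies left of $v_{b_2}$ and right of any newly exposed right-to-left maxima requires the structural Lemmas~\ref{prop:<B} and~\ref{prop:>B} together with items~1 and~3 of Proposition~\ref{prop:beta}; that verification essentially contains the hard half of the observation itself.

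For comparison, the paper avoids this stronger claim altogether. When $v_{a_{h-1}}<v_{b_2}$ it uses only item~1 of Proposition~\ref{prop:beta}: since $\Vlrmax(\beta(v))=\{v_{a_1},\ldots,v_{a_h}\}$ and $v_{b_2}$ exceeds every left-to-right maximum except $v_{a_h}$, the entry $v_{b_2}$ would itself be a left-to-right maximum of $\beta(v)$ unless it sits to the right of $v_{a_h}$. When $v_{a_{h-1}}>v_{b_2}$ it argues directly by induction on the observation: only $v_{a_{h-1}}$ is removed and reinserted, its landmark (the $(h-2)$-th left-to-right maximum of $e$ or the entry following it, which by item~3 cannot be $v_{a_h}$) lies left of $v_{a_h}$, and induction keeps $v_{a_h}$ left of $v_{b_2}$. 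If you want to keep your route, you must actually carry out the (a)/(b1) bookkeeping; otherwise the short argument just described is both complete and considerably lighter.
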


\begin{proof}
This observation follows immediately from item $1$ in Proposition~\ref{prop:beta} when
 $v_{a_{h-1}}<v_{b_2}$.
 We proceed to show that it
 also holds for $v\in\W_n(31425,32415,31524,32514)$ with $v_{a_{h-1}}>v_{b_2}$.

When $h>2$, let $e=\alpha(v_1\cdots v_{a_{h-1}-1} v_{a_{h-1}+1}\cdots v_n)$. Notice that there are at least $h-2$ left to right maxima before
$v_{a_h}$ in $v_1\cdots v_{a_{h-1}-1} v_{a_{h-1}+1}\cdots v_n$. Through
item $1$ in Proposition~\ref{prop:beta}, we see that $v_{a_h}$ is at least the $(h-1)$-th left to right maxima of $e$.
If $a_{{h-2}}+1=a_{h-1}$, then $v_{a_{h-1}}$ is inserted in $e$
just after $v_{a_{h-2}}$. If $a_{{h-2}}+1<a_{h-1}$, then
$v_{a_{h-1}}$ is inserted in $e$
just after the element closely following
$v_{a_{h-2}}$. In view of item 3 in Proposition~\ref{prop:beta},
we see that the element can  not be $v_{a_h}$.
In each case, $v_{a_{h-1}}$ is inserted before $v_{a_h}$.
By induction, we see that $v_{a_h}$ is always to the left of $v_{b_2}$.
The observation for this case is verified.

 When $h=2$,  $v_{a_{h-1}}=v_1$ is inserted at the beginning of $\beta(v_2 \cdots v_n)$.
Further, by induction, $v_{a_h}$ is to the left of $v_{b_2}$ in $\beta(v_2 \cdots v_n)$. Combining these two properties, the observation for this case is verified and  the proof is complete.
\end{proof}


The next two lemmas analyze the structures of non-trival
$(31425,32415,31524,32514)$-avoiding words of different letters. Their proofs are straightforward using discussions similar to that in Lemma~\ref{prop:<}, which are omited.
\begin{lemma}\label{prop:<B}
Assume that $v$ is a non-trivial $(31425,32415,31524,32514)$-avoiding word of different letters with $v_{a_{h-1}}<v_{b_2}$ and $a_h>h$,  there are totally three types:
\begin{itemize}
  \item[A-1.]  $a_h=a_{h-1}+1$.
  \item[A-2.]  $a_h>a_{h-1}+1$ and $b_2 = a_h+1$.
  \item[A-3.]  $a_h>a_{h-1}+1$, $b_2>a_h+1$ and $v_j>v_{a_{h-1}}$ for $a_h<j<b_2$.
 \end{itemize}
\end{lemma}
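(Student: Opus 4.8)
The plan is to mirror the proof of Lemma~\ref{prop:<}, organizing the argument as a cascade of case distinctions on the two gaps $a_h-a_{h-1}$ and $b_2-a_h$, with essentially all of the content concentrated in a single pattern-avoidance claim. Recall that $a_h=b_1$, so $v_{a_h}=\max(v)$; together with the hypothesis $v_{a_{h-1}}<v_{b_2}$ this gives the chain $v_{a_{h-1}}<v_{b_2}<v_{a_h}$ that I will use repeatedly.

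First I would dispose of the trivial case distinctions. If $a_h=a_{h-1}+1$ we are in type A-1 and there is nothing to prove. Otherwise $a_h>a_{h-1}+1$, and I split on the position of $v_{b_2}$: if $b_2=a_h+1$ we are in type A-2, again with no further condition to verify. The entire substance therefore lies in the remaining case $a_h>a_{h-1}+1$ and $b_2>a_h+1$, where I must \emph{prove} the defining property of type A-3, namely that $v_j>v_{a_{h-1}}$ for every $j$ with $a_h<j<b_2$.

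For that claim I would argue by contradiction. Suppose some $j$ with $a_h<j<b_2$ satisfies $v_j<v_{a_{h-1}}$. Because $a_h>a_{h-1}+1$, there is a position $m$ with $a_{h-1}<m<a_h$; as $m$ lies strictly between two consecutive left-to-right maxima, $v_m$ is not a left-to-right maximum and hence $v_m<v_{a_{h-1}}$. Now inspect the five entries $v_{a_{h-1}}\,v_m\,v_{a_h}\,v_j\,v_{b_2}$ read at the increasing positions $a_{h-1}<m<a_h<j<b_2$. Their relative order is governed by $v_m,v_j<v_{a_{h-1}}<v_{b_2}<v_{a_h}$, so $v_{a_h}$ is the largest, $v_{b_2}$ the second largest, $v_{a_{h-1}}$ the median, and $v_m,v_j$ the two smallest. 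Consequently this subword is order-isomorphic to $31524$ when $v_m<v_j$ and to $32514$ when $v_m>v_j$, contradicting $v\in\W_n(31425,32415,31524,32514)$. This forces $v_j>v_{a_{h-1}}$ throughout $a_h<j<b_2$, completing type A-3.

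The one point deserving care—and the main contrast with Lemma~\ref{prop:<}—is that here the gap $a_h-a_{h-1}$ between the last two left-to-right maxima is \emph{not} bounded. In the source class the analogous subword $v_{a_{h-1}}v_{a_{h-1}+1}v_{a_{h-1}+2}v_{a_h}v_{b_2}$ would be order-isomorphic to $31254$ or $32154$, which is forbidden there but \emph{permitted} in the target class; this is precisely why types A-2 and A-3 allow arbitrarily many entries between $v_{a_{h-1}}$ and $v_{a_h}$, so that no bound on this gap should be attempted. The only remaining routine check is that the exhibited subword genuinely realizes the asserted patterns, which amounts to verifying the chain $v_m,v_j<v_{a_{h-1}}<v_{b_2}<v_{a_h}$ recorded above; I expect the selection of the intermediate entry $v_m$ and the correct reading off of $31524$ versus $32514$ to be the only place where any thought is required.
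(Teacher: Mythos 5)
Your proof is correct, and it is exactly the kind of argument the paper has in mind: the paper omits the proofs of Lemmas~\ref{prop:<B} and~\ref{prop:>B}, stating only that they follow by ``discussions similar to that in Lemma~\ref{prop:<}'', and your contradiction via the subword $v_{a_{h-1}}v_mv_{a_h}v_jv_{b_2}$ realizing $31524$ or $32514$ is precisely that analogue. The case split is exhaustive (since $b_2>b_1=a_h$ forces $b_2\geq a_h+1$), and your remark that no bound on $a_h-a_{h-1}$ should be sought correctly identifies the structural difference from the source class.
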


Based on Lemma~\ref{prop:<B}, we give the corresponding graphical descriptions in Fig.~\ref{fig:structure-v-A}.

\begin{figure}[!htbp]
\begin{center}
\begin{tikzpicture}[line width=0.5pt,scale=0.27]
\coordinate (O) at (0,0);
\coordinate (O1) at (-18,0);
\coordinate (O2) at (-37,0);
\fill[black!100] (O)circle(1ex)++(1,-4) circle(1ex)++(3,9.5) circle(1ex)++(1,-3) circle(1ex)++(2,2) circle(1ex);
\draw[pattern=north west lines] (5,0) rectangle (7,4.5);
\draw[pattern=north west lines] (1,-7) rectangle (4,0);

\path (-0.5,0.9)  node {\tiny{$v_{a_{h-1}}$}}
++(-0.7,-4) node {\tiny{$v_{a_{h-1}+1}$}}
++(4.8,9.5) node {\tiny{$v_{a_{h}}$}}
++(3.5,-1) node {\tiny{$v_{b_2}$}}
++(-4,-2.5) node {\tiny{$v_{a_{h}+1}$}}
++(0,-12) node {\small{Type A-3}};

\fill[black!100] (O1)circle(1ex)++(1,-4) circle(1ex)++(4,9.5) circle(1ex)++(1,-1) circle(1ex);
\draw[pattern=north west lines] (-17,-7) rectangle (-13,0);

\path (-18.5,0.9)  node {\tiny{$v_{a_{h-1}}$}}
++(-0.6,-4) node {\tiny{$v_{a_{h-1}+1}$}}
++(6,9.5) node {\tiny{$v_{a_{h}}$}}
++(2,-1.5) node {\tiny{$v_{b_2}$}}
++(-4,-14) node {\small{Type A-2}};

\fill[black!100] (O2)++(1,0)circle(1ex)++(1,5.5) circle(1ex)++(1,-3) ++(5,2) circle(1ex);

\draw[pattern=north west lines] (-35,-7) rectangle (-29,4.5);

\path (-36.5,0.9)  node {\tiny{$v_{a_{h-1}}$}}
++(0.4,-4)
++(2,9.5) node {\tiny{$v_{a_{h}}=v_{a_{h-1}+1}$}}
++(5.5,-1) node {\tiny{$v_{b_2}$}}
++(-3,-14.5) node {\small{Type A-1}};

\end{tikzpicture}
\caption{The structure of a non-trivial $(31425,32415,31524,32514)$-avoiding word with $v_{a_{h-1}}<v_{b_2}$ .}
\label{fig:structure-v-A}
\end{center}
\end{figure}
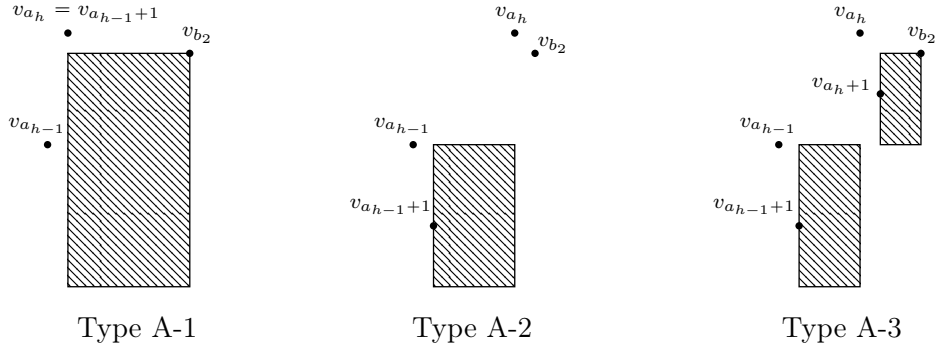

\begin{lemma}\label{prop:>B}
Assume that $v$ is a non-trivial $\{31425,32415,31524,32514\}$-avoiding word of different letters with $v_{a_{h-1}}>v_{b_2}$, $a_h>h$ and $h>2$, let $x=\max(\{ j \colon v_j> v_{j+1} \text{~and~} j<a_{h-1}\}\cup\{0\})$.
 There are totally three types:
\begin{itemize}
  \item[B-1.]  $x=0$.
  \item[B-2.]  $x \neq 0$ and $a_h=a_{h-1}+1$.
  Further, if $v_{b_2}>v_x$, then $v_{b_2}>v_j>v_x$ for $b_2>j>a_h$.
  \item[B-3.]   $x \neq 0$, $a_h>a_{h-1}+1$
  and $v_{a_{h-1}}>v_j>v_x$ with $a_{h}>j>a_{h-1}$. Further, if $v_{b_2}>v_x$, then $v_{b_2}>v_j>v_x$ for $b_2>j>a_h$.
 \end{itemize}
\end{lemma}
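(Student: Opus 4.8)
The plan is to give a direct, non-inductive argument paralleling the proof of Lemma~\ref{prop:<}, the only real work being a handful of forbidden-pattern extractions. First I would record the ambient structural facts. Since $v_{a_h}=\max(v)$ and $v_{a_{h-1}},v_{a_h}$ are consecutive left-to-right maxima, every $v_j$ with $a_{h-1}<j<a_h$ satisfies $v_j<v_{a_{h-1}}$; and since $v_{b_2}=\max(v_{a_h+1},\ldots,v_n)$, every $v_j$ with $a_h<j<b_2$ satisfies $v_j<v_{b_2}$. These yield the upper bounds asserted in B-2 and B-3 for free. The classification into three types is then purely a case split: B-1 is $x=0$; otherwise $x\neq0$, and one further distinguishes $a_h=a_{h-1}+1$ (type B-2) from $a_h>a_{h-1}+1$ (type B-3). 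Completeness and mutual exclusivity are immediate, so all the content lies in the two lower-bound assertions.

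Next I would prove the lower bound $v_j>v_x$ for $a_{h-1}<j<a_h$ appearing in B-3. Here $x\neq0$, so $v_x>v_{x+1}$ with $x<a_{h-1}$. I first note $x+1<a_{h-1}$: if $x+1=a_{h-1}$ then $v_{x+1}=v_{a_{h-1}}>v_x$ (as $v_{a_{h-1}}$ is a left-to-right maximum and $x<a_{h-1}$), contradicting $v_x>v_{x+1}$. Thus $x,x+1,a_{h-1},j,a_h$ are five strictly increasing positions. Assume for contradiction that $v_j<v_x$. Using $v_{a_h}=\max(v)$, $v_{a_{h-1}}>v_x>v_{x+1}$ and $v_{a_{h-1}}>v_j$, the entries $v_x v_{x+1} v_{a_{h-1}} v_j v_{a_h}$ are order-isomorphic to $31425$ if $v_{x+1}<v_j$ and to $32415$ if $v_{x+1}>v_j$, either of which is forbidden. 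Hence $v_j>v_x$, and together with $v_j<v_{a_{h-1}}$ this is exactly the B-3 interval condition.

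For the ``further'' clauses common to B-2 and B-3, suppose $v_{b_2}>v_x$ and, for contradiction, $v_j<v_x$ for some $a_h<j<b_2$. Since $x+1\le a_{h-1}<a_h$, the positions $x,x+1,a_h,j,b_2$ are strictly increasing. Now $v_{a_h}=\max(v)$ occupies the middle slot, $v_{a_h}>v_{b_2}>v_x$, and $v_x>v_{x+1}$, $v_x>v_j$, so $v_x v_{x+1} v_{a_h} v_j v_{b_2}$ is order-isomorphic to $31524$ when $v_{x+1}<v_j$ and to $32514$ when $v_{x+1}>v_j$, both forbidden. This forces $v_j>v_x$, which with $v_j<v_{b_2}$ gives the stated $v_{b_2}>v_j>v_x$; note this argument uses only $a_{h-1}<a_h$, so it applies verbatim to both B-2 and B-3.

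I do not expect a genuine obstacle here: the argument is mechanical once the right quintuples of positions are chosen, matching the authors' remark that the proof mirrors Lemma~\ref{prop:<}. The only points requiring care are the bookkeeping of which of the four patterns arises (governed solely by the sign of $v_{x+1}-v_j$) and the small observation that the last descent before $a_{h-1}$ satisfies $x+1<a_{h-1}$, which keeps all five chosen indices distinct. The companion Lemma~\ref{prop:<B} would be handled by the same template, extracting $31425$/$32415$ or $31524$/$32514$ patterns from the analogous quintuples, so I would present the two lemmas together.
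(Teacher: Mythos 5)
Your proof is correct: the case split on $x=0$ versus $x\neq 0$ and $a_h=a_{h-1}+1$ versus $a_h>a_{h-1}+1$ is exhaustive, the upper bounds come from the left-to-right/right-to-left maxima structure, and each lower bound is correctly forced by extracting a $31425/32415$ (resp.\ $31524/32514$) pattern from the quintuple $v_x v_{x+1} v_{a_{h-1}} v_j v_{a_h}$ (resp.\ $v_x v_{x+1} v_{a_h} v_j v_{b_2}$), with the observation $x+1<a_{h-1}$ keeping the positions distinct. This is exactly the direct pattern-extraction argument the paper intends when it omits the proof as ``similar to Lemma~\ref{prop:<}.''
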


Based on Lemma~\ref{prop:>B}, its corresponding graphical description is given in Fig.~\ref{fig:structure-v-B}, where the gray boxes represent
consecutive increasing sequences which might be empty.

\begin{figure}[!htbp]
\begin{center}
\begin{tikzpicture}[line width=0.5pt,scale=0.27]
\coordinate (O) at (0,0);
\coordinate (O1) at (18,0);
\coordinate (O2) at (37,0);
\coordinate (O3) at (41,0);
\draw[pattern=north west lines] (7.5,-7) rectangle (10.5,1.5);
\draw[pattern=north west lines] (5,-7) rectangle (7.5,5.5);
\draw [fill=gray!30,ultra thick,gray!30] (1,-7) rectangle (4,5.5);

\fill[black!100] (O)++(4,5.5) circle(1ex)++(1,-5) circle(1ex)
++(2.5,6) circle(1ex)++(3,-5) circle(1ex);

\path (2.7,6.2)  node {\tiny{$v_{a_{h-1}}$}}
++(0.2,-5.5)node {\tiny{$v_{a_{h-1}+1}$}}
++(5.3,6.5) node {\tiny{$v_{a_h}$}}
++(3,-5) node {\tiny{$v_{b_2}$}}
++(-5.5,-10.5) node {\small{Type B-1}};

\draw[pattern=north west lines] (26,0) rectangle (28,1.5);
\draw[pattern=north west lines] (19,-7) rectangle (21,0);
\draw [fill=gray!30,ultra thick,gray!30] (21,0) rectangle (24,5.5);

\fill[black!100] (O1)circle(1ex)++(1,-3) circle(1ex)++(5,8.5) circle(1ex)++(1,1) circle(1ex)++(3,-5) circle(1ex);

\path (22.7,6.2)  node {\tiny{$v_{a_{h-1}}$}}
++(4,1) node {\tiny{$v_{a_h}=v_{a_{h-1}+1}$}}
++(1.6,-5) node {\tiny{$v_{b_2}$}}
++(-4,-10.5) node {\small{Type B-2(1)}};

\path (17.8,0.6)  node {\tiny{$v_{x}$}}
++(-0.4,-2.8) node {\tiny{$v_{x+1}$}};

\draw[pattern=north west lines] (45,-7) rectangle (47,-1.5);
\draw[pattern=north west lines] (38,-7) rectangle (40,0);
\draw [fill=gray!30,ultra thick,gray!30] (40,0) rectangle (43,5.5);

\fill[black!100] (O2)circle(1ex)++(1,-3) circle(1ex)++(5,8.5) circle(1ex)++(1,1) circle(1ex)++(3,-8) circle(1ex);

\path (41.7,6.2)  node {\tiny{$v_{a_{h-1}}$}}
++(4,1) node {\tiny{$v_{a_h}=v_{a_{h-1}+1}$}}
++(1.6,-8) node {\tiny{$v_{b_2}$}}
++(-4,-7.5) node {\small{Type B-2(2)}};

\path (36.8,0.6)  node {\tiny{$v_{x}$}}
++(-0.4,-2.8) node {\tiny{$v_{x+1}$}};

\draw[pattern=north west lines] (18,-20) rectangle (20,-18.5);
\draw[pattern=north west lines] (9,-27) rectangle (11,-20);
\draw[pattern=north west lines] (15,-15.5) rectangle (17,-20);
\draw [fill=gray!30,ultra thick,gray!30] (11,-20) rectangle (14,-15.5);

\fill[black!100] (O1)++(-10,-20)circle(1ex)++(1,-3) circle(1ex)++(5,7.5) circle(1ex)++(1,-2) circle(1ex)++(2,3) circle(1ex)++(3,-4) circle(1ex);

\path (12.4,-15)  node {\tiny{$v_{a_{h-1}}$}}
++(4,1.2) node {\tiny{$v_{a_h}$}}
++(4.5,-4.5) node {\tiny{$v_{b_2}$}}
++(-6.5,-11) node {\small{Type B-3(1)}}
++(-1.4,11.8) node {\tiny{$v_{a_{h-1}+1}$}};

\path (7.8,-20.6)  node {\tiny{$v_{x}$}}
++(-0.4,-2.8) node {\tiny{$v_{x+1}$}};

\draw[pattern=north west lines] (38,-27) rectangle (40,-21.5);
\draw[pattern=north west lines] (29,-27) rectangle (31,-20);
\draw[pattern=north west lines] (35,-15.5) rectangle (37,-20);
\draw [fill=gray!30,ultra thick,gray!30] (31,-20) rectangle (34,-15.5);

\fill[black!100] (O2)++(-9,-20)circle(1ex)++(1,-3) circle(1ex)++(5,7.5) circle(1ex)++(1,-2) circle(1ex)++(2,3) circle(1ex)++(3,-7) circle(1ex);

\path (32.4,-15)  node {\tiny{$v_{a_{h-1}}$}}
++(4,1.2) node {\tiny{$v_{a_h}$}}
++(4.5,-7.5) node {\tiny{$v_{b_2}$}}
++(-6.5,-8) node {\small{Type B-3(2)}}
++(-1.4,11.8) node {\tiny{$v_{a_{h-1}+1}$}};

\path (27.8,-20.6)  node {\tiny{$v_{x}$}}
++(-0.4,-2.8) node {\tiny{$v_{x+1}$}};
\end{tikzpicture}
\caption{The structure of a non-trivial $(31425,32415,31524,32514)$-avoiding word with $v_{a_{h-1}}>v_{b_2}$ and $h>2$.}
\label{fig:structure-v-B}
\end{center}
\end{figure}
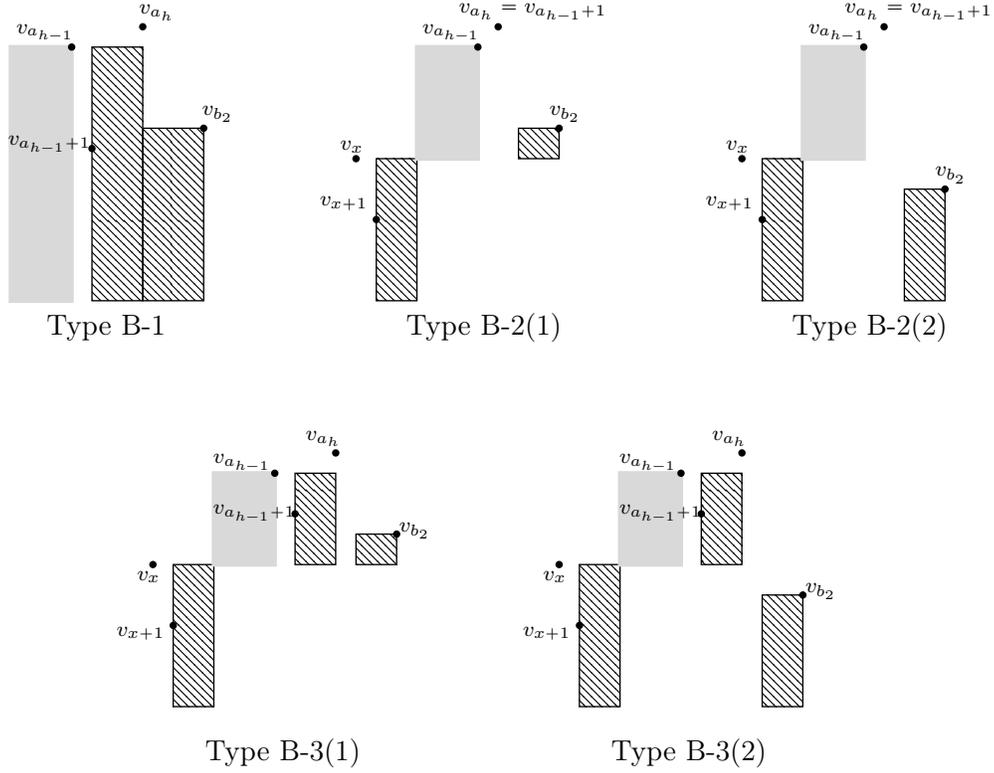

Now, we are ready to show that $\alpha$ and $\beta$ are inverses of each other.
\begin{proposition}\label{prop:bij}
We have $\beta \circ \alpha=I$ and $\alpha \circ \beta=I$.
\end{proposition}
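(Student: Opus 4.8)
The plan is to prove the two identities by induction on the word length $n$, establishing $\beta\circ\alpha=I$ on $\W_n(31245,32145,31254,32154)$ first and then obtaining $\alpha\circ\beta=I$ by the mirror argument; together these show that $\alpha$ and $\beta$ are mutually inverse. The base case $n=0$ is immediate, so I fix $w\in\W_n(31245,32145,31254,32154)$, write $v=\alpha(w)$, and assume the statement for all shorter words. The heart of the induction is to verify that the defining case of $\beta$ triggered by $v$ is precisely the one mirroring the case of $\alpha$ that produced $v$, and that the recursive step of $\beta$ undoes the recursive step of $\alpha$.

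The case-matching is governed by the statistic-preservation results. By items~1 and~2 of Proposition~\ref{prop:alpha} together with Lemma~\ref{vrmax:stat}, the map $\alpha$ preserves $\Vlrmax$, the initial ascending run, and $\Vrlmax$. Since left-to-right maxima occur in increasing order of value and right-to-left maxima in decreasing order of value, this forces $s=h$, $w_{l_i}=v_{a_i}$ for every $i$, and $w_{r_j}=v_{b_j}$ for every $j$. In particular $\lrmax(v)=\lrmax(w)$, $\rlmax(v)=\rlmax(w)$, $w_{l_{s-1}}=v_{a_{h-1}}$, and $w_{r_2}=v_{b_2}$, so the comparison between $w_{l_{s-1}}$ and $w_{r_2}$ agrees with that between $v_{a_{h-1}}$ and $v_{b_2}$; and since $\alpha$ fixes the initial ascending run and the value $\max(w)=\max(v)$, the condition that the maximum terminates the initial ascending run, equivalently $l_s=s$, transfers to $v$ as $a_h=h$. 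Hence $\beta$ selects the mirror case. The trivial branches then close at once: if $\lrmax(w)=1$ then $v=\max(w)\,\alpha(w_2\cdots w_n)$ and $\beta(v)=\max(w)\,\beta(\alpha(w_2\cdots w_n))=w$ by induction; if $\lrmax(w)>1=\rlmax(w)$ then $v=\varphi(w_1\cdots w_{n-1})\max(w)$ and $\beta(v)=\psi(\varphi(w_1\cdots w_{n-1}))\max(w)=w$ since $\psi=\varphi^{-1}$ by Theorem~\ref{thm:fourplus}; the branch $l_s=s$ (resp.\ branch (b3), where $s=2$ and $w_{l_{s-1}}>w_{r_2}$) is settled by deleting and reinserting $\max(w)$ at position $l_s$ (resp.\ by prepending $w_1$) and applying the induction hypothesis.

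The substance lies in the non-trivial branches (b1), where $w_{l_{s-1}}<w_{r_2}$, and (b2), where $w_{l_{s-1}}>w_{r_2}$ and $s>2$. In each, $\alpha$ deletes one distinguished left-to-right maximum $e_0$ (namely $w_{l_s}=\max(w)$ in (b1), and the second largest left-to-right maximum $w_{l_{s-1}}$ in (b2)), forms $u=\alpha(w\setminus e_0)$, and reinserts $e_0$ at a location dictated by the structure of $w$. I must check two things: that $\beta$ deletes the same element from $v$, and that it reinserts it at the identical position. The first is clean, because $v_{a_h}=\max(v)=\max(w)=e_0$ in (b1) and $v_{a_{h-1}}$ is the second largest left-to-right maximum of $v$, equal to $w_{l_{s-1}}=e_0$, in (b2); moreover deleting $e_0$ from $v$ returns exactly $u$, so $\beta(v\setminus e_0)=\beta(u)=w\setminus e_0$ by induction. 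The second point is the main obstacle, because $\alpha$ and $\beta$ phrase the insertion location differently (``before the $s$-th left-to-right maximum'' versus ``after the element closely following the $(h-1)$-th left-to-right maximum'', and so on). Here I would run through the structural types classified in Lemma~\ref{prop:<} and Lemma~\ref{prop:>}, which pin down whether $l_{s-1}+1=l_s$ or $l_{s-1}+2=l_s$ and the relative sizes of the intervening entries, hence the exact landing spot of $e_0$ inside $v$. Observation~\ref{pro:consecutiveA} translates the condition ``$l_{s-1}+1=l_s$ in $w$'' into ``$v_{a_{h-1}}$ and $v_{a_h}$ adjacent in $v$'', which is exactly the dichotomy $\beta$ tests; Observation~\ref{prop:relativeorderA} guarantees that the relative order of $w_{l_{s-1}},w_{l_s},w_{r_2}$ is unchanged, so $\beta$ reads off the same comparison; and Proposition~\ref{pro:desascA}, equivalently item~3 of Proposition~\ref{prop:beta} applied to $\beta(u)$, certifies that the element ``closely following'' the relevant left-to-right maximum in $\beta(u)$ is exactly $w_{l_{s-1}+1}$. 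Reconciling these descriptions shows that $\beta$ reinserts $e_0$ at position $l_s$ (resp.\ $l_{s-1}$) of $w$, which completes the inductive step.

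Finally, the identity $\alpha\circ\beta=I$ on $\W_n(31425,32415,31524,32514)$ is proved by the same induction with the two classes interchanged, now invoking Proposition~\ref{prop:beta}, Observation~\ref{prop:relativeorderB}, and the structural classifications of Lemma~\ref{prop:<B} and Lemma~\ref{prop:>B} in place of their counterparts. Since $\beta(\alpha(w))=w$ and $\alpha(\beta(v))=v$ for all admissible $w$ and $v$, the maps $\alpha$ and $\beta$ are mutually inverse, which is the assertion of the proposition. The one genuinely delicate point throughout is the position-matching in the branches where the two distinguished left-to-right maxima are not adjacent, where the rules used by $\alpha$ and by $\beta$ look different and must be identified via the structure lemmas and the consecutiveness results.
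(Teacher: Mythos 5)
Your proposal is correct and follows essentially the same route as the paper: induction on length, matching $\beta$'s case selection to $\alpha$'s via the preserved statistics $(\Vlrmax,\Vrlmax,\Iar)$, and reconciling the two insertion rules through Observations~\ref{prop:relativeorderA} and~\ref{pro:consecutiveA} together with the structural type lemmas (the paper executes the type-by-type position check I-$i\leftrightarrow$A-$i$, II-$i\leftrightarrow$B-$j$ that you outline). The only difference is one of completeness, not of method: you sketch the landing-spot verification in the non-adjacent branches where the paper writes it out.
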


\begin{proof}
To prove that $\beta \circ \alpha=I$, it suffices to show that
\begin{equation}\label{eq:alphabeta}
\beta (\alpha(w))=w
\end{equation}
with $w$ avoiding $\{31245, 32145,31254,32154\}$.
For the cases when $\lrmax(w)=1$, $\rlmax(w)=1$ or $l_s=s$, they can be readily  checked. We now focus on the non-trivial cases
when $\lrmax(w)>1$, $\rlmax(w)>1$ and $l_s>s$.
Assume that (\ref{eq:alphabeta}) holds for all words avoiding $\{31245,$ $ 32145,31254,32154\}$ with length less than $n$.
We wish to show that it is also valid for those of length $n$.

If $w_{l_{s-1}}< w_{r_2}$, let $\tilde w=w_1 \cdots w_{l_{s}-1} w_{l_{s}+1}\cdots w_n$. We consider the following three cases.
\begin{itemize}
  \item When $w$ is  of type I-1, namely $l_{s-1}+1=l_s$, $\alpha(w)$ is obtained by inserting
      $w_{l_s}$ just after $w_{l_{s-1}}$ in $\alpha(\tilde w)$.  It follows from
      item 1 in Proposition~\ref{prop:alpha} and Observation~\ref{prop:relativeorderA} that $\alpha(w)$ is
      a word of type A-1 in $\W_n(31425,32415,31524,32514)$. Consequently, inserting $w_{l_s}$ after $w_{l_{s-1}}$ in $\beta(\alpha(\tilde w))$, we obtain $\beta(\alpha(w))$.
      As $\beta(\alpha(\tilde w))$ $=\tilde w$ by induction, we see that $\beta(\alpha(w))=w$ in this case.
  \item  When $w$ is of type I-2 (resp.~I-3),
      $\alpha(w)$ is obtained by inserting
      $w_{l_s}$ just before $w_{r_{2}}$(resp.~$w_{l_s+1}$) in $\alpha(\tilde w)$. Based on Observation~\ref{pro:consecutiveA}, $w_{l_{s-1}}$ and $w_{l_s}$ are not
      adjacent in $\alpha(w)$, while they remain the $(s-1)$-th
      and the $s$-th left to right maxima of $\alpha(w)$, respectively. In view of Observation~\ref{prop:relativeorderA},
        $\alpha(w)$ is
      a word of type A-2 (resp.~A-3) in $\W_n(31425,32415,31524,32514)$. Consequently, inserting $w_{l_s}$ after the element closely following $w_{l_{s-1}}$ in $\beta(\alpha(\tilde w))=\tilde w$, we obtain $\beta(\alpha(w))$ which is equal to $w$.
\end{itemize}

If $w_{l_{s-1}}> w_{r_2}$ and $s>2$, let $\hat w=w_1 \cdots w_{l_{s-1}-1} w_{l_{s-1}+1}\cdots w_n$. We consider the following four cases.
\begin{itemize}
  \item When $w$ is of type II-1, namely $l_{s-2}+1=l_{s-1}$, $\alpha(w)$ is obtained by inserting
      $w_{l_{s-1}}$ just after $w_{l_{s-2}}$ in $\alpha(\hat w)$.  In view of
      item 1 in Proposition~\ref{prop:alpha} and Observation~\ref{prop:relativeorderA}, it follows that $\alpha(w)$ is a word of type B-1 in $\W_n(31425,32415,31524,32514)$. Consequently, inserting $w_{l_{s-1}}$ after $w_{l_{s-2}}$ in $\beta(\alpha(\hat w))$, we obtain $\beta(\alpha(w))$.
      As $\beta(\alpha(\hat w))=\hat w$ by induction, we see that $\beta(\alpha(w))=w$ in this case.
  \item  If $w$ is of type II-2 (resp.~II-3, II-4) and $x \neq l_{s-2}$,
      $\alpha(w)$ is obtained by inserting
      $w_{l_{s-1}}$ just after $w_{l_{s-2}}$
      in $\alpha(\hat w)$. Similarly as that of type II-1, we may prove that $\beta(\alpha(w))=w$.    If $w$ is of type II-2 (resp.~II-3, II-4) and $x = l_{s-2}$, then $\alpha(w)$ can be obtained by inserting $w_{l_{s-1}}$ just before
      $w_{l_s}$ (resp.~$w_{l_{s-1}+1}$, $w_{l_s}$). By item 1 in Proposition~\ref{prop:alpha} and Observation~\ref{prop:relativeorderA}, $\alpha(w)$ is of type B-2 (resp.~B-3, B-2) in
      $\W_n(31425,32415,31524,32514)$.
      By Observation~\ref{pro:consecutiveA}, $w_{l_{s-2}}$ and $w_{l_{s-1}}$ are not adjacency in $\alpha(w)$.
       Thus, $\beta(\alpha(w))$
      is obtained by inserting $w_{l_{s-1}}$ just after the element closely following $w_{l_{s-2}}$ in $\beta(\alpha(\hat w))$. By the induction hypothesis,
      $\beta(\alpha(\hat w))
     =\hat w$ and hence we have $\beta(\alpha(w))=w$ for this subcase.
\end{itemize}

The case when $w_{l_{s-1}}> w_{r_2}$ and $s=2$ can be easily verified and
we complete the proof of (\ref{eq:alphabeta}).

To prove that $\alpha \circ \beta=I$, it suffices to show that
\begin{equation}\label{eq:betaalpha}
\alpha (\beta(v))=v
\end{equation}
with $v$ avoiding $\{31425, 32415,31524,32514\}$.
The cases when $\lrmax(v)=1$, $\rlmax(v)=1$ or $a_h=h$ can be check easily. We now explore the non-trivial cases
when $\lrmax(v)>1$, $\rlmax(v)>1$ and $a_h>h$.
Again, we proceed by induction on $n$.

If $v_{a_{h-1}}< v_{b_2}$, let $\tilde v=v_1 \cdots v_{a_{h}-1} v_{a_{h}+1}\cdots v_n$. We consider the following three cases.
\begin{itemize}
  \item When $v$ is  of type A-1, namely $a_{h-1}+1=a_h$, $\beta(v)$ is obtained by inserting
      $v_{a_h}$ just after $v_{a_{h-1}}$ in $\beta(\tilde v)$.  It follows from item 1 in Proposition~\ref{prop:beta} and Observation~\ref{prop:relativeorderB} that $\beta(v)$ is
      a word of type I-1 in $\W_n(31245,32145,31254,32154)$. Consequently, inserting $v_{a_h}$ after $v_{a_{h-1}}$ in $\alpha(\beta(\tilde v))$, we obtain $\alpha(\beta(v))$.
      As $\alpha(\beta(\tilde v))=\tilde v$ by induction, we see that $\alpha(\beta(v))=v$ in this case.
  \item  When $v$ is of type A-2 (resp.~A-3),
      $\beta(v)$ is obtained by inserting
      $v_{a_h}$ just after the element closely following
      $v_{a_{h-1}}$ in $\beta(\tilde v)$.
       Based on item 1 in Proposition~\ref{prop:beta}, $v_{a_{h-1}}$ and $v_{a_{h}}$ remain to be  the $(h-1)$-th
      and the $h$-th left to right maxima of $\beta(v)$, respectively. By Observation~\ref{prop:relativeorderB},
      we see that $v_{a_{h-1}}$, $v_{a_{h}}$ and $v_{b_{2}}$
      keep the same relative order in $\beta(v)$.
      Thus, $\beta(v)$ is
      a word of type I-2 (resp.~I-3) in $\W_n(31245,32145,31254,32154)$ in view of item~3 in Proposition~\ref{prop:beta}. Consequently, inserting $v_{a_h}$ before $v_{b_2}$ (resp.~$v_{a_h+1}$)
       in $\alpha(\beta(\tilde v))=\tilde v$, we obtain $\alpha(\beta(v))$ which is equal to $v$.
\end{itemize}

If $v_{a_{h-1}}> v_{b_2}$ and $h>2$, let $\hat v=v_1 \cdots v_{a_{h-1}-1} v_{a_{h-1}+1}\cdots v_n$. We consider the following three cases.
\begin{itemize}
  \item When $v$ is  of type B-1, then $\beta(v)$ is obtained by inserting
      $v_{a_{h-1}}$ just after $v_{a_{h-2}}$ in $\beta(\hat v)$.  It follows that $\beta(v)$ is
      a word of type II-1 in $\W_n(31245,32145,31254,32154)$ through
      item 1 in Proposition~\ref{prop:beta} and Observation~\ref{prop:relativeorderB}. Consequently, inserting $v_{a_{h-1}}$ after $v_{a_{h-2}}$ in $\alpha(\beta(\hat v))$, we obtain $\alpha(\beta(v))$.
      As $\alpha(\beta(\hat v))=\hat v$ by induction, we see that $\alpha(\beta(v))=v$ in this case.
  \item  If $w$ is of type B-2 (resp.~B-3) and $x \neq a_{h-2}$,
      $\beta(v)$ is obtained by inserting
      $v_{a_{h-1}}$ just after $v_{a_{h-2}}$
      in $\beta(\hat v)$. Similarly as that of type B-1, we may prove that $\alpha(\beta(v))=v$.    If $v$ is of type B-2 (resp.~B-3) and $x = a_{h-2}$, then $\beta(v)$ can be obtained by inserting $v_{a_{h-1}}$ just after the element closely following  $v_{a_{h-2}}$ in $\beta(\hat v)$. By items~1 and~3 in Proposition~\ref{prop:beta}, we see that $\beta(v)$
      is a word of type II-2 or II-4 (resp.~II-3) in $\W_n(31245,32145,31254,32154)$. Further, $\alpha(\beta(v))$ is obtained by inserting $v_{a_{h-1}}$
      just before $v_{a_h}$ (resp.~$v_{a_{h-1}+1}$) in $\alpha(\beta(\hat v))$. Since
      $\alpha(\beta(\hat v))=\hat v$ by the induction hypothesis, we have $\alpha(\beta(v))=v$.
\end{itemize}

The verification of~\eqref{eq:betaalpha} when $v_{a_{h-1}}> v_{b_2}$ and $h=2$ is plain, which
completes the proof of this proposition.
\end{proof}

Combining Proposition~\ref{prop:alpha}, Lemma~\ref{vrmax:stat} and Proposition~\ref{prop:bij}, we finish  the proof of Theorem~\ref{thm:fiveplus}.

In the following, we give an example of the maps $\alpha$ and $\beta$. During the insertion procedure, we need only care about the inserted element, the ``landmark'' element (i.e., the element before/after which we insert) and their relative order in positions.
\begin{example}\label{ex:five}
Assume that $\pi \in \SS_{23}(31245,32145,31254,32154)$ and
\[\pi=23 \, 1 \, 3\, 10\, 18\, 2\, 22\, 21\, 19\, 16\, 14\, 20\, 15\, 11\, 17\, 12\, 9\, 6\, 13\, 7\, 8\, 4\, 5.\]
Then $l_1=1$ with $s=1$ and
$(r_1,r_2,r_3,r_4,r_5,r_6,r_7,r_8)=(1,7,8,12,15,19,21,23)$ with $t=8$.

By the construction of $\alpha$,   we  deduce that
$\alpha(\pi)=23\, \alpha(\pi_2 \cdots \pi_{23})$.
 Based on $\alpha(1\,3\,2\,4\,5)=\varphi(1\,3\,2\,4)\,5=1\,3\,2\,4\,5$,  $\alpha(\pi_2 \cdots \pi_{23})$ can be obtained by
induction with the length of the word decreasing one by one  as follows.
We write, as an example, ``$22$ before $21$'' instead of ``$22$ is inserted before $21$''
for convenience.
\begin{align*}
  &\text{$22$  before $21$} \rightarrow \text{$21$  before $19$} \rightarrow \text{$19$ before $20$} \rightarrow \text{$18$ after  $10$}\rightarrow \text{$20$ before $17$}\rightarrow \text{$16$ before $14$}\\[2pt]
 & \rightarrow\text{$15$  after $14$} \rightarrow \text{$14$  before $11$} \rightarrow \text{$17$ after $11$} \rightarrow \text{$12$ after  $11$}\rightarrow \text{$11$ before $13$}\rightarrow \text{$10$ after $3$}\rightarrow\\[2pt]
 & \text{$9$  before $6$} \rightarrow \text{$13$  before $7$} \rightarrow \text{$7$ after $6$} \rightarrow \text{$6$ before  $8$}\rightarrow \text{$8$ before $4$}.
  \end{align*}
  By inserting elements in $1\,3\,2\,4\,5$ in reverse order as given above, we obtain that
  \[\alpha(\pi)=23\, 1 \, 3 \, 10\, 18\,2\,9\,6\, 16\,14\, 15\, 11\, 22\, 21\,
  19\, 20\, 17\, 12\, 13\,7\,8\, 4\, 5.\]

On the other hand, assume that $p \in \SS_n(31425,32415,31524,32514)$ and
  \[p =23\, 1 \, 3 \, 10\, 18\,2\,9\,6\, 16\,14\, 15\, 11\, 22\, 21\,
  19\, 20\, 17\, 12\, 13\,7\,8\, 4\, 5.\]
Based on the construction of $\beta$, $\beta(p)=23 \, \beta(p_2 \cdots p_{23})$. Similarly, $\beta(\pi_2 \cdots \pi_{23})$ can be obtained via inserting elements  in  $\beta(1\,3\,2\,4\,5)=\psi(1\,3\,2\,4)5=1\,3\,2\,4\,5$ in the reverse order given as follows.
\begin{align*}
  &\text{$22$  after $2$} \rightarrow \text{$21$  after $2$} \rightarrow \text{$19$ after $2$} \rightarrow \text{$18$ after  $10$}\rightarrow \text{$20$ after $14$}\rightarrow \text{$16$ after $2$}\rightarrow\\[2pt]
 & \text{$15$  after $14$} \rightarrow \text{$14$  after $2$} \rightarrow \text{$17$ after $11$} \rightarrow \text{$12$ after  $11$}\rightarrow \text{$11$ after $2$}\rightarrow \text{$10$ after $3$}\rightarrow\\[2pt]
 & \text{$9$  after $2$} \rightarrow \text{$13$  after $6$} \rightarrow \text{$7$ after $6$} \rightarrow \text{$6$ after  $2$}\rightarrow \text{$8$ after $2$}.
  \end{align*}
  It follows that  $\beta(p)=23 \, 1 \, 3\, 10\, 18\, 2\, 22\, 21\, 19\, 16\, 14\, 20\, 15\, 11\, 17\, 12\, 9\, 6\, 13\, 7\, 8\, 4\, 5.$
\end{example}

\section{Revisiting $(201,210)$-avoiding inversion sequences} 
\label{sec:4}

The {\em inversion sequences} of length $n$,
$$
\I_n:=\{(e_1,e_2,\ldots,e_n)\in\N^n: 0\leq e_i<i\},
$$
serve as various kinds of codings for $\SS_n$. By a coding of $\SS_n$, we mean a bijection from $\SS_n$ to $\I_n$.
For example, the famous {\em Lehmer code}   $\Theta: \SS_n\rightarrow\I_n$ is defined  as
$$
\Theta(\pi)=(e_1,e_2,\ldots,e_n),\quad\text{where $e_i:=|\{j: j<i\text{ and }\pi_j>\pi_i\}|$}
$$
for each $\pi\in\SS_n$.
The interplay between inversion sequences and permutations possess a number of unexpected  applications in studying patterns and statistics~\cite{AE,kl,ms,CL,CN,HZ,sv}.

The study of enumerations and bijections for inversion sequences avoiding multiple patterns of length 3 was initiated by Martinez and Savage~\cite{ms} and continued  by many other researchers (see~\cite{bbgr,Lin,CL} and the references therein). In this section, we revisit $(201,210)$-avoiding inversion sequences and show how they can help to prove a refinement of Gao--Kitaev's conjecture and compute the generating function for the sequence A212198  that counts the three classes of pattern avoiding permutations in concern.

\subsection{On Martinez--Savage's coding $\phi$ and a refinement of Gao--Kitaev's conjecture}
\label{sec:phi}
The permutation code $\phi:\SS_n\rightarrow\I_n$ introduced by Martinez and Savage~\cite{ms} will be used to obtain  a refinement of Gao--Kitaev's conjecture. For $\pi\in\SS_n$, the inversion sequence $\phi(\pi)=(e_1,e_2,\ldots,e_n)\in\I_n$ is defined by first setting $e_n=\pi_n-1$ and then for $i$ from $n-1$ to $1$ do
\begin{itemize}
\item if $\pi_i\leq i$, then set $e_i=\pi_i-1$;
\item otherwise, $\pi_i>i$ and if $\pi_i$ is the $k$-th largest element  in $\{\pi_1,\pi_2,\ldots,\pi_i\}$, then set $e_i$ to be the $k$-th smallest in  $\{e_j: i<j\leq n\}$.
\end{itemize}
For example, if $\pi=582937416\in\SS_9$, then $\phi(\pi)=(0,0,1,0,2,5,3,0,5)\in\I_n$. Note that whenever $\pi_i>i$ and $\pi_i$ is the $k$-th largest  in $\{\pi_1,\pi_2,\ldots,\pi_i\}$, there are at least $k$ letters not greater than $i$ occurring after $\pi_i$ in $\pi$, which forces $e_i<i$ and so $\phi(\pi)$ is really  an inversion sequence.

Let us introduce some statistics on permutations and inversion sequences.
For each permutation $\pi\in\SS_n$,
\begin{itemize}
\item  $\lmaxz(\pi)$ is one plus the number of left-to-right maxima of $\pi$ appear before the letter $1$ in $\pi$;
\item   $\exc(\pi):=|\{i\in[n-1]:\pi_i>i\}$|, the number of {\em excedances} of $\pi$.
\end{itemize}
For each inversion sequence $e\in\I_n$,
\begin{itemize}
\item $\dist(e):=|\{e_1,e_2,\ldots,e_n\}\setminus\{0\}|$, the number of {\em distinct positive entries} of $e$;
\item  $\rep(e):=n-1-\dist(e)$, the number of times that entries of $e$ are repeated;
\item $\rlmin(e):=|\{i\in[n]: e_i<e_j \text{ for all $j>i$\,}\}|$, the number of {\em right-to-left minima} of $e$;
\item $\zero(e):=|\{i\in[n]: e_i=0\}|$, the number of {\em zero entries} in $e$.
\end{itemize}
We observe the following property of $\phi$.
\begin{lemma}\label{lem:rlmin}
The coding $\phi:\SS_n\rightarrow\I_n$ transforms the triple of statistics $(\exc,\rlmin,\lmaxz)$ to $(\rep,\rlmin,\zero)$.
\end{lemma}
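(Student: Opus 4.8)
The plan is to prove the three coordinate equalities $\exc(\pi)=\rep(\phi(\pi))$, $\rlmin(\pi)=\rlmin(\phi(\pi))$ and $\lmaxz(\pi)=\zero(\phi(\pi))$ one at a time, all resting on two structural features of the right-to-left definition of $\phi$. Write $e=\phi(\pi)$. First, a \emph{non-excedance} position ($\pi_i\le i$) receives $e_i=\pi_i-1$, and since $\pi$ is a permutation these values are pairwise distinct. Second, an \emph{excedance} position ($\pi_i>i$) receives the $k$-th smallest element of the set $\{e_j:i<j\le n\}$, that is, the $k$-th smallest distinct value occurring to its right; in particular it copies a value that already appears to the right and introduces \emph{no new value}. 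From these two facts I would first prove, by downward induction on $i$, that the set of values occurring in $e_ie_{i+1}\cdots e_n$ is exactly $\{\pi_j-1:j\ge i\text{ and }\pi_j\le j\}$.

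The identity $\exc(\pi)=\rep(\phi(\pi))$ is then immediate: the distinct entries of $e$ are in bijection with the non-excedances via $j\mapsto\pi_j-1$, so $e$ has $n-\exc(\pi)$ distinct values. Discarding the single zero, which arises from the position of the letter $1$ (a non-excedance), gives $\dist(e)=n-1-\exc(\pi)$, whence $\rep(e)=n-1-\dist(e)=\exc(\pi)$.

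For $\rlmin(\pi)=\rlmin(\phi(\pi))$ I would show that a position is a right-to-left minimum of $\pi$ if and only if it is one of $e$. If $i$ is a right-to-left minimum of $\pi$, then $\pi_i$ is the least entry of the suffix $\pi_i\cdots\pi_n$, which forces $\pi_i\le i$, so $i$ is a non-excedance and $e_i=\pi_i-1$; tracing any later entry $e_j$ back through the copying to the non-excedance that produced it shows $e_j>\pi_i-1=e_i$, so $i$ is a right-to-left minimum of $e$. Conversely, an excedance is never a right-to-left minimum of $e$, because its value equals (if $k=1$) or exceeds (if $k\ge2$) the smallest entry lying to its right; and a non-excedance that is a right-to-left minimum of $e$ must be one of $\pi$, as otherwise the suffix-minimum of $\pi$ would sit to its right at a non-excedance carrying a strictly smaller $e$-value, a contradiction.

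The delicate equality is $\lmaxz(\pi)=\zero(\phi(\pi))$, which I expect to be the main obstacle. The key simplification is that $0$ is the least possible entry, so at an excedance $i$ one has $e_i=0$ exactly when its rank is $k=1$ — equivalently, when $\pi_i$ is a left-to-right maximum — and some entry equal to $0$ already lies to the right of $i$. Using this I would prove by downward induction that the zero positions of $e$ are precisely the position of the letter $1$ together with the left-to-right maxima of $\pi$ that precede it. The one auxiliary fact required is that every left-to-right maximum occurring before the letter $1$ is automatically an excedance: the entries before it are that many distinct values all exceeding $1$, so the running maximum exceeds its own index. Granting this, counting the zero positions yields $\zero(e)=1+\#\{\text{left-to-right maxima before the letter }1\}=\lmaxz(\pi)$, which completes the proof.
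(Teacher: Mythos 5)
Your proof is correct and follows essentially the same route as the paper: a direct positional analysis of $\phi$ showing that non-excedances contribute the distinct values $\pi_j-1$ while excedances only copy values from their right, that right-to-left minima of $\pi$ are non-excedances whose value stays minimal in $e$, and that the zeros of $e$ are the position of the letter $1$ together with the left-to-right maxima preceding it. Your version is in fact more complete than the paper's sketch, since you make the suffix-value-set claim explicit and supply the converse directions (excedances are never right-to-left minima of $e$; non-left-to-right-maximal excedances are never zero) that the paper leaves implicit.
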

\begin{proof}
Let $\pi\in\SS_n$ and let $e=\phi(\pi)$. The fact that $\exc(\pi)=\rep(e)$ is obvious from the construction of $\phi$, which was known in~\cite{ms}. If $\pi_i$ is a right-to-left minimum of $\pi$, i.e., $\pi_i<\pi_j$ for all $j\geq i$, then $\pi_i\geq i$. Thus, $e_i=i-1$ is a right-to-left minimum of $e$. This proves $\rlmin(\pi)=\rlmin(e)$.

It remains to show that $\lmaxz(\pi)=\zero(e)$. Suppose that $\pi_k=1$ for some $k$. Then $e_k=0$ is the rightmost zero entry of $e$.
Moreover, if $k>1$ and $\pi_i$ is a left-to-right maximum  of $\pi$ appears before the letter $1$, then $\pi_i>i$ and $\pi_i$ is largest in $\{\pi_1,\pi_2,\ldots,\pi_i\}$, which forces $e_i=0$ by the definition of $\phi$. This proves $\lmaxz(\pi)=\zero(e)$.
\end{proof}

  Martinez and Savage~\cite[Theorem~56]{ms} showed that the coding $\phi$ restricts to a bijection between $\SS_n(45312,45321,54312,54321)$ and $\I_n(201,210)$. In view of Lemma~\ref{lem:rlmin}, we have

\begin{proposition}
\label{prop:ChenL}
For $n\geq1$,
\begin{equation}\label{eq:phi}
\sum_{\pi\in\SS_n(45312,45321,54312,54321)}t^{\exc(\pi)}p^{\lrmin(\pi)}q^{\lmaxz(\pi)}=\sum_{e\in\I_n(201,210)}t^{\rep(e)}p^{\rlmin(e)}q^{\zero(e)}.
\end{equation}
\end{proposition}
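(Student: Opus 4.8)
The plan is to read the identity~\eqref{eq:phi} directly off the Martinez--Savage coding $\phi$, so that the proposition becomes a formal corollary of two facts already available: the bijectivity of $\phi$ on the relevant avoidance classes, and the statistic transport recorded in Lemma~\ref{lem:rlmin}. First I would recall from~\cite[Theorem~56]{ms} that $\phi$ restricts to a bijection from $\SS_n(45312,45321,54312,54321)$ onto $\I_n(201,210)$. Consequently the map $\pi\mapsto e:=\phi(\pi)$ puts the index set of the left-hand sum of~\eqref{eq:phi} in explicit one-to-one correspondence with the index set of the right-hand sum, which reduces the claimed polynomial identity to a term-by-term comparison of monomials.

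Next I would invoke Lemma~\ref{lem:rlmin}, which asserts that $\phi$ carries the statistic triple tracked on permutations to the statistic triple tracked on inversion sequences. Thus, for each $\pi$ in the class, the weight attached to $\pi$ on the left agrees exponent by exponent with the weight attached to its image $e=\phi(\pi)$ on the right: the $t$-exponent matches because $\exc(\pi)=\rep(e)$, the $p$-exponent matches because the relevant minima statistic is preserved, and the $q$-exponent matches because $\lmaxz(\pi)=\zero(e)$. Summing this per-element equality of monomials over the bijection $\phi$ produces~\eqref{eq:phi}.

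Because both ingredients are already in hand, there is essentially no obstacle remaining in the proposition itself; all the substantive content has been pushed into Lemma~\ref{lem:rlmin}, and in particular into the identity $\lmaxz(\pi)=\zero(e)$, whose verification required tracing the recursive construction of $\phi$ to see that every left-to-right maximum occurring before the letter $1$ is forced to receive a zero entry. Once that lemma and the Martinez--Savage restriction are granted, the proposition follows immediately by the monomial bookkeeping described above, and I would keep the written proof to a single sentence citing both results.
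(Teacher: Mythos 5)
Your proposal is correct and is essentially identical to the paper's own argument: the paper likewise derives Proposition~\ref{prop:ChenL} immediately by combining Martinez--Savage's result that $\phi$ restricts to a bijection between $\SS_n(45312,45321,54312,54321)$ and $\I_n(201,210)$ with the statistic transport in Lemma~\ref{lem:rlmin}, summing the matching monomials over the bijection. No further comment is needed.
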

On the other hand, our work in~\cite[Proposition~3.7]{CL} proves  that  Baril and Vajnovszki's $b$-code~\cite{BV} restricts to a bijection between $\SS_n(45312,45321,54312,54321)$ and $\I_n(201,210)$ and hence
\begin{equation}\label{eq:bcode}
\sum_{\pi\in\SS_n(24135,24153,42135,42153)}t^{\ides(\pi)}p^{\rlmax(\pi)}q^{\lrmax(\pi)}=\sum_{e\in\I_n(201,210)}t^{\dist(e)}p^{\rlmin(e)}q^{\zero(e)}.
\end{equation}
Combining~\eqref{eq:phi} and~\eqref{eq:bcode} gives

\begin{proposition}\label{prop:bphi}
For $n\geq1$,
\begin{equation*}
\sum_{\pi\in\SS_n(45312,45321,54312,54321)}t^{\exc(\pi)}p^{\lrmin(\pi)}q^{\lmaxz(\pi)}=\sum_{\pi\in\SS_n(24135,24153,42135,42153)}t^{\iasc(\pi)}p^{\rlmax(\pi)}q^{\lrmax(\pi)},
\end{equation*}
where $\iasc(\pi):=n-1-\ides(\pi)$ is the number of ascents of $\pi^{-1}$.
\end{proposition}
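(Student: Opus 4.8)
The plan is to derive Proposition~\ref{prop:bphi} purely algebraically from Proposition~\ref{prop:ChenL} (equation~\eqref{eq:phi}) and equation~\eqref{eq:bcode}, exploiting the complementation relations already present in the definitions of the statistics. The crucial observation is that both of these identities compute a generating function over $\I_n(201,210)$ with the \emph{same} $p$- and $q$-weights $p^{\rlmin(e)}q^{\zero(e)}$; the two right-hand sides differ only in the first variable, with \eqref{eq:phi} recording $t^{\rep(e)}$ and \eqref{eq:bcode} recording $t^{\dist(e)}$. Since $\rep(e)=n-1-\dist(e)$ by definition, these two sums are interchangeable via the substitution $t\mapsto 1/t$ together with a scaling by $t^{n-1}$.

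Concretely, I would first rewrite the left-hand side of the proposition using \eqref{eq:phi} as $\sum_{e\in\I_n(201,210)}t^{\rep(e)}p^{\rlmin(e)}q^{\zero(e)}$. Substituting $\rep(e)=n-1-\dist(e)$ and pulling out the factor $t^{n-1}$ recasts this as $t^{n-1}\sum_{e\in\I_n(201,210)}t^{-\dist(e)}p^{\rlmin(e)}q^{\zero(e)}$. Because \eqref{eq:bcode} is, for each fixed $n$, an identity of polynomials in $t,p,q$, it remains valid after the substitution $t\mapsto 1/t$; applying it to the inner sum replaces it by $\sum_{\pi}t^{-\ides(\pi)}p^{\rlmax(\pi)}q^{\lrmax(\pi)}$ taken over $\SS_n(24135,24153,42135,42153)$.

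The last step is to absorb the prefactor $t^{n-1}$ back inside the sum using the defining relation $\iasc(\pi)=n-1-\ides(\pi)$, which gives $t^{n-1}\cdot t^{-\ides(\pi)}=t^{\iasc(\pi)}$. This produces exactly $\sum_{\pi\in\SS_n(24135,24153,42135,42153)}t^{\iasc(\pi)}p^{\rlmax(\pi)}q^{\lrmax(\pi)}$, the right-hand side of the proposition, so the chain of equalities closes. I anticipate no genuine obstacle: the argument is bookkeeping with the two complementation identities $\rep=n-1-\dist$ and $\iasc=n-1-\ides$ and the involution $t\mapsto 1/t$. The only point meriting a word of justification is the legitimacy of that substitution, which is immediate since \eqref{eq:bcode} holds as a polynomial identity; alternatively, one may clear denominators by multiplying through by $t^{n-1}$ at the very start, thereby avoiding negative exponents altogether.
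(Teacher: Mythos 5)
Your argument is correct and is exactly what the paper does: the proof there is the single line ``Combining~\eqref{eq:phi} and~\eqref{eq:bcode} gives,'' and the combining step is precisely your bookkeeping with $\rep(e)=n-1-\dist(e)$ and $\iasc(\pi)=n-1-\ides(\pi)$ (equivalently, the substitution $t\mapsto 1/t$ followed by multiplication by $t^{n-1}$ in the polynomial identity~\eqref{eq:bcode}). You have simply made explicit the routine manipulation the authors left implicit.
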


Since the inverse $\pi\mapsto\pi^{-1}$ sets up a bijection between $\SS_n(24135,24153,42135,42153)$ and
$\SS_n(31425,32415,31524,32514)$ and transforms the triple of statistics $(\iasc,\rlmax,\lrmax)$ to $(\asc,\rlmax,\rlmin)$, we have
$$
\sum_{\pi\in\SS_n(24135,24153,42135,42153)}t^{\iasc(\pi)}p^{\rlmax(\pi)}q^{\lrmax(\pi)}=\sum_{\pi\in\SS_n(31425,32415,31524,32514)}t^{\asc(\pi)}p^{\rlmax(\pi)}q^{\rlmin(\pi)}.
$$
The following refinement of Gao--Kitaev's conjecture then follows from Proposition~\ref{prop:bphi} and  Theorem~\ref{thm:fiveplus}.
\begin{proposition}[A refinement of Gao--Kitaev's conjecture]
\label{refine:GK}
For $n\geq1$,
$$
\sum_{\pi\in\SS_n(45312,45321,54312,54321)}p^{\lrmin(\pi)}=\sum_{\pi \in \SS_n(31245,32145,31254,32154)} p^{\rlmax(\pi)}.
$$
\end{proposition}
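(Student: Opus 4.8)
The plan is to reduce the asserted identity to statistics already shown to be equidistributed and then to transfer across the bijection $\alpha$. First I would combine Proposition~\ref{prop:bphi} with the displayed consequence of the inverse map $\pi\mapsto\pi^{-1}$ recorded just above the statement to obtain the refined three-variable equality
\begin{equation*}
\sum_{\pi\in\SS_n(45312,45321,54312,54321)}t^{\exc(\pi)}p^{\lrmin(\pi)}q^{\lmaxz(\pi)}=\sum_{\pi\in\SS_n(31425,32415,31524,32514)}t^{\asc(\pi)}p^{\rlmax(\pi)}q^{\rlmin(\pi)}.
\end{equation*}
Specializing $t=q=1$ then collapses this to the single-variable identity
\begin{equation*}
\sum_{\pi\in\SS_n(45312,45321,54312,54321)}p^{\lrmin(\pi)}=\sum_{\pi\in\SS_n(31425,32415,31524,32514)}p^{\rlmax(\pi)}.
\end{equation*}

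Next I would invoke Theorem~\ref{thm:fiveplus}. The bijection $\alpha$ preserves the set-valued statistic $\Vrlmax$, hence in particular it preserves its cardinality $\rlmax$; therefore $\rlmax$ is equidistributed over the domain and the codomain of $\alpha$, which gives
\begin{equation*}
\sum_{\pi\in\SS_n(31425,32415,31524,32514)}p^{\rlmax(\pi)}=\sum_{\pi\in\SS_n(31245,32145,31254,32154)}p^{\rlmax(\pi)}.
\end{equation*}
Concatenating the two single-variable displays yields precisely the asserted equality between $\sum p^{\lrmin}$ over $\SS_n(45312,45321,54312,54321)$ and $\sum p^{\rlmax}$ over $\SS_n(31245,32145,31254,32154)$.

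The substance of the argument has already been borne by the earlier results: the codings of Martinez--Savage and Baril--Vajnovszki underlie Proposition~\ref{prop:bphi}, while the intricate recursive bijection $\alpha$ supplies the closing transfer. The only thing to watch is the statistic bookkeeping under the specialization $t=q=1$, namely checking that killing $\exc$ on the left (equivalently $\asc$ on the right) together with $\lmaxz$ on the left (equivalently $\rlmin$ on the right) leaves exactly the matched pair $\lrmin\leftrightarrow\rlmax$. Beyond this routine verification I do not anticipate any genuine obstacle, since the equidistribution of $\rlmax$ across the two length-$5$ avoidance classes follows immediately from $\alpha$ preserving the set $\Vrlmax$.
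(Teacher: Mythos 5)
Your proposal is correct and follows exactly the paper's own argument: combine Proposition~\ref{prop:bphi} with the displayed identity coming from $\pi\mapsto\pi^{-1}$, specialize $t=q=1$, and then transfer $\rlmax$ across the two length-$5$ classes via Theorem~\ref{thm:fiveplus}, since $\alpha$ preserves $\Vrlmax$ and hence its cardinality. No differences of substance.
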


\subsection{A succession rule for $(201,210)$-avoiding inversion sequences}
It has been a widely used method to prove that two pattern-avoiding classes have the same cardinality by showing they obey the same succession rule; see~\cite{bbgr,FL2021,kl,ms}. Although we can show that $(201,210)$-avoiding inversion sequences do obey a simple succession rule, we failed  to find any succession rule for $(31245,32145,31254,32154)$-avoiding permutations.

For each $e\in\I_n(201,210)$, define the {\em parameters} $(p,q)$ of $e$, where
$$
p=|\{k>e_n: (e_1,e_2,\ldots,e_n,k)\in\I_{n+1}(201,210)\}|
$$
and
$$
q=|\{k\leq e_n: (e_1,e_2,\ldots,e_n,k)\in\I_{n+1}(201,210)\}|.
$$
For example, if $e=(0,1,0,2,4,2,5)\in\I_7(201,210)$, then the parameters of $e$  is $(2,3)$.
We have the following succession rule for $(201,210)$-avoiding inversion sequences.

\begin{lemma}\label{lem:201}
Suppose that $e\in\I_n(201,210)$ has parameters $(p,q)$. Exactly $p+q$  inversion sequences in $\I_{n+1}(201,210)$ when removing their last entries will become $e$, and their  parameters are respectively:
\begin{align*}
&(p,q+1), (p-1,q+2), \ldots, (1,q+p), \\
&(p+1,q), \underbrace{(p+2,1),\ldots, (p+2,1)}_{q-1}.
\end{align*}
\end{lemma}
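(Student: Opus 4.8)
The plan is to reduce everything to a single local criterion for when a letter may be appended to a $(201,210)$-avoiding inversion sequence, and then to read off the children's parameters by tracking how appending changes the set of admissible continuations.

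First I would prove the \emph{appending criterion}: for $f=(f_1,\dots,f_N)\in\I_N(201,210)$ and $0\le k\le N$, the word $f\cdot k:=(f_1,\dots,f_N,k)$ lies in $\I_{N+1}(201,210)$ if and only if (A) no entry of $f$ exceeding $k$ occurs before an entry of $f$ smaller than $k$, and (B) the subword of entries of $f$ exceeding $k$ is weakly increasing. This is immediate, since appending $k$ can only create an occurrence of $201$ or $210$ using the new last letter as its third term. I would then repackage (A) and (B) into an equivalent \emph{inversion form}: calling a pair $i<j$ with $f_i>f_j$ an inversion, the value $k$ is admissible iff every inversion $(i,j)$ satisfies $k\ge f_i$ or $k=f_j$. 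Writing $V(f)$ for the set of admissible $k$, the parameters of $f$ are $(p,q)=\bigl(|\{k\in V(f):k>f_N\}|,\,|\{k\in V(f):k\le f_N\}|\bigr)$, so there are exactly $|V(f)|=p+q$ children, settling the first assertion.

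Next I would pin down $V(e)$. For the admissible values above $e_n$ I claim $k>e_n$ is admissible iff $k\ge\max(e)$: if $e_n<k<\max(e)$, the entry $\max(e)$ precedes the final entry $e_n<k$ and violates (A), while $k\ge\max(e)$ leaves no entry exceeding $k$. Hence these form the top block $\{n-p+1,\dots,n\}$ of size $p$. For values at most $e_n$ the decisive point is the dichotomy whether $\max(e)=e_n$ or $\max(e)>e_n$. I would first show $e_n$ is always admissible (an earlier entry above $e_n$ together with a later entry below $e_n$ would give a $201$, and two decreasing entries above $e_n$ would give a $210$; both are forbidden), so $q\ge1$. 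Then, if $\max(e)>e_n$, the inversion $(i^\ast,n)$ with $e_{i^\ast}=\max(e)$ has bottom $e_n$, so (B) fails for every $k<e_n$; thus every admissible value satisfies $k\ge e_n$, forcing $q=1$ in this case.

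Finally I would compute the children's parameters by recording the inversions created when appending $k$, which are exactly the pairs $(i,n+1)$ with $e_i>k$, all with bottom $k$. For the $p$ children with $k>e_n$ (so $k\ge\max(e)$) no new inversion appears, the inversion set is unchanged, and $V(e\cdot k)=V(e)\cup\{n+1\}$; since $k$ lies in the top block, the split at $k$ gives $p'=n+1-k$ and $q'=p+q+1-p'$, producing $(p,q+1),(p-1,q+2),\dots,(1,q+p)$ as $k$ runs from $n-p+1$ to $n$. For the child $k=e_n$ the same no-new-inversion phenomenon (when $\max(e)=e_n$), or a direct check that the new inversions all have bottom $e_n$ (when $\max(e)>e_n$), leaves $V(e\cdot e_n)\cap\{0,\dots,e_n\}$ unchanged and adds only $n+1$ on top, giving $(p+1,q)$. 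Each of the remaining $q-1$ children occurs only when $\max(e)=e_n$, with $k<e_n$ admissible; there the new inversions have bottom $k$, so every $k'<k$ becomes inadmissible while $k'=k$ survives and the admissible values above $k$ become exactly $\{\max(e),\dots,n+1\}$, yielding $(p',q')=(p+2,1)$. The main obstacle is the bookkeeping in this last step: one must verify in each case precisely which parent-admissible values survive, vanish, or are created, and align the counts with $p$ and $q$ through $p=n-\max(e)+1$ when $\max(e)>e_n$ and $p=n-e_n$ when $\max(e)=e_n$. Keeping the dichotomy visible and working with the inversion criterion rather than with (A)/(B) directly is what makes these parameter computations uniform.
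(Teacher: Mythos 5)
Your proposal is correct and follows essentially the same route as the paper: enumerate the children by the appended value, observe that the admissible values above $e_n$ form the contiguous top block ending at $n$, and compute each child's parameters by tracking which admissible values survive, split according to whether the appended value exceeds $e_n$, equals the largest admissible value $\le e_n$, or is one of the remaining $q-1$ smaller ones. The paper's proof merely asserts the resulting parameters, whereas your explicit inversion-based admissibility criterion and the dichotomy $\max(e)=e_n$ versus $\max(e)>e_n$ supply the verification the paper leaves implicit.
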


\begin{proof}
Suppose that $k_1,k_2, \ldots,  k_p$   with $ k_1<k_2<\ldots<k_p=n$ are the integers such that   $(e_1,e_2,\ldots,e_n,k_i)\in \I_{n+1}(201,210)$ for $1\leq i\leq p$.
 Then the inversion sequence   $(e_1,e_2,\ldots,e_n,k_i) $  has  the parameters $(p+1-i,q+i)$ for $1\leq i\leq p$.  On the other hand, if  $e_n\geq l_1>l_2>\ldots>l_q$ are the integers such that   $(e_1,e_2,\ldots,e_n,l_i)\in \I_{n+1}(201,210)$ for $1\leq i\leq q$,
 then  the inversion sequence   $(e_1,e_2,\ldots,e_n,l_i) $  has  the parameters
 $$
 \begin{cases}
 (p+1,q),\qquad &\text{if $i=1$}\\
 (p+2,1), &\text{otherwise}.
 \end{cases}
 $$
This completes the proof of the lemma.
\end{proof}

\begin{remark}
It would be interesting to show that $(31245,32145,31254,32154)$-avoiding permutations admit certain same  succession rule  as that of $(201,210)$-avoiding inversion sequences. This will lead to another proof of Gao--Kitaev's conjecture.
\end{remark}

Let $F(u,v;t)=F(u,v):=\sum_{p,q\geq1} f_{p,q}(t)u^pv^q$, where $f_{p,q}(t)$ is the size generating function of the $(201,210)$-avoiding inversion  sequences with parameters $(p,q)$. We can turn the succession rule in Lemma~\ref{lem:201} into functional equation as follows.

\begin{proposition}
We have the following functional equation for $A(u,v)$:
\begin{equation}\label{fun:201}
F(u,v)=tuv+t\biggl(\frac{uF(u,v)-vF(v,v)}{1-v/u}+u^2v\biggl(\left.\dfrac{\partial F(u,v)}{\partial v}\right|_{v=1}-F(u,1)\biggr)\biggr).
\end{equation}
Equivalently, if we write $F(u,v)=\sum_{n\geq1} f_n(u,v)t^n$, then $f_1(u,v)=uv$ and for $n\geq2$,
\begin{equation}
\label{rec:201}
f_n(u,v)=\frac{uf_{n-1}(u,v)-vf_{n-1}(v,v)}{1-v/u}+u^2v\biggl(\left.\dfrac{\partial f_{n-1}(u,v)}{\partial v}\right|_{v=1}-f_{n-1}(u,1)\biggr).
\end{equation}
\end{proposition}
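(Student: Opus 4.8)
The plan is to read Lemma~\ref{lem:201} as a recursion on the weighted generating function and then translate each of the three offspring families into an operation on $F$. First I would fix the interpretation
$$
F(u,v)=\sum_{e}t^{|e|}u^{p(e)}v^{q(e)},
$$
where the sum ranges over all nonempty $(201,210)$-avoiding inversion sequences $e$, where $|e|$ is the length of $e$, and where $(p(e),q(e))$ are the parameters defined before the lemma. The unique length-one sequence $(0)$ has parameters $(1,1)$, contributing the term $tuv$; every longer sequence arises from a \emph{unique} parent of length one less by appending a final entry. Hence $F(u,v)=tuv+t\sum_{e}t^{|e|}C_e(u,v)$, where $C_e$ collects the monomials $u^{p'}v^{q'}$ over the children of $e$ prescribed by Lemma~\ref{lem:201}.

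Next I would evaluate $C_e$ for a parent of parameters $(p,q)$ by splitting the children into the three families. For the geometric family $(p+1-i,q+i)$ with $1\le i\le p$, a geometric summation gives
$$
\sum_{i=1}^{p}u^{p+1-i}v^{q+i}=u^{p}v^{q+1}\,\frac{1-(v/u)^{p}}{1-v/u}=\frac{u^{p}v^{q+1}-v^{p+q+1}}{1-v/u}.
$$
Summing over all parents and using $\sum_e t^{|e|}u^{p}v^{q+1}=vF(u,v)$ and $\sum_e t^{|e|}v^{p+q+1}=vF(v,v)$ turns this family into $\dfrac{v\bigl(F(u,v)-F(v,v)\bigr)}{1-v/u}$. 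The single child $(p+1,q)$ contributes $\sum_e t^{|e|}u^{p+1}v^{q}=uF(u,v)$, and the elementary identity $uF(u,v)+\dfrac{v(F(u,v)-F(v,v))}{1-v/u}=\dfrac{uF(u,v)-vF(v,v)}{1-v/u}$ (obtained by writing $u-v=(1-v/u)u$, after which the $vF(u,v)$ terms cancel) merges these two families into the first bracketed term of~\eqref{fun:201}.

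For the last family---the $q-1$ copies of $(p+2,1)$---the $v$-exponent of every child is frozen at $1$, so I would produce the multiplicity $q-1$ by differentiating in $v$ and specializing. Since $\left.\partial_v F(u,v)\right|_{v=1}=\sum_e t^{|e|}q\,u^{p}$ and $F(u,1)=\sum_e t^{|e|}u^{p}$, the difference is $\sum_e t^{|e|}(q-1)u^{p}$ (parents with $q=1$ correctly contribute $0$, matching zero copies); multiplying by $u^{2}v$ yields exactly $\sum_e t^{|e|}(q-1)u^{p+2}v$, the total contribution of this family. Assembling the three pieces gives~\eqref{fun:201}. Finally, extracting the coefficient of $t^{n}$---legitimate because every operation applied to $F$ is $\mathbb{C}(u,v)$-linear and involves no $t$, hence acts on each homogeneous piece $f_{n-1}$ separately---delivers $f_1(u,v)=uv$ and the recursion~\eqref{rec:201}. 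The only delicate point is the bookkeeping in the first two families: one must spot the telescoping identity that recombines the geometric sum with the middle child rather than trying to resum the geometric family in isolation, and one must use the differentiate-then-set-$v=1$ device to generate the factor $q-1$ for the third family; the remaining manipulations are routine substitutions.
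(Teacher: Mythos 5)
Your argument is correct and is essentially the paper's own proof: both build the generating tree from the succession rule of Lemma~\ref{lem:201}, sum the children's monomials over all parents, evaluate the geometric family in closed form, and produce the multiplicity $q-1$ via $\left.\partial_v F\right|_{v=1}-F(u,1)$, then extract coefficients of $t^n$. The only (cosmetic) difference is that the paper absorbs the middle child $(p+1,q)$ as the $i=0$ term of the geometric sum at the outset, whereas you resum the geometric family alone and recombine afterwards with the telescoping identity; the computations are otherwise identical.
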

\begin{proof}
We construct a generating tree for $(201,210)$-avoiding inversion sequences by representing each element as its parameters like this: the root is $(1,1)$ and the children of a vertex labelled $(p,q)$ are those that generated according to the succession rule in Lemma~\ref{lem:201}. Then the vertices in the $n$th level of this generating tree corresponding to the parameters of the sequences in $\I_n(201,210)$. In this generating tree, every vertex other than the root $(1,1)$ can be generated by a unique parent. Thus, we have
\begin{align*}
F(u,v)&=tuv+ t\sum_{p,q\geq1}f_{p,q}(t)\biggl(\sum_{i=1}^{p}u^{p+1-i}v^{q+i}+u^{p+1}v^q+(q-1)u^{p+2}v\biggr)\\
&=tuv+t\sum_{p,q\geq1}f_{p,q}(t)\biggl(\frac{u^{p+1}v^{q}-v^{p+q+1}}{1-v/u}+(q-1)u^{p+2}v\biggr)\\
&=tuv+t\biggl(\frac{uF(u,v)-vF(v,v)}{1-v/u}+u^2v\biggl(\left.\dfrac{\partial F(u,v)}{\partial v}\right|_{v=1}-F(u,1)\biggr)\biggr),
\end{align*}
which gives~\eqref{fun:201}.
\end{proof}

Although we could not solve~\eqref{fun:201}, recursion~\eqref{rec:201} can be applied to compute $f_n(u,v)$ and thus
$$
f_n(1,1)=|\I_n(201,210)|=|\SS_n(31245,32145,31254,32154)|.
$$
However, we will show in next section that the generating function for $(201,210)$-avoiding inversion sequences satisfies an algebraic equation of degree $2$, to our surprise.

\subsection{The generating function for $|\I_n(201,210)|$ is algebraic}
For $e\in\I_n$,  an entry $e_i$ of $e$ is {\em saturated} if $e_i=i-1$.
Introduce $A(t,q):=\sum_{n\geq1}t^n\sum_{e\in\I_n(201,210)}q^{\satu(e)}$, where $\satu(e)$ denotes  the number of saturated entries in $e$.  Let
$$
A(t):=A(t,1)=t+2t^2+6t^3+24t^4+116t^5+632t^6+3720t^7+\cdots
$$
be the generating function for $|\I_n(201,210)|$.

\begin{theorem}
The generating function $A(t)$ for $(201,210)$-avoiding inversion sequences satisfies the algebraic equation
\begin{equation}\label{alg:gen}
(2t^2-2t+1)A^2+(4t^2-3t)A+2t^2=0,
\end{equation}
whose formal power series solution is
$$
A(t)=\frac{3t-4t^2-t\sqrt{1-8t}}{4t^2-4t+2}.
$$
\end{theorem}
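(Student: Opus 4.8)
The plan is to work with the bivariate refinement $A(t,q)=\sum_{n\ge1}t^n\sum_{e\in\I_n(201,210)}q^{\satu(e)}$ introduced above, extract from it a single algebraic equation for $A(t)=A(t,1)$, and then solve that equation. The conceptual starting point is that $\satu$ is precisely the statistic recording the \emph{saturating} steps in the generation of $(201,210)$-avoiding inversion sequences: if one builds such a sequence by appending entries one at a time, then the only append that increases $\satu$ is the insertion of the maximal admissible last letter $e_{n+1}=n$, which is always legal and creates exactly one new saturated entry (the saturation status of every earlier position $i$ depends only on $e_i$ and so is untouched), whereas every other legal append leaves $\satu$ unchanged. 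In the succession-rule picture of Lemma~\ref{lem:201}, this maximal append is exactly the child whose first parameter collapses to $1$. Thus, up to the contribution of the initial entry, $\satu(e)$ counts how often the generating path passes through this distinguished child.

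First I would convert this observation into a functional equation for $A(t,q)$, obtained by decomposing a $(201,210)$-avoider according to its saturated skeleton, i.e.\ by carrying the marker $q$ through the generating tree and attaching a factor $q$ to each maximal child. This produces an equation of the same flavour as the functional equation~\eqref{fun:201} for $F(u,v)$, but the aim is that now a single catalytic variable should suffice: one keeps the catalytic variable recording the lone boundary datum that governs how many legal appends are available, while the $\satu$-marking $q$ absorbs the second degree of freedom that forced~\eqref{fun:201} to carry two catalytic variables. The target is an equation linear in the refined series, with a rational kernel and a right-hand side involving only specialisations of the series.

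Next I would apply the kernel method. The kernel is quadratic in the catalytic variable, so it admits a unique branch $x=x_0(t,q)$ that is a formal power series; substituting it cancels the unknown boundary series and leaves a closed relation, and setting $q=1$ collapses everything to an equation purely in $A=A(t,1)$. The endgame is then routine: one recognises this as the stated quadratic $(2t^2-2t+1)A^2+(4t^2-3t)A+2t^2=0$, whose discriminant factors cleanly as $t^2(1-8t)$, and selecting the branch with $A(0)=0$ (the minus sign, which reproduces $A(t)=t+2t^2+6t^3+24t^4+\cdots$) yields $A(t)=\dfrac{3t-4t^2-t\sqrt{1-8t}}{4t^2-4t+2}$.

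The hard part is the middle step: producing a functional equation for $A(t,q)$ that is amenable to the kernel method, since the two-catalytic-variable equation~\eqref{fun:201} coming directly from the succession rule resisted solution. The crux is to exploit the special role of the saturating appends in order to reduce to a single catalytic variable, and to verify that the resulting kernel is genuinely quadratic so that its unique power-series root can be substituted. Once such an equation is in hand, eliminating the catalytic variable, checking that the outcome is exactly the degree-$2$ relation above — including the factorisation $t^2(1-8t)$ of the discriminant and the correct branch choice — is a direct computation.
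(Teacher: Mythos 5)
Your proposal correctly identifies the refinement $A(t,q)$ with the statistic $\satu$, and your observation that appending the maximal admissible letter is always legal and is the unique append that increases $\satu$ is true. The endgame (the discriminant $t^2(1-8t)$ and the branch with $A(0)=0$) is also right. But the step you yourself flag as ``the hard part'' --- producing a functional equation for $A(t,q)$ with a single catalytic variable --- is exactly the step that is missing, and the route you sketch for it does not work as described. Carrying the marker $q$ through the generating tree of Lemma~\ref{lem:201} does not eliminate a catalytic variable: the succession rule genuinely requires both parameters $(p,q)$ to describe the children of a node (the saturating child has label $(1,q+p)$, so its second parameter depends on both parameters of the parent), and attaching a weight to the saturating children leaves you with the same two-variable obstruction that made \eqref{fun:201} intractable. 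Nothing in your argument shows that ``the lone boundary datum that governs how many legal appends are available'' is a single number, and in fact it is not.

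The paper's actual proof abandons the generating tree entirely at this point and instead decomposes $e\in\I_n(201,210)$ at its \emph{rightmost saturated entry} $e_\ell$. The three cases are: (a) $\ell=n$; (b1) $e_{\ell+1}=e_\ell$, where one deletes the duplicate; and, crucially, (b2) $e_{\ell+1}<e_\ell$, where the $(201,210)$-condition forces every later entry to equal $e_{\ell+1}$ or to be $\geq e_\ell$, so that $e$ splits into an \emph{independent concatenation} of a shorter avoider with non-saturated last entry and a normalized avoider with second entry $0$. It is this multiplicative splitting that produces an equation in which $q$ alone is the catalytic variable, with coefficients involving only $A(t,q)$ and $A(t)=A(t,1)$; the kernel method is then applied by choosing $q=q(t)$ to kill the coefficient of $A(t,q)$ (note the kernel here involves the unknown $A(t)$ itself, so one gets a system of two equations in $q$ and $A(t)$ and eliminates $q$, rather than substituting a power-series root of a polynomial kernel as in your sketch). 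Without the rightmost-saturated-entry decomposition, or some substitute for it, your argument does not reach a solvable equation, so the proof is incomplete at its central step.
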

\begin{proof}
Let $e=(e_1,e_2,\ldots,e_n)\in\I_n(201,210)$. Let $e_{\ell}$ be the rightmost  saturated entry of $e$, that is $\ell=\max\{i\in[n]: e_i=i-1\}$. We need to consider two cases:
\begin{itemize}
\item[(a)] If $\ell=n$, then
$$
\satu(e)=\satu(e_1,e_2,\ldots,e_{n-1})+1.
$$
\item[(b)] Otherwise, $1\leq \ell<n$. We need further to distinguish two cases:
\begin{itemize}
\item[(b1)] If $e_{\ell+1}=e_{\ell}=\ell-1$, then $(e_1,e_2,\ldots,e_{\ell},e_{\ell+2},\ldots,e_n)\in\I_{n-1}(201,210)$ and
$$
\satu(e)=\satu(e_1,e_2,\ldots,e_{\ell}).
$$
\item[(b2)] Otherwise, as $e_{\ell}$ is the rightmost saturated entry, $e_{\ell+1}<e_{\ell}=\ell-1$. In this case, since $e$ is $(201,210)$-avoiding and $e_{\ell}$ is the rightmost saturated  entry of $e$, either $e_i=e_{\ell+1}$ or $e_i\geq e_{\ell}$ for $i=\ell+2,\ldots,n$. It is straightforward to show that $e$ can be decomposed into
$$\tilde{e}:=(e_1,e_2,\ldots,e_{\ell-1},e_{\ell+1})\in\tilde{\I}_{\ell}(201,210)$$
and
$$\bar{e}:=(0,\bar e_{\ell+1},\bar e_{\ell+2},\ldots,\bar{e}_n)\in\bar{\I}_{n+1-\ell}(201,210),$$
where
$$
\bar e_i:=
\begin{cases}
0&\quad\text{if $e_i=e_{\ell+1}$,}\\
e_i-e_{\ell}+1&\quad\text{if $e_i\geq e_{\ell}$,}
\end{cases}
$$
 for $\ell+1\leq i\leq n$.
Here
$$\tilde{\I}_{\ell}(201,210):=\{e\in\I_{\ell}(201,210): e_{\ell}\text{ is not saturated}\}$$
and
$$
\bar{\I}_{\ell}(201,210):=\{e\in\I_{\ell}(201,210): e_{2}=0\}.
$$
This decomposition is reversible and satisfies the property
$$
\satu(e)=\satu(e_1,e_2,\ldots,e_{\ell-1},e_{\ell+1})+1.
$$
\end{itemize}
\end{itemize}

Clearly, counting the inversion sequences from case (a) (by length and the number of saturated entries) gives
$$
tq+tqA(t,q).
$$
Notice that
 $\sum_{n\geq2}t^n\sum_{e\in\widetilde{\I}_n(201,210)}q^{\satu(e)}=A(t,q)-tq-tqA(t,q)$ and there is an obvious one-to-one correspondence between $\bar{\I}_{\ell}(201,210)$ and $\I_{\ell}(201,210)\setminus\bar{\I}_{\ell}(201,210)$ for $\ell\geq2$. Thus, the generating function for the inversion sequences in case (b2) is
$$
q(A(t,q)-tq-tqA(t,q))\frac{A(t)-t}{2t}.
$$
Finally, as each inversion sequence in case (b1) is obtained from some $(201,210)$-avoiding inversion sequence by inserting one copy of a saturated entry immediately to the right of this saturated entry, if written $A(t,q)$ as $\sum_{k,n\geq1}a_{n,k}t^nq^k$, then the generating function for case  (b1) is
$$
t\sum_{k,n\geq1}a_{n,k}t^n(q+q^2+\cdots+q^k)=t\sum_{k,n\geq1}a_{n,k}t^n\biggl(\frac{q-q^{k+1}}{1-q}\biggr)=\frac{tq}{1-q}(A(t)-A(t,q)).
$$
Summing over all the above cases yields the following functional equation for $A(t,q)$:
\begin{equation*}
A(t,q)=tq+tqA(t,q)+q(A(t,q)-tq-tqA(t,q))\frac{A(t)-t}{2t}+\frac{tq}{1-q}(A(t)-A(t,q)),
\end{equation*}
which is equivalent to
\begin{equation}\label{ker:1}
\biggl(1+\frac{tq^2}{1-q}-\frac{q(1-tq)(A(t)-t)}{2t}\biggr)A(t,q)=tq+\frac{q^2(A(t)-t)}{2}+\frac{tqA(t)}{1-q}.
\end{equation}

We apply the kernel method to solve~\eqref{ker:1} and set the coefficient
$$1+\frac{tq^2}{1-q}-\frac{q(1-tq)(A(t)-t)}{2t}$$ of $A(t,q)$ to be zero, then the right-hand side of~\eqref{ker:1} vanishes. Therefore, $A(t)$ satisfies the system of equations
\begin{equation}\label{sys:ker1}
\begin{cases}
\,\,1+\frac{tq^2}{1-q}-\frac{q(1-tq)(A(t)-t)}{2t}=0,\\
\,\,tq+\frac{q^2(A(t)-t)}{2}+\frac{tqA(t)}{1-q}=0.
\end{cases}
\end{equation}
Eliminating $A(t)$ yields the algebraic equation for $q=q(t)$:
\begin{equation}\label{eq:q}
(t+1)q^2-3q+2=0.
\end{equation}
Equivalently, we have
$q^2=\frac{3q-2}{t+1}$.
Involving this equality, the second equation in~\eqref{sys:ker1} gives
$$
q=\frac{2((t^2+t-1)A+t^2+2t)}{(t-2)A+t^2+4t}.
$$
Substituting into~\eqref{eq:q} results in~\eqref{alg:gen} after simplification.
\end{proof}

By accident, we find that Albert, Linton and  Ru\v{s}kuc~\cite[Page~20]{ALR} have proved that the generating function for the class of $(41325, 51324, 42315, 52314)$-avoiding permutations  shares the same algebraic equation~\eqref{alg:gen} as $A(t)$.  Thus, together with Theorem~\ref{thm:fiveplus}, Propositions~\ref{prop:ChenL} and~\ref{refine:GK} we get the following five interpretations for A212198 in the OEIS~\cite{oeis}.

\begin{corollary}\label{cor:qua}
The following five pattern-avoiding classes
\begin{align*}
&\SS_n(45312,45321,54312,54321),\\
&\SS_n(31245,32145,31254,32154),\\
&\SS_n(31425,32415,31524,32514),\\
&\I_n(201,210)\quad\text{and}\\
&\SS_n(41325, 51324, 42315, 52314)
\end{align*}
all interpret the integer sequence A212198.
\end{corollary}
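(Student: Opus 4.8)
The plan is to chain together the bijections and generating-function identities already established, anchoring everything to the fact recorded in the introduction that A212198 is by definition the enumeration sequence of $\SS_n(45312,45321,54312,54321)$. First I would observe that Martinez and Savage's coding $\phi$, which by~\cite[Theorem~56]{ms} restricts to a bijection between $\SS_n(45312,45321,54312,54321)$ and $\I_n(201,210)$, immediately yields $|\SS_n(45312,45321,54312,54321)|=|\I_n(201,210)|$, so that $\I_n(201,210)$ interprets A212198 (this is also recorded by Proposition~\ref{prop:ChenL} upon setting all statistics equal to $1$).

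Next I would pin down the two ``middle'' classes. Setting $p=1$ in the refinement of Gao--Kitaev's conjecture (Proposition~\ref{refine:GK}) gives $|\SS_n(45312,45321,54312,54321)|=|\SS_n(31245,32145,31254,32154)|$, and Theorem~\ref{thm:fiveplus} supplies the bijection $\alpha$ witnessing $|\SS_n(31245,32145,31254,32154)|=|\SS_n(31425,32415,31524,32514)|$. Combined with the first step, this shows that all of the first four listed classes interpret A212198.

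The remaining, and only nontrivial, step concerns the fifth class $\SS_n(41325,51324,42315,52314)$, and this is where I expect the main obstacle to lie. Here I would invoke Albert, Linton and Ru\v{s}kuc~\cite[Page~20]{ALR}, who prove that its generating function satisfies the same algebraic equation~\eqref{alg:gen} as $A(t)=\sum_{n\geq1}|\I_n(201,210)|\,t^n$ (the latter being exactly our result proved in Section~\ref{sec:4}). The subtlety is that a quadratic over $\mathbb{Q}[[t]]$ may admit two formal power series solutions, so equality of the two series does \emph{not} follow merely from their satisfying the same equation. I would resolve this by comparing lowest-order coefficients: writing $A=a_1t+a_2t^2+\cdots$ and extracting the coefficient of $t^2$ in~\eqref{alg:gen} gives $a_1^2-3a_1+2=0$, so the two power series roots are distinguished precisely by their linear coefficients $a_1=1$ and $a_1=2$. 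Both counting sequences begin $1,2,6,24,\ldots$ (indeed $n!$ for $n\le4$, since every pattern involved has length $5$), hence both have $a_1=1$ and therefore select the same branch, namely $A(t)=\bigl(3t-4t^2-t\sqrt{1-8t}\bigr)/(4t^2-4t+2)$. Consequently $|\SS_n(41325,51324,42315,52314)|=|\I_n(201,210)|$ for all $n$, which completes the identification of all five classes with A212198 and hence proves Corollary~\ref{cor:qua}.
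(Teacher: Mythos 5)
Your proposal is correct and follows essentially the same route as the paper, which likewise obtains the corollary by chaining Theorem~\ref{thm:fiveplus}, Propositions~\ref{prop:ChenL} and~\ref{refine:GK} (equivalently, the Martinez--Savage bijection $\phi$) with the Albert--Linton--Ru\v{s}kuc computation showing that the fifth class satisfies the same algebraic equation~\eqref{alg:gen}. Your extra step of distinguishing the two formal power series roots of the quadratic (both of which are indeed power series, with linear coefficients $1$ and $2$) by matching the coefficient of $t$ is a worthwhile point of rigor that the paper passes over silently.
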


Note that the four classes of pattern-avoiding permutations above are not in bijection with each other under the fundamental symmetry operations: the inverse, the complement and the reversal of permutations.

\section{Final remarks, open problems}

Each quadruple of patterns in the four classes of permutations in Corollary~\ref{cor:qua} posses the same phenomenon: fix a letter and then exchange  respectively two pairs of the other letters. For instance, the quadruple $(31245,32145,31254,32154)$  can be obtained by  first fixing letter $3$ in  position $1$ and then exchanging the pairs of letters  $(1,2)$ (in positions $2$ and $3$) and $(4,5)$ (in positions $4$ and $5$) to get the four patterns. Using this principle, we find  the following conjectured 13 classes that are enumerated by the integer sequence A212198. 

\begin{conj}\label{conj:CL}
The following 13 classes are enumerated by  A212198:
\begin{align*}
&\SS_n(45312,45321,54312,54321),\\
&\SS_n(31245,32145,31254,32154),\\
&\SS_n(31425,32415,31524,32514),\\
&\SS_n(41325, 51324, 42315, 52314),\\
&\SS_n(13425,23415,13524,23514),\\
&\SS_n(13452,23451,13542,23541),\\
&\SS_n(24513,25413,24531,25431),\\
&\SS_n(13245,23145,13254,23154),\\
&\SS_n(32415,34215,32451,34251),\\
&\SS_n(21345,23145,23154,21354),\\
&\SS_n(24135,25134,25314,24315),\\
&\SS_n(42513,52413,42531,52431),\\
&\SS_n(42135,52134,52314,42315).
\end{align*}
\end{conj}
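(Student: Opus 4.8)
The plan is to reduce the thirteen quadruples to a short list of genuinely distinct representatives and then to attack each survivor with the two machines already built in this paper: the block-decomposition bijections in the style of $\varphi$ and $\alpha$, and the inversion-sequence codings in the style of Martinez--Savage's $\phi$ and the $b$-code. First I would sort the thirteen classes into orbits under the dihedral group generated by reverse, complement and inverse of permutations. Since each of these operations preserves cardinalities of avoidance classes, any quadruple lying in the orbit of one of the four classes settled in Corollary~\ref{cor:qua} is counted by A212198 for free; for instance $\SS_n(31425,32415,31524,32514)$ is inverse-conjugate to $\SS_n(24135,24153,42135,42153)$, already exploited in~\eqref{eq:bcode}. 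This step is mechanical, but it does not dispose of all nine new classes: several of them, e.g.\ $\SS_n(42135,52134,52314,42315)$, are not in any of these four orbits, as one checks by running through the eight symmetries.

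For each remaining representative the goal is to transport it to $\I_n(201,210)$. Each quadruple is obtained by fixing one letter and independently transposing two disjoint pairs of letters, so avoiding the quadruple is equivalent to avoiding a single pattern in which two pairs of positions carry unspecified internal order; this endows every class with a canonical ``floor'' block decomposition in the spirit of Propositions~\ref{prop:stru1} and~\ref{prop:stru2}. I would then design, case by case on the shape of the template (fixed letter at the front, interior, or the end; transposed pairs adjacent or interleaved), a permutation-to-inversion-sequence coding generalizing $\phi$, and prove that it restricts to a bijection onto $\I_n(201,210)$; the analogue of Lemma~\ref{lem:rlmin} would then pin down the matching statistics. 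Where a direct coding is awkward, the alternative is to construct a length-preserving bijection onto one of the four established classes by generalizing the recursive insertion scheme of $\alpha$ together with its structural lemmas.

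The hardest part will be the \emph{restriction property}: showing, for each new coding, that avoidance of the four length-five patterns corresponds \emph{exactly} to $\{201,210\}$-avoidance of the code. For the ``easy'' shapes, where the fixed letter sits at an end and the two transposed pairs are consecutive in value (as in the four base classes), the floor decomposition nests cleanly and the verification parallels Proposition~\ref{prop:alpha}. The real obstruction lies with those representatives that are not symmetry-equivalent to any base template and whose fixed letter is straddled by one of the transposed pairs, so that the floors fail to nest monotonically; examples are $\SS_n(32415,34215,32451,34251)$ and $\SS_n(42513,52413,42531,52431)$. For these neither the type~I nor the type~II decomposition applies verbatim, and the inductive pattern-avoidance bookkeeping (the analogue of Proposition~\ref{pro:desascA}) becomes delicate.

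A cleaner unifying route would be to exhibit, for each of the thirteen classes, a generating tree obeying the succession rule of Lemma~\ref{lem:201}; this would at once force the common algebraic generating function~\eqref{alg:gen} and bypass all the casework. However, as recorded in the Remark following Lemma~\ref{lem:201}, no such succession rule is known even for $\SS_n(31245,32145,31254,32154)$, so I expect this to be the genuine bottleneck, and I would treat it as a desirable but secondary target rather than the primary line of attack.
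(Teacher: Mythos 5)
There is a genuine gap here, and it is worth being precise about what kind of gap. What you have written is a research programme, not a proof: for the nine quadruples beyond the four already settled in Corollary~\ref{cor:qua}, you describe what one \emph{would} do (design a $\phi$-like coding onto $\I_n(201,210)$, or an $\alpha$-like recursive insertion onto one of the base classes, case by case on the template shape), but none of these codings is constructed, and you yourself flag the decisive step --- verifying that avoidance of each length-five quadruple corresponds exactly to $(201,210)$-avoidance of the code --- as the ``real obstruction'' for classes such as $\SS_n(32415,34215,32451,34251)$. A plan whose hardest cases are explicitly left open does not establish the statement. The symmetry-orbit reduction in your first step is sound in principle but buys little: the paper itself notes that the established classes are not related to one another by inverse, complement or reversal, so most of the nine new quadruples must be attacked from scratch, and your proposal never does so.

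For comparison: the paper does not prove this statement internally either. It records it as a conjecture, observes that the first four classes follow from Corollary~\ref{cor:qua} (i.e.\ from Theorem~\ref{thm:fiveplus}, Propositions~\ref{prop:ChenL} and~\ref{refine:GK}, and the Albert--Linton--Ru\v{s}kuc computation), and attributes the confirmation of all thirteen to Pantone's automated Combinatorial Exploration framework~\cite{Jay}, which finds combinatorial specifications and hence generating functions satisfying~\eqref{alg:gen} for each class. So the route that actually closes the problem is an algorithmic generating-function computation, orthogonal to the bijective programme you outline. Your proposed succession-rule unification would indeed be cleaner, but as the Remark after Lemma~\ref{lem:201} already warns, no such rule is known even for $\SS_n(31245,32145,31254,32154)$, so treating it as secondary does not rescue the primary line of attack, which remains unexecuted.
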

The first four classes of pattern-avoiding permutations above have been proved by Corollary~\ref{cor:qua}. Recently,  Pantone~\cite{Pan} has informed us that he managed to automatically find specifications for the above $13$ classes using their algorithmic framework for enumeration~\cite{Jay} and so Conjecture~\ref{conj:CL} was confirmed in full via generating functions. The details will appear in the website of PermPAL~\cite{PermPAL}. 

The main achievement of this paper is the construction of a bijection between 
$$\SS_n(31245,32145,31254,32154)\quad\text{and}\quad\SS_n(31425,32415,31524,32514)
$$
preserving  the quintuple of set-valued statistics $(\Ides,\Vlrmax,\Vlrmin,\Vrlmax,\Iar)$, which leads to a refinement of Gao--Kitaev's conjecture; see Theorem~\ref{thm:fiveplus} and Proposition~\ref{refine:GK}. It would be interesting to explore refinements of Conjecture~\ref{conj:CL} using classical permutation statistics from the bijective aspect.

\section*{Acknowledgement}
We thank J. Pantone for drawing our attention to their powerful algorithmic framework for enumeration~\cite{Jay} that could verify  Conjecture~\ref{conj:CL} completely  by generating functions.  This work was supported by
the National Science Foundation of China grants 12271301 and 11701420, and  the project of Qilu Young Scholars of Shandong University.


\begin{thebibliography}{99}

\bibitem{Jay} M.H. Albert, C. Bean, A. Claesson, \'{E}. Nadeau, J. Pantone and H. Ulfarsson, Combinatorial Exploration: An algorithmic framework for enumeration, \href{https://arxiv.org/abs/2202.07715v2}{arXiv:2202.07715}.

\bibitem{PermPAL} The Permutation Pattern Avoidance Library (PermPAL), \href{https://permpal.com}{https://permpal.com}.

\bibitem{ALR}M.H. Albert, S. Linton and N. Ru\v{s}kuc, The insertion encoding of permutations, Electron. J. Combin., {\bf12} (2005), \#R47.

\bibitem{AE}
J.S. Auli and S. Elizalde, Consecutive patterns in inversion sequences II: Avoiding patterns of relations, J. Integer Seq., {\bf22} (2019), Article 19.7.5.

\bibitem{BV}
J.L. Baril and V. Vajnovszki, A permutation code preserving a double Eulerian bistatistic, Discrete Appl.
Math., {\bf224} (2017),  9--15.

\bibitem{bbgr} N.R. Beaton, M. Bouvel, V. Guerrini and S. Rinaldi, Enumerating five families of pattern-avoiding inversion sequences; and introducing the powered Catalan numbers, Theoret. Comput. Sci., {\bf777} (2019), 69--92.

\bibitem{blo}J. Bloom, A refinement of Wilf-equivalence for patterns of length 4, J. Combin. Theory Ser. A, {\bf124} (2014), 166--177.

\bibitem{CN} J.N. Chen,  Block decomposition and statistics arising from permutation tableaux, European J. Combin., {\bf99} (2022), Article 103419.

\bibitem{CL} J.N. Chen and Z. Lin, Combinatorics of the symmetries of ascents in restricted inversion sequences, Adv. in Appl. Math., {\bf143} (2023), Article 102449.


\bibitem{gk} A.L.L. Gao and S. Kitaev, On partially ordered patterns of lengths $4$ and $5$ in permutations, Electron. J. Combin., {\bf26(3)} (2019),  \#P3.26.

\bibitem{HZ} J. Haglund and  P.B. Zhang,
Real-rootedness of variations of Eulerian polynomials, Adv. in Appl. Math., {\bf109} (2019), 38--54.

\bibitem{Kit2011}
	S. Kitaev, \textit{Patterns in Permutations and Words}, Springer, Heidelberg, 2011.

\bibitem{kr}  S. Kitaev, and J. Remmel,  Quadrant marked mesh patterns, J. Integer Seq., {\bf15} (2012),  Article 12.4.7, 29 pp.

\bibitem{FL2021} Z. Lin and S. Fu, On $\underline{12}0$-avoiding inversion and ascent sequences, European J. Combin., {\bf93} (2021), Article 103282, 12 pp.

\bibitem{kl} Z. Lin and D. Kim, A sextuple equidistribution arising in Pattern Avoidance, J. Combin. Theory Ser. A, {\bf155} (2018), 267--286.

\bibitem{Lin} Z. Lin, Patterns of relation triples in inversion and ascent sequences,  Theoret. Comput. Sci.,  {\bf804} (2020), 115--125.

\bibitem{Mac1915}
	P.A. MacMahon, \textit{Combinatory Analysis}, Cambridge University Press, London, 1915/16.

\bibitem{ms} M. Martinez and C.D. Savage, Patterns in Inversion Sequences II: Inversion Sequences Avoiding Triples of Relations,  J. Integer Seq., {\bf21} (2018), Article 18.2.2.

\bibitem{Pan}J. Pantone, Personal communication, December 2022.

\bibitem{sv} C.D. Savage and M. Visontai, The $\s$-Eulerian polynomials have only real roots, Trans. Amer. Math. Soc., {\bf367} (2015), 1441--1466.

\bibitem{oeis} O.E.I.S. Foundation Inc, The on-line encyclopedia of integer sequences, \href{http://oeis.org}{http://oeis.org}, 2022.

\end{thebibliography}
\end{document}